\numberwithin{equation}{section}
\newtheorem{prop}{Proposition}[section]
\newtheorem{thm}[prop]{Theorem}
\newtheorem{lem}[prop]{Lemma}
\newtheorem{coro}[prop]{Corollary}
\newtheorem{rem}[prop]{Remark}
\newtheorem{defi}[prop]{Definition}
\def\begeq{\begin{equation}}
\def\endeq{\end{equation}}
\begin{document}
\title[Equivariant $\mathbb R$-test configurations of polarized spherical varieties]
{Equivariant $\mathbb R$-test configurations of polarized spherical varieties}
\author[Yan Li and ZhenYe Li]{Yan Li$^{*1}$ and ZhenYe Li $^{*2}$}

\address{$^{*1}$School of Mathematics and Statistics, Beijing Institute of Technology, Beijing, 100081, China.}
\address{$^{*2}$College of Mathematics and Physics, Beijing University of Chemical Technology, Beijing, 100029, China.}
\email{liyan.kitai@yandex.ru,\ \ \ lizhenye@pku.edu.cn}

\thanks {$^{*1}$Partially supported by NSFC Grant 12101043 and the Beijing Institute of Technology Research Fund Program for Young Scholars.}
\thanks {$^{*2}$Partially supported by NSFC Grant 12001032.}

\subjclass[2000]{Primary: 14L30, 14M17; Secondary: 14D06, 53C30}

\keywords{Spherical variety, $\mathbb R$-test configurations, spherical datum.}

\begin{abstract}
Let $G$ be a connected, complex reductive Lie group and $G/H$ a spherical homogeneous space. Let $(X,L)$ be a polarized $G$-variety which is
a spherical embedding of $G/H$. In this paper we classify $G$-equivariant normal $\mathbb R$-test configurations of $(X,L)$ via combinatorial data. In
particular we classify the special ones, and prove a finiteness theorem of central fibres of $G$-equivariant special $\mathbb R$-test configurations.
Also, as an application we study the semistable degeneration problem of a $\mathbb Q$-Fano spherical variety.
\end{abstract}
\maketitle

\section{Introduction}
The notation of K-stability was first introduced by \cite{Tian-97} in terms of degenerations and re-formulated in \cite{Do} in terms of ($\mathbb Z$-)test configuration. The famous Yau-Tian-Donaldson conjecture confirms that the existence of K\"ahler-Einstein metrics on a Fano manifold is equivalent to the K-stability \cite{Tian-15,CDS}.
There are also variants of K-stability and Yau-Tian-Donaldson conjecture for other canonical metrics (see \cite{Do, WZZ, Han-Li-KRS}, etc). All these notations are phrased by sign of certain invariant of ($\mathbb Z$-)test configurations. Roughly speaking, a ($\mathbb Z$-)test configuration of a polarized variety $(X,L)$ is a flat family
$(\mathcal X,\mathcal L)$ over $\mathbb C$ with a $\mathbb C^*$-equivariant projection $\pi:\mathcal X\to\mathbb C$ so that the fibre $(\mathcal X_t,\mathcal L_{\mathcal X_t})\cong(X,L^{r_0})$ for a fixed exponent $r_0\in\mathbb N_+$ at any $t\not=0$. The fibre $\mathcal X_0$ at $0$ is called the
\emph{central fibre}. A test configuration is further called \emph{special} if its central fibre is normal. It is proved by \cite{Li-Xu-2014} that to test its K-stability of a $\mathbb Q$-Fano variety, it suffices to check only special test configurations. Concerning other canonical metrics on a $\mathbb Q$-Fano variety, such as K\"ahler-Ricci soliton, or more general, weighted soliton metric, \cite{WZZ, Han-Li-KRS} generalized this result to various of modified or weighted equivariant K-stability with respective to a reductive group action.

The theorems cited above lead to a beautiful result on toric Fano manifolds. It is known from \cite{Do} that any equivariant special test configuration of a toric manifold $X$ is a product test configuration. That is, the total space $\mathcal X$ is $\mathbb C^*$-equivariantly isomorphic to $X\times\mathbb C$. In fact, such a test configuration is induced from a toric vector field on $X$. In particular the central fibre $\mathcal X_0$ is always isomorphic to $X$. Thus a toric Fano manifold is equivariantly K-stable (hence admits K\"ahler-Einstein metrics by \cite{Tian-15, CDS}) if and only if its Futaki invariant vanishes. We remark that this result was first proved by \cite{Wang-Zhu} via a PDE approach. Also \cite{WZZ} proved that a toric Fano manifold is always equivariantly modified K-stable, hence it always admits a K\"ahler-Ricci soliton. In fact, these results are true for a more general class of horospherical Fano manifolds (cf. \cite[Theorem 5.3]{Del3}). In all these cases, to check the stability, it is crucial to use the fact that the central fibre of an equivariant special test configuration is always isomorphic to the original manifold.

On the other hand, the ($\mathbb Z$-)test configurations of a polarized variety can also be formulated in the language of filtrations on its Kodaira ring (cf. \cite{Witt-Ny}). This leads to a generalized notation of $\mathbb R$-test configurations (cf. \cite{Popov-1986, Han-Li, Blum-Liu-Xu-Zhuang} and references there in) which play an important role in proving algebraic uniqueness of the limit of (normalized) K\"ahler-Ricci flow on a $\mathbb Q$-Fano variety \cite{Blum-Liu-Xu-Zhuang, Chen-Wang-Sun, Han-Li}. In this paper, we work with the spherical varieties, which include toric varieties, horospherical varieties, etc. We will consider the equivariant normal $\mathbb R$-test configurations of general polarized spherical varieties, which need not to be $\mathbb Q$-Fano. Let $G$ be a connected, complex reductive group and $(X,L)$ be a polarized $G$-spherical variety. Denote by $\mathcal V$ its valuation cone (for precise meaning of the terminologies, see Section 2 below). We have the classification of equivariant special $\mathbb R$-test configurations,
\begin{thm}\label{main-thm-3}
The $G$-equivariant $\mathbb R$-test configurations of $(X,L)$ are in one-one correspondence with (real) vectors in the cone $\mathcal V_\mathbb R:=\mathcal V\otimes_\mathbb Z\mathbb R$.
\end{thm}


For a $G$-equivariant special $\mathbb R$-test configuration of $(X,L)$, we further compute the combinatorial data of its central fibre $\mathcal X_0$, which includes the spherical datum of the spherical homogeneous space $G/H_0$ embedded in $\mathcal X_0$ and the its coloured fan. According to a deep uniqueness result of Losev \cite{Losev} (see also \cite[Theorem 30.22]{Timashev-book}) and the Luna-Vust theory \cite{Luna-Vust} (see also \cite{Timashev-book}), the above data completely determines $\mathcal X_0$. Using an approximating result (see Proposition \ref{F-Lambda-appro} below) and a computation method of \cite{Ga-Ho-datum}, we get the following finiteness result;

\begin{thm}\label{main-thm-5}
Let $(X,L)$ be a polarized $G$-spherical variety. Up to $G$-equivariant isomorphism, the set of central fibres of all $G$-equivariant special $\mathbb R$-test configurations of $(X,L)$ is finite. More precisely, the possible central fibres are in one-one correspondence with the faces of $\mathcal V_\mathbb R$.
\end{thm}

A full statement of Theorem \ref{main-thm-5} is Theorem \ref{finiteness-X0} below. Note that for a horospherical variety (in particular, a toric variety) $X$, its valuation cone $\mathcal V$ is an affine space. Hence $\mathcal V_\mathbb R$ has itself as its only face. On the other hand, the origin $O\in\mathcal V$ defines a product (even trivial) test configuration with central fibre isomorphic to $X$. By Theorems \ref{main-thm-3}-\ref{main-thm-5}, the central fibre of any equivariant special $\mathbb R$-test configuration is isomorphic to $X$, as showed in \cite{Do,Del3}.

The proof of the above theorems actually depends on the classification of general equivariant $\mathbb R$-test configurations of polarized spherical varieties. In general the collection of even $\mathbb Z$-test configurations is extremely huge. However, it can stilled be expected that there is a simpler classification of equivariant test configurations on varieties with large symmetry. The classification of equivariant ($\mathbb Z$-)test configurations of polarized toric varieties was established by \cite{Do}.
Later \cite{AB2,AK} prove the classification theorem for polarized stable reductive varieties and then \cite{Del-2020-09} for polarized spherical varieties.
One key observation is that by adding a trivial fibre at infinity so that one compactifies the ($\mathbb Z$-)test configuration to a family $(\bar{\mathcal X},\bar{\mathcal L})$ over $\mathbb{CP}^1$, the total space $(\bar{\mathcal X},\bar{\mathcal L})$ is also a polarized toric (stable reductive, spherical, respectively) variety, provided $(\mathcal X,\mathcal L)$ is ample. In this way \cite{Do} characterized equivariant ($\mathbb Z$-)test configurations of a polarized toric variety $(X,L)$ via concave rational piecewise linear functions on the moment polytope of $(X,L)$. The result of stable reductive varieties or general spherical varieties is similar except a restriction that
the gradient of this piecewise linear function should lie in the valuation cone of the variety (cf. \cite{AB2,AK,Del-2020-09}).

In this paper, we will first classify the equivariant $\mathbb R$-test configurations of a polarized spherical variety. We used a totally algebraic approach since the total space of an $\mathbb R$-test configuration is usually very large and complicated.
Our way is to directly study the corresponding Rees algebra which contains the full information of the total space. We have the following classification,
\begin{thm}\label{main-thm-1}
Let $(X,L)$ be a polarized $G$-spherical variety with $\Delta_L$ its moment polytope. The $G$-equivariant $\mathbb R$-test configurations of $(X,L)$ are in one-one correspondence with the pairs of form $(f,\Gamma)$, where
\begin{itemize}
\item $f$ is a concave piecewise linear function on $\Delta_L$ so that $\nabla f\in\mathcal V$, and domains of linearity of $f$ consist of rational polytopes in $\Delta_L$;
\item $\Gamma$ is the group generated by the points of discontinuity of the $\mathbb R$-test configuration on $R.$
\end{itemize}
\end{thm}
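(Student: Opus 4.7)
The plan is to reformulate a $G$-equivariant $\mathbb R$-test configuration of $(X,L)$ as a $G$-equivariant multiplicative $\mathbb R$-filtration $\mathcal F^{\bullet}$ on the Kodaira ring $R=\bigoplus_{k\ge 0}H^0(X,L^k)$, and to analyze the associated Rees algebra rather than the total space $\mathcal X$, which in the $\mathbb R$-setting is too large to handle geometrically. Multiplicity-freeness of the spherical variety gives a canonical $G$-module decomposition $H^0(X,L^k)=\bigoplus_{\lambda\in k\Delta_L\cap M} V(\lambda)$, and Schur's lemma together with multiplicity one force $\mathcal F^t V(\lambda)\in\{0,V(\lambda)\}$. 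Consequently the whole filtration is encoded by the single real-valued function
\begin{equation*}
F_k(\lambda):=\sup\{t\in\mathbb R:V(\lambda)\subset\mathcal F^t R_k\}\qquad\text{on}\qquad\bigsqcup_k(k\Delta_L\cap M).
\end{equation*}

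Next, the multiplicative axiom $\mathcal F^s R_k\cdot\mathcal F^t R_{k'}\subset\mathcal F^{s+t}R_{k+k'}$, combined with the standard fact that inside a spherical section ring $V(\lambda)\cdot V(\mu)$ projects nontrivially onto $V(\lambda+\mu)$, yields the superadditivity $F_{k+k'}(\lambda+\mu)\ge F_k(\lambda)+F_{k'}(\mu)$. Setting $f(\lambda/k):=F_k(\lambda)/k$ and taking the concave envelope produces a concave piecewise linear function $f$ on $\Delta_L$. On any maximal domain of linearity $P\subset\Delta_L$, the gradient $\nabla f|_P\in N_{\mathbb R}$ defines a $G$-invariant $\mathbb R$-valuation on the $B$-semi-invariants of $\mathbb C(X)$, and by Luna--Vust theory this extends to a $G$-invariant valuation of $\mathbb C(X)$ if and only if $\nabla f|_P\in\mathcal V$. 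Rationality of the domains of linearity is forced by a finite-type argument: the filtration is of finite type over the group $\Gamma$ generated by its jumping values, so the slopes of $f$ can change only across rational hyperplanes.

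For the converse, given a pair $(f,\Gamma)$ satisfying the hypotheses, I would define
\begin{equation*}
\mathcal F^t R_k:=\bigoplus_{\lambda:\,k f(\lambda/k)\ge t}V(\lambda),
\end{equation*}
verify multiplicativity from concavity and superadditivity, and assemble the resulting Rees algebra into an $\mathbb R$-test configuration. The hypothesis $\nabla f\in\mathcal V$ is precisely what guarantees that the generic fibre is a spherical embedding of $G/H$ isomorphic to $(X,L)$, and $\Gamma$ is intrinsically recovered as the group generated by the jumping values.

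The principal obstacle I anticipate is the passage from rational to irrational slopes. Delcroix's treatment of $\mathbb Z$-test configurations uses the compactification trick over $\mathbb{CP}^1$, after which the total space itself becomes a polarized spherical variety and the classification reduces to standard Luna--Vust combinatorics; when the slopes of $f$ are irrational this global device is unavailable. I will therefore need to upgrade Brion's valuation-cone characterization from $\mathbb Q$- to $\mathbb R$-valuations, work entirely with Rees algebras in place of total spaces, and verify that the pair $(f,\Gamma)$ is independent of the chosen decomposition of $\Delta_L$ into maximal linear pieces. These points, together with the finite-generation argument forcing rationality of the domains of linearity, will carry most of the technical weight.
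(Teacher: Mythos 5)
Your overall strategy --- replace the total space by the Rees algebra, use multiplicity-freeness and Schur's lemma to encode a $G$-equivariant filtration by a single function $F_k(\lambda)$ on lattice points, get superadditivity from multiplicativity, and pass to a concave envelope --- is exactly the paper's approach. But the proposal has three concrete gaps.

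First, you never use or verify \emph{normality} of the Rees algebra, and the theorem is about normal $\mathbb R$-test configurations. In the forward direction, superadditivity only gives $F_k(\lambda)/k\le f(\lambda/k)$ for the concave envelope $f$; to get the reverse inequality (so that $f$ actually determines the filtration, rather than merely dominating it) the paper argues that if $t^{-s}e_\lambda$ were missing from ${\rm R}(\mathcal F)$, then a suitable power of the highest weight vector $e_\lambda$ lands in ${\rm R}(\mathcal F)$, hence $t^{-s}e_\lambda$ is integral over ${\rm R}(\mathcal F)$, hence lies in it \emph{because ${\rm R}(\mathcal F)$ is normal}. Without normality the equality between the filtration and the round-down of its concave envelope genuinely fails, so your concave-envelope step does not yet classify anything. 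Symmetrically, in the converse direction you must \emph{prove} that the Rees algebra you construct is normal (the paper's Step 2.2, an integral-closedness computation inside the larger normal ring $\bigoplus t^{-s}V_\lambda$ using the fact that $\sigma^q$ has a nonzero component in $V_{q\mu}$); "assemble the resulting Rees algebra" skips this, as well as the finite-generation check.

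Second, the necessity of $\nabla f\in\mathcal V$ is asserted via a valuation-extension heuristic but not derived. The actual mechanism is the multiplication rule for isotypic components: $V_{\lambda}\cdot V_{\mu}$ contains tail components $V_{\lambda+\mu-\beta}$, the admissible tails $\beta$ span exactly $-\mathcal V^\vee$ (this is the content of Proposition 3.1 and Corollary 3.2 of the paper, generalizing Timashev's multiplication rule and requiring care when $G/H$ is not quasiaffine), and multiplicativity plus normality then force the monotonicity $s^{(1)}_{\mu-\sum c_i\alpha_i}\ge s^{(1)}_{\mu}$, i.e.\ $\nabla f\in\mathcal V$. Your superadditivity along the Cartan direction alone does not see the tail directions at all. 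Third, your converse construction $\mathcal F^tR_k=\bigoplus_{kf(\lambda/k)\ge t}V(\lambda)$ discards $\Gamma$: it produces only the single filtration with reduced central fibre attached to $f$, whereas the theorem's correspondence is with \emph{pairs} $(f,\Gamma)$, realized by the $\Gamma$-round-down $s^{(k)}_\lambda=\sup\{s\in\Gamma\mid s\le kf(\lambda/k)\}$ for any finitely generated $\Gamma$ containing the group generated by the vertex values of $f$. Different choices of $\Gamma$ give genuinely different (non-reduced) test configurations with the same $f$, and your construction cannot produce them.
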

The precise statement of Theorem \ref{main-thm-1} is Theorem \ref{G-classify} below. The Theorem was first proved for polarized group compactifications in \cite{LL-arXiv-2021}.
In Section 3.2 we generalize the argument from \cite{LL-arXiv-2021} to show Theorem \ref{G-classify}.
One crucial step is to compute the tail vectors appeared in multiplication of isotypic components of the Kodaira ring $R$ (see Proposition \ref{H0(G/H-Lk)}).
This is a generalization of the multiplication rule of the coordinate ring of a reductive Lie group \cite[Proposition 4]{Timashev-Sbo}. We used a technique from \cite{Timashev-survey, Timashev-book} to overcome the
difficult when a spherical homogeneous space is not quasiaffine.

For equivariant $\mathbb R$-test configurations with reduced central fibre, the classification is much simpler,
\begin{thm}\label{main-thm-2}
The $G$-equivariant $\mathbb R$-test configurations of $(X,L)$ are in one-one correspondence with the concave piecewise linear functions on $\Delta_L$ so that its gradient lies in $\mathcal V$, and its domains of linearity consist of rational polytopes in $\Delta_L$.
\end{thm}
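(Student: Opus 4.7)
The plan is to deduce Theorem \ref{main-thm-2} as a direct corollary of Theorem \ref{main-thm-1} by showing that the reduced central fibre assumption forces the group $\Gamma$ to be uniquely determined by the piecewise linear function $f$. Once this is established, the bijection $(f,\Gamma)\leftrightarrow$ (equivariant $\mathbb{R}$-test configurations) collapses on the reduced-central-fibre side to a bijection $f\leftrightarrow$ (reduced-central-fibre ones), which is exactly the claim.

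First I would translate the reduced central fibre condition into the language of Rees algebras. The central fibre $\mathcal{X}_0$ is $\operatorname{Proj}$ of the associated graded ring $\operatorname{gr}_{\mathcal F} R$ of the Kodaira ring $R$ with respect to the $\mathbb{R}$-filtration $\mathcal F$ encoded by Theorem \ref{main-thm-1}. Reducedness of $\mathcal{X}_0$ amounts to $\operatorname{gr}_{\mathcal F} R$ having no nonzero nilpotents (after removing irrelevant contributions). Hence the task is to read off, directly from $(f,\Gamma)$, a combinatorial criterion for reducedness of the graded ring.

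Next, I would exploit the explicit description of $\mathcal F$ on each isotypic component of $R$ provided by Theorem \ref{main-thm-1}. Because $f$ is concave and piecewise linear with rational-polytope domains of linearity, and because the moment polytope together with the lattice scale $k$ selects finitely many values of $f$ at level $k$, the set of points at which the filtration actually jumps generates a canonical subgroup $\Gamma_f\subset\mathbb{R}$. If one chooses $\Gamma\supsetneq\Gamma_f$, then in the $\Gamma$-grading there are graded pieces that do not appear as genuine jumps, and the multiplication rule in $R$ (controlled by Proposition \ref{H0(G/H-Lk)}, specifically the tail vectors) produces elements in $\operatorname{gr}_{\mathcal F} R$ whose products vanish in higher graded pieces than the product of their degrees would suggest, i.e., nilpotents. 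So reducedness of $\mathcal X_0$ forces $\Gamma=\Gamma_f$.

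Finally, I would verify the converse: starting from an admissible $f$ alone, one sets $\Gamma:=\Gamma_f$ and applies Theorem \ref{main-thm-1} to obtain an equivariant $\mathbb{R}$-test configuration; the above analysis shows its associated graded has no nilpotents, so the central fibre is reduced. This gives a well-defined inverse and establishes the bijection. The main obstacle will be the precise identification of the canonical group $\Gamma_f$ and the rigorous demonstration that any strictly larger $\Gamma$ produces nilpotents; this relies on a careful bookkeeping of tail vectors in the multiplication rule of Proposition \ref{H0(G/H-Lk)} together with the convexity/rationality geometry of $f$ on $\Delta_L$, and is the technical core of the argument.
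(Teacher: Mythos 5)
Your high-level strategy --- deduce Theorem \ref{main-thm-2} from Theorem \ref{main-thm-1} by showing that reducedness of the central fibre pins the group of points of discontinuity down to a canonical $\Gamma_f$ determined by $f$ --- is in substance the paper's route to Theorem \ref{G-classify-reduced}. But the mechanism you propose for detecting non-reducedness, which you yourself identify as the technical core, is wrong in two ways. First, the direction is reversed: enlarging $\Gamma$ beyond the group $\Gamma_f$ generated by the values $kf(\tfrac{\lambda}{k})$ does \emph{not} create nilpotents, since by \eqref{s-k-G-general-inv} any $\Gamma\supseteq\Gamma_f$ yields $s_\lambda^{(k)}=kf(\tfrac{\lambda}{k})$ and hence literally the same filtration; graded pieces of a finer grading in which nothing jumps are simply zero and do not affect ${\rm Gr}(\mathcal F)$. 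Non-reducedness occurs when $\Gamma$ \emph{fails} to contain some value $kf(\tfrac{\lambda}{k})$, so that the rounding in \eqref{s-k-G-general-inv} gives $s_{\lambda_0}^{(l_0)}<l_0f(\tfrac{\lambda_0}{l_0})$ for some $\lambda_0$. Second, the nilpotents do not come from the tail vectors of Proposition \ref{mult-rule-isotypic}; they come from pure powers of a single highest weight vector $\sigma_0\in V_{\lambda_0}$. Indeed $\sigma_0^{\cdot k}$ lies in the single isotypic component $V_{k\lambda_0}$ with weight $ks_{\lambda_0}^{(l_0)}$, and since the deficit $k\bigl(l_0f(\tfrac{\lambda_0}{l_0})-s_{\lambda_0}^{(l_0)}\bigr)$ grows linearly while the gaps of $\Gamma(\mathcal F)$ are controlled by a finite generating set, for $k\gg1$ one finds $s_{k\lambda_0}^{(kl_0)}>ks_{\lambda_0}^{(l_0)}$, i.e.\ $\sigma_0^{\cdot k}=0$ in ${\rm Gr}(\mathcal F)$.

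The converse also needs an actual argument: when $s_\lambda^{(k)}=kf(\tfrac{\lambda}{k})$ holds identically you must still prove absence of nilpotents, and your appeal to ``the above analysis'' cannot do this since that analysis concerned the wrong phenomenon. The paper uses Lemma \ref{alg-lem}: any nonzero $\sigma\in V_{\lambda_0}$ has, after a group translation, nonzero highest-weight component, so $\sigma^{\cdot k}$ has a nonzero piece in $V_{k\lambda_0}$, which survives in ${\rm Gr}(\mathcal F)$ precisely because $s_{k\lambda_0}^{(kl_0)}=ks_{\lambda_0}^{(l_0)}$ --- again only the leading term of the multiplication matters. Finally, you would need to verify that $\Gamma_f$ is finitely generated so that $\mathcal F_f$ is genuinely an $\mathbb R$-test configuration; this holds because barycentric coordinates of lattice points in a rational domain of linearity have bounded denominators (cf.\ Remark \ref{RTC-reduced}). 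As written, the criterion ``reducedness forces $\Gamma=\Gamma_f$ because a larger $\Gamma$ produces nilpotents via tail vectors'' is both logically reversed and supported by the wrong mechanism, so the proof has a genuine gap at its central step.
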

Indeed, the points of discontinuity are totally determined by $f$ in this case. The precise statement of Theorem \ref{main-thm-2} is Theorem \ref{G-classify-reduced}. Then we conclude Theorem \ref{main-thm-3} from Theorem \ref{main-thm-2}, see Proposition \ref{R-TC-Lambda} below for details.

Back to the $\mathbb Q$-Fano cases, our results apply to the semistable degeneration problem, and we will prove
\begin{thm}\label{main-thm-4}
Let $X$ be a $\mathbb Q$-Fano spherical variety which is a completion of a spherical homogenous space $G/H$. Denote by $\Delta_+$ the moment polytope of $(X,-K_X)$. Then there is a unique $G$-equivariant special $\mathbb R$-test configuration $\mathcal F_{\Lambda_0}$
corresponding to $\Lambda_0\in\mathcal V_\mathbb R(G/H)$ which is the semistable degeneration of $X$. Moreover, if the weighted barycenter of $\Delta_+$,
$$\mathbf{b}(\Lambda_0):=\frac{\int_{\Delta_+}y_ie^{-\Lambda_0(y)}\pi dy}{\int_{\Delta_+}e^{-\Lambda_0(y)}\pi dy}\in\kappa_P+{\rm RelInt}{(-V_\mathbb R(G/H_0)^\vee)}.$$
Then the central fibre $\mathcal X_0$ is the limiting space of the normalized K\"ahler-Ricci flow on $X$. Here $\kappa_P$ is the $B$-weight of the canonical $B$-semiinvariant section of $-K_X$, and $\pi$ is a weight function on $\Delta_+$ determined by $X$.
\end{thm}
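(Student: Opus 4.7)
The plan is to combine the classification Theorem \ref{main-thm-3} with the variational characterization of the semistable degeneration from \cite{Han-Li,Blum-Liu-Xu-Zhuang}: the semistable degeneration of a $\mathbb Q$-Fano variety is the unique (up to isomorphism) minimizer of the $H$-invariant among $\mathbb R$-test configurations. A standard equivariance/averaging argument, using that both the $H$-invariant and the Kodaira ring are $G$-invariant, forces the minimizer to be $G$-equivariant, so Theorem \ref{main-thm-3} identifies it with a unique vector $\Lambda_0\in\mathcal V$.

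The first technical step is to reduce the $H$-invariant to an explicit functional on $\mathcal V$. Using the Kodaira-ring description of $(X,L)$ together with the classification of $\mathbb R$-test configurations with reduced central fibre (Theorem \ref{main-thm-2}), together with the standard asymptotic formula for the Duistermaat--Heckman measure of a polarized spherical variety, one expresses
\begin{equation*}
H(\Lambda) = -\log \int_{\Delta_+} e^{-\Lambda(y)}\pi(y)\, dy + 2\langle\Lambda,\kappa_P\rangle + C,
\end{equation*}
where $\pi(y)$ is the Duistermaat--Heckman polynomial. Strict convexity on $\mathcal V$ follows from Hölder's inequality applied to the first term; properness follows from the integrability of $e^{-\Lambda(y)}\pi(y)$ combined with the asymptotic geometry of $\Delta_+$ along the boundary of $\mathcal V$. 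These together yield the existence and uniqueness of the minimizer $\Lambda_0\in\mathcal V$.

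Next I would verify that $\mathcal F_{\Lambda_0}$ is \emph{special}. Using the combinatorial description of the central fibre's spherical datum from Proposition \ref{homo-sph-datum-G/H0}, one identifies $\mathcal X_0$ as a polarized spherical embedding of some $G/H_0$ with the same moment polytope $\Delta_+$ but with valuation cone $V(G/H_0)\supseteq\mathcal V$. That $\mathcal X_0$ is $\mathbb Q$-Fano then follows from tracking the anticanonical class through the degeneration (its $B$-weight $2\kappa_P$ is preserved), while Theorem \ref{main-thm-5} ensures that there are only finitely many candidates for $\mathcal X_0$, which together with the minimality of $\Lambda_0$ pins down a unique special test configuration.

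For the final statement, I would invoke the algebraic identification (Han--Li \cite{Han-Li}, building on Chen--Sun--Wang \cite{Chen-Wang-Sun}) of the normalized Kähler--Ricci flow limit with the modified K-polystable degeneration of $X$. The hypothesis $\mathbf b(\Lambda_0)\in\kappa_P+{\rm RelInt}(-V(G/H_0)^\vee)$ is precisely Delcroix's combinatorial criterion for modified K-polystability of the $\mathbb Q$-Fano spherical variety $\mathcal X_0$ with datum $G/H_0$, so no further polystable degeneration is needed, and $\mathcal X_0$ itself is the KRF limit. The principal difficulty is the change of valuation cone from $\mathcal V$ (governing test configurations of $X$) to $V(G/H_0)$ (governing K-polystability of $\mathcal X_0$): the minimizer $\Lambda_0$ is detected in $\mathcal V$, but its first-order optimality must be interpreted inside the larger cone $V(G/H_0)$ governing the spherical datum of the central fibre. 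This is where Proposition \ref{homo-sph-datum-G/H0} and Losev's uniqueness theorem enter, providing the bridge that turns the Euler--Lagrange equation for $\Lambda_0$ into the asserted barycenter condition.
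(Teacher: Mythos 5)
Your proposal follows essentially the same route as the paper: reduce to $G$-equivariant test configurations (the paper cites \cite[Proposition 2.9]{LL-arXiv-2021} for equivariance of the minimizer), use the classification to express ${\rm H^{NA}}$ as an explicit functional of $\Lambda\in\mathcal V$ via the moment polytope and the Duistermaat--Heckman density $\pi$, get a unique minimizer by convexity and properness, and then read off the Kähler--Ricci flow limit from the barycenter criterion on the central fibre, whose spherical datum is supplied by Proposition \ref{homo-sph-datum-G/H0}. One correction is needed in your displayed formula: with the paper's conventions one finds ${\rm H^{NA}}(\mathcal F_\Lambda)=\ln\bigl(\frac1V\int_{\Delta_+}e^{-\Lambda(y-\kappa_P)}\pi(y)\,dy\bigr)$, i.e.\ the logarithm of the Laplace-type integral enters with a \emph{plus} sign and the linear term is $\Lambda(\kappa_P)$, not $2\langle\Lambda,\kappa_P\rangle$; as you wrote it, with $-\log\int e^{-\Lambda}\pi\,dy$, Hölder's inequality would give \emph{concavity} of that term and the uniqueness-of-minimizer argument would collapse, so the sign matters. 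Two further remarks: specialness of $\mathcal F_{\Lambda_0}$ does not need a separate verification --- every $\mathcal F_\Lambda$ with $\Lambda\in\mathcal V$ is special by Corollary \ref{R-TC-Lambda}, and the minimizer of ${\rm H^{NA}}$ is special by the general results of Han--Li and Blum--Liu--Xu--Zhuang; and the paper additionally proves the intermediate inequality ${\rm H^{NA}}(\mathcal F)\geq{\rm H^{NA}}(\mathcal F_\Lambda)$ for an arbitrary equivariant normal $\mathbb R$-test configuration by taking $\Lambda$ to be a supporting slope of the concave function $f$ at $\kappa_P$, whereas you shortcut this by restricting at the outset to special configurations, which is legitimate given the cited uniqueness results.
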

Theorem \ref{main-thm-4} is a combination of Theorem \ref{H-inv-F-thm} and Proposition \ref{limit-KRF} below. For more details on this topic for spherical varieties, we refer to the readers \cite{Li-Wang}.
Also, we apply Theorem \ref{main-thm-1} to study the base change progress in Section 6.2, for which so far we did not find an explicit statement in the literatures (see Proposition \ref{base-change} below).

Combining with the arguments of Section 4, we can conclude the following result of the K\"ahler-Ricci limiting problem,
\begin{thm}\label{main-thm-6}
Let $X$ be a $\mathbb Q$-Fano spherical variety which is a completion of a spherical homogenous space $G/H$. Assume that $X$ does not admit any K\"ahler-Ricci soliton. Then the limiting space of the normalized K\"ahler-Ricci flow on $X$ can not be a completion of $G/H$.
\end{thm}

The paper is organized as following: In Section 2 we review preliminaries on $\mathbb R$-test configurations and spherical varieties; Section 3 is devoted to the classification Theorems \ref{main-thm-3}, \ref{main-thm-1} and \ref{main-thm-2}.
In Section 4 we compute the combinatory data of the central fibre of a $G$-equivariant special $\mathbb R$-test configuration and prove Theorem \ref{main-thm-5}.
In the Section 5 we give applications of the main Theorems, especially we apply our results to the semistable degeneration problem and prove Theorems \ref{main-thm-4}-\ref{main-thm-6}. In the Appendix we include a useful lemma.

\subsection*{Acknowledgement} The authors would like to sincerely thank Prof. Gang Tian for comments and suggestions to this work.

\section{Preliminaries}

\subsection{Filtrations and $\mathbb R$-test configurations}
In this section we recall some basic notations of filtrations and $\mathbb R$-test configurations. Up to the authors' knowledge, filtrations of homogeneous coordinate ring (Kodaira ring) of a projective $G$-varieties were first investigated in \cite{Popov-1986}, where a canonical horospherical degeneration of a $G$-variety was constructed.
We refer to the readers \cite{Popov-1986, Witt-Ny} and \cite[Section 2]{Han-Li} for further knowledge.

Let $X$ be a projective variety and $L$ a very ample line bundle over $X$ so that $|L|$ gives a Kodaira embedding of $M$ into some projective space $\mathbb P^N$. The Kodaira ring of $M$ is
$$R(X,L) =\oplus_{k\in\mathbb N}R_k,~\text{where}~R_k={\rm H}^0(X,L^k).$$
\begin{defi}\label{filtrantion-def}
A filtration $\mathcal F$ of $R$ is a family of subspaces $\{\mathcal F^sR_k\}_{s\in\mathbb R,k\in\mathbb N}$ of $R(M,L)=\oplus_{k\in\mathbb N}R_k$ such that
\begin{itemize}
\item[(1)] $\mathcal F$ is decreasing: $\mathcal F^{s_1}R_k\subset \mathcal F^{s_2}R_k,~\forall s_1\geq s_2\text{ and }k\in\mathbb N;$
\item[(2)] $\mathcal F$ is left-continuous: $\mathcal F^sR_k=\cap_{t<s}\mathcal F^tR_k,~\forall k\in\mathbb N;$
\item[(3)] $\mathcal F$ is linearly bounded: There are constants $c_\pm\in\mathbb Z$ such that for each $k\in\mathbb N$, such that $$\mathcal F^sR_k=0,~\forall s>c_+k\text{ and }\mathcal F^sR_k=R_k,~\forall s<c_-k;$$
\item[(4)] $\mathcal F$ is multiplicative: $\mathcal F^{s_1}R_{k_1}\cdot\mathcal F^{s_2}R_{k_2}\subset\mathcal F^{s_1+s_2}R_{k_1+k_2},$ for all $k_1,k_2\in\mathbb N$ and $s_1,s_2\in\mathbb R.$
\end{itemize}
\end{defi}
Let $\Gamma(\mathcal F,k)$ be the set of values of $s$ where the filtration of $R_k$ is discontinuous, and $\Gamma(\mathcal F)$ the Abelian group generated by the union of all $\Gamma(\mathcal F,k)$'s. Also, set
\begin{align}\label{group-discon}
\Gamma_+(\mathcal F):=\cup_k(\Gamma(\mathcal F,k)-\min\Gamma(\mathcal F,k)),
\end{align}
and $\hat\Gamma(\mathcal F)$ the Abelian group generated by $\Gamma_+(\mathcal F)$. Then $\hat\Gamma(\mathcal F)\leq\Gamma(\mathcal F)$. We associate to each filtration $\mathcal F$ two graded algebra,
\begin{defi}\label{def-rees-gr}
\begin{itemize}
\item[(1)]
The Rees algebra,
\begin{align}\label{rees-def}
{\rm R}(\mathcal F):=\oplus_{k\in\mathbb N}\oplus_{s\in\Gamma(\mathcal F)}t^{-s}\mathcal F^sR_k,
\end{align}
and
\item[(2)]
The associated graded ring of $\mathcal F$,
\begin{align}\label{GrF-def}{\rm Gr}
(\mathcal F):=\oplus_{k\in\mathbb N}\oplus_{s\in\Gamma(\mathcal F)}(\mathcal F^sR_k/\mathcal F^{>s}R_k).
\end{align}
\end{itemize}
\end{defi}

There is an important class of filtrations, called the $\mathbb R$-test configurations, which can be considered as a generalization of the usual ($\mathbb Z$-)test configurations introduced in \cite{Do}.

\begin{defi}
Set $\Gamma_{\leq0}(\mathcal F):=\Gamma(\mathcal F)\cap\mathbb R_{\leq0}$. When ${\rm R}(\mathcal F)$ is finitely generated over $\oplus_{s\in\Gamma_{\leq0}(\mathcal F)}t^{-s}R_0$, we say that $\mathcal F$ is an $\mathbb R$-test configuration of $(X,L)$. In this case, ${\rm Gr}(\mathcal F)$ is also finitely generated. The projective scheme
$$\mathcal X_0:={\rm Proj}({\rm Gr}(\mathcal F))$$
is called the central fibre of $\mathcal F$.
\end{defi}
Set $\mathcal B:={\rm Spec}(\mathbb C[\Gamma(\mathcal F)\cap\mathbb R_{\geq0}])$. According to \cite[Section 2]{Teissier}, the total space $\mathcal X$ of $\mathcal F$ is the projective scheme
\begin{align}\label{total-space-ring}
\mathcal X={\rm Proj}_{\mathcal B}({\rm R}(\mathcal F)),
\end{align}
and ${\rm Gr}(\mathcal F)$ is the quotient of ${\rm R}(\mathcal F)$ by the ideal generated by $\Gamma_{<0}(\mathcal F):=\Gamma(\mathcal F)\cap\mathbb R_{<0}$. When $\mathcal F$ is an $\mathbb R$-test configuration, the Abelian group $\hat\Gamma(\mathcal F)$ generated by \eqref{group-discon} also has finite rank. Denote its rank by $\hat r_\mathcal F$.
The $\Gamma(\mathcal F)$-grading of ${\rm Gr}(\mathcal F)$ corresponds to a (possibly real) holomorphic vector field $\Lambda$ on $\mathcal X_0$, which generates a rank $\hat r_\mathcal F$ torus (denote by $\mathbb T$) action. In fact, this $\mathbb T$-action is the restriction of a rank $\hat r_\mathcal F$-torus (still denoted by $\mathbb T$) action from ${\rm PSL}_N(\mathbb C)$, the group of holomorphic actions on $\mathbb P^N$. Moreover, the total space $\mathcal X$ is a completion of $\mathbb T\cdot X$ in $\mathbb P^N$. The relation \eqref{total-space-ring} gives the embedding of $\mathcal X$ via the restriction of $\mathcal O_{\mathbb P^N}(1)$ on $\mathcal X$. Note that we take the convention that the $\exp{(t\Lambda)}$-action has weight $t^{-s}$ on the $(\mathcal F^sR_k/\mathcal F^{>s}R_k)$-piece in \eqref{GrF-def}.

\begin{rem}\label{F-normalized}
By finite generation, $\mathcal F$ (as a filtration on $R(X,L^{k_0})$) is generated by $\mathcal FR_{k_0}$ for some $k_0\in\mathbb N_+$. By a shifting of $\mathcal F$, we can always normalize $\min\Gamma(\mathcal F,k_0)=0$. Then $\Gamma (\mathcal F)$ contains all points of discontinuity of $\mathcal F$. In the following, all $\mathbb R$-test configurations are assumed to be normalized in this way.
\end{rem}

\begin{rem}
When ${\rm rank}(\Gamma(\mathcal F))=1$, up to a rescaling we can embed $\Gamma(\mathcal F)$ in $\mathbb Z$. The above $\mathbb R$-test configuration is simply the usual ($\mathbb Z$-)test configuration introduced in \cite{Do}. For detailed correspondence, see \cite{Witt-Ny} and \cite[Section 2.2]{Han-Li}.
\end{rem}

There is an important subclass of normal $\mathbb R$-test configurations,
\begin{defi}
An $\mathbb R$-test configuration $\mathcal F$ is called special if the (scheme-theoretic) central fibre $\mathcal X_0$ is a normal variety.
\end{defi}

\subsubsection{Equivariant $\mathbb R$-test configurations}
Let $(M,L)$ be a polarized variety with a group $\mathfrak G$-action. Then $\mathfrak G$-acts on its Kodaira ring. Let $\mathcal F$ be a filtration on $R$. Define the action of $\mathfrak G$ on $\mathcal F$ by
\begin{align}\label{group-act-F}
(\sigma\cdot \mathcal F)^sR_k:=\sigma(\mathcal F^sR_k),~\forall s\in\mathbb R,k\in\mathbb N,
\end{align}
for any $\sigma\in \mathfrak G$. Clearly $\sigma\cdot \mathcal F$ is also a filtration on $R$, and it an $\mathbb R$-test configuration if and only if $\mathcal F$ is. As a generalization of equivariant $\mathbb Z$-test configurations, we define

\begin{defi}
A filtration $\mathcal F$ is called {$\mathfrak G$-equivariant} if $\mathcal F^sR_k$ is a $\mathfrak G$-invariant space of $R_k$ for any $s\in\mathbb R$ and $k\in\mathbb N$. That is
\begin{align}\label{group-eq-F}
\sigma(\mathcal F^sR_k)=\mathcal F^sR_k,~\forall s\in\mathbb R,k\in\mathbb N.
\end{align}
\end{defi}

As in the case of equivariant $\mathbb Z$-test configurations, it is clear that when $\mathcal F$ is a $\mathfrak G$-equivariant $\mathbb R$-test configuration, the projection from $\mathcal X$ to the base $\mathcal B$ is $\mathfrak G$-equivariant. Note that here $\mathfrak G$ acts on $\mathcal B$ trivially.

\subsubsection{Filtrations and semi-valuations}
Let $\mathcal F$ be a filtration on $R$. For any $\sigma_k\in R_k,~k\in\mathbb N_+$, set
\begin{align*}
 {\mathfrak v}_\mathcal F(\sigma_k):=\max\{s|\sigma_k\in\mathcal F^sR_k\},
\end{align*}
and for any $\sigma:=\sum_{k\in\mathbb N_+}\sigma_k$ with $(0\not=)\sigma_k\in R_k$, set
\begin{align*}
{\mathfrak v}_\mathcal F(\sigma):=\min\{\mathfrak v_\mathcal F(\sigma_k)|0\not=\sigma_k\in R_k\}.
\end{align*}
Then ${\mathfrak v}_\mathcal F(\cdot)$ defines a semi-valuation on $R$ (cf. \cite[Section 2.2]{Han-Li}). Conversely, given any valuation ${\mathfrak v}$ with finite log-discrepancy, there is a filtration $\mathcal F_{({\mathfrak v})}$ on $R$ by (cf. \cite[Example 2.2]{Han-Li}) satisfying the above two relations,
$$\mathcal F_{({\mathfrak v})}^sR_k:=\{\sigma\in R_k|{\mathfrak v}(\sigma)\geq s\}.$$

It is known that when ${\rm Gr}(\mathcal F)$ is an integral domain, ${\mathfrak v}_\mathcal F$ is a valuation and $\mathcal F=\mathcal F_{( {\mathfrak v}_\mathcal F)}$, up to a shifting. In particular, this applies to special $\mathbb R$-test configurations (cf. \cite[Lemma 2.11]{Han-Li}).

\subsection{Approximation of an $\mathbb R$-test configuration}
In the following, we will prove a general approximating result of $\mathbb R$-test configurations, which will be used later in the study of central fibre.

\begin{prop}\label{RTC-appr}
Let $(X,L)$ be a polarized projective variety and $\mathcal F$ an $\mathbb R$-test configuration of $(X,L)$. Then there is a $\mathbb Z$-test configurations $\mathcal F'$ of $(X,L)$ which has the same central fibre with $\mathcal F$. Moreover, if $(X,L)$ admits a reductive group $\mathfrak G$-action and $\mathcal F$ is $\mathbf G$-equivariant, then we can further choose $\mathcal F'$ to be $\mathbf G$-equivariant.
\end{prop}

\begin{proof}
Suppose that $L$ is very ample and $M$ is embedded in $\mathbb {CP}^N=\mathbb P({\rm H}^0(X,L))$. Then $R(X,L)$ is generated by $R_1$. Suppose that $\mathcal F$ is an $\mathbb R$-test configuration of $(X,L)$. Then geometrically, there is a (real) holomorphic vector field $\mathbf{\Lambda}$ which generates a rank $r_\mathcal F$-torus $\mathbf{T}\subset{\rm PSL}_{N}(\mathbb C)$ so that $\mathcal F$ is generated by the following filtration on $R_{1}:={\rm H}^0(X,L)$,
\begin{align}\label{T-decomp-R1}
\mathcal F^sR_{1}=\oplus_{\eta\in{\rm Lie}^*(\mathbf{T}),\mathbf{\Lambda}(\eta)\geq s}R_{1,\eta},
\end{align}
where $\mathbf{T}$ acts on the $R_{1,\eta}$-piece of the linear space $R_{1}$ with weight $\eta\in{\rm Lie}^*(\mathbf{T})$ (cf. \cite[Geometric $\mathbb R$-TC I]{Han-Li}). 

According to \cite[Section 3.1]{Chen-Wang-Sun}, there is a surjective map
$$\mathbf{\Phi}: \oplus_{k\in\mathbb N}{\rm Sym}^kR_1\to{\rm Gr}(\mathcal F),$$
whose kernel is the initial ideal $\bar I$ of the saturated ideal $I$ defining $X$. That is,  the ideal
generated by the initial terms of elements in $I$ with respect to the $s$-grading induced by $\mathcal F$. Then $\bar I$ is finitely generated and a set of generators is contained in $\oplus_{0\leq j\leq k_0}{\rm Sym}^jR_1$.

Consider the $\mathbf T$-action on $R_1$. Choose a basis $\{e_1,...,e_{N+1}\}$ of $R_1$ that is compatible with the weight decomposition \eqref{T-decomp-R1}. Then each $e_i\in R_{1,\eta_i}$, has $s$-grading $s_i=\mathbf{\Lambda}(\eta)$, and any $l$-product $\prod_{j=1}^le_{i_j}$ has $s$-grading $s_i=\sum_{j=1}^l\mathbf{\Lambda}(\eta_{i_j})$. Hence $\mathcal F$ induces an $s$-grading on $\oplus_{k\in\mathbb N}{\rm Sym}^kR_1$. And $\bar I$ is the initial ideal of $I$ under this grading. 
We claim that there is a cone $\mathfrak Z$ in ${\rm Lie}(\mathbf T)$ so that for any $\mathbf \Lambda'\in\mathfrak Z$, the $\mathbb R$-test configuration $\mathcal F'$ generated by
$$\mathcal F'^sR_{1}=\oplus_{\eta\in{\rm Lie}^*(\mathbf{T}),\mathbf{\Lambda'}(\eta)\geq s}R_{1,\eta}$$
induces a initial ideal $\bar I'=\bar I$, and consequently $\mathcal F'$ has the same central fibre with $\mathcal F$. Indeed, it suffices to solve the following inequality for $\mathbf\Lambda'\in{\rm Lie}(\mathbf T)$ so that the $s$-grading of $\mathcal F'$ on each ${\rm Sym}^jR_1,~j=1,...,k_0$ has the same order with that of $\mathcal F$. Denote
$$\mathfrak Y_1:=\{\eta\in{\rm Lie}^*(\mathbf T)|R_{1,\eta}\not=\{O\}\},$$
and for each $k\in\mathbb N_+$,
$$\mathfrak Y_k:=\{\sum_{i=1}^k\eta_i|\eta_i\in\mathfrak Y_1\}.$$
Then it suffices to require,
\begin{align}
\mathbf \Lambda'(\eta_{(j)1}-\eta_{(j)2})=&0,~\forall\eta_{(j)1},\eta_{(j)2} \in\mathfrak Y_j~\text{so that}~\mathbf \Lambda(\eta_{(j)1}-\eta_{(j)2})=0,\label{eq-Lambda-1}\\
\mathbf \Lambda'(\eta_{(j)1}-\eta_{(j)2})>&0,~\forall\eta_{(j)1},\eta_{(j)2} \in\mathfrak Y_j~\text{so that}~\mathbf \Lambda(\eta_{(j)1}-\eta_{(j)2})>0,\label{eq-Lambda-2}
\end{align}
for $j=1,...,k_0$. Note that the coefficients $\eta_{(j)1},\eta_{(j)2}$ in each (in-)equality are rational.
Hence the system \eqref{eq-Lambda-1}-\eqref{eq-Lambda-2} defines a rational convex cone $\mathfrak Z$ in ${\rm Lie}(\mathbf T)$ containing $\mathbf \Lambda$.
In fact, $\mathbf \Lambda$ is a generic element in $\mathfrak Z$.

Choose a rational element $\Lambda'\in\mathfrak Z$ which generates a one dimensional torus $(\mathbb C^*\cong)\mathbf T'\subset\mathbf T$. Consider the $\mathbb Z$-test configuration $\mathcal F'$ induced by $\mathbf T'$ and the corresponding initial ideal $\bar I'$ of $I$ induced by $I$. Clearly, $\bar I\cap R_j=\bar I'\cap R_j$ for $j=1,...,k_0$. Hence the generators of $\bar I$ is contained in $\bar I'$ and $\bar I\subset\bar I'$. On the other hand, for every $k\in\mathbb N$, the $k$-th pieces in ${\rm Gr}(\mathcal F)$ and ${\rm Gr}(\mathcal F')$ are isomorphic as linear spaces (regardless the ring structure). Hence $\bar I\cong\bar I'$ and the two $\mathbb R$-test configurations have the isomorphic central fibre.


Finally if $(X,L)$ admits a reductive group $\mathbf G$-action and $\mathcal F$ is $\mathbf G$-equivariant, we can lift the $\mathbf G$-action on $\mathbb{CP}^N$. That is, there is a morphism from $\mathbf G$ to ${\rm PSL}_N(\mathbb C)$ so that its restriction on $X$ gives the original $\mathbf G$-action on $X$. Since $\mathcal F$ is $\mathbf G$-equivariant, $\mathbf T$ commutes with $\mathbf G$. By our construction, each $\mathcal F'$ is also $\mathbf G$-equivariant.

\end{proof}

\subsection{Spherical varieties}
In the following we overview the theory of spherical varieties. The origin goes back to \cite{Luna-Vust}. We use \cite{Timashev-survey, Timashev-book} as main references.
\begin{defi}
Let $G$ be a connected, complex reductive group. A closed subgroup $H\subset G$ is called a {spherical subgroup of $G$} if there is a Borel subgroup $B$ of $G$ acts on $G/ H$ with an open orbit.
In this case $G/H$ is called a {spherical homogeneous space}. A {spherical embedding} of $G/H$ (or simply a spherical variety) is a normal variety $X$ equipped with a $G$-action so that there is an open dense $G$-orbit isomorphic to $G/H$.
\end{defi}

\subsubsection{Homogenous spherical datum}
Let $H$ be a spherical subgroup of $G$ with respect to the Borel subgroup $B$. The action of $G$ on the function field $\mathbb C(G/H)$ of $G/ H$ is given by
$$(g^*f)(x):=f(g^{-1}\cdot x),~\forall g\in G, x\in G/H~\text{and}~f\in\mathbb C(G/H).$$
A function $f(\not=0)\in\mathbb C(G/H)$ is called \emph{$B$-semiinvariant} if there is a character of $B$, denoted by $\chi$, so that $b^*f=\chi(b)f$ for any $b\in B$.
By \cite[Section 25.1]{Timashev-book}, $\mathbb C(G/H)^B=\mathbb C$. Two $B$-semiinvariant functions associated to a same character can differ from each other only by multiplying a non-zero constant.

Let $\mathfrak M(G/H)$ be the lattice of $  B$-characters which admits a correspondent $B$-semi-invariant functions, and $\mathfrak N(  G/  H)={\rm Hom}_\mathbb Z(\mathfrak M(  G/  H),\mathbb Z)$ its $\mathbb Z$-dual. There is a map $\varrho$ which maps a valuation $\nu$ of $\mathbb C(  G/  H)$ to an element $\varrho(\nu)$ in $\mathfrak N_\mathbb Q(G/H)=\mathfrak N(G/H)\otimes_\mathbb Z\mathbb Q$ by
\begin{align}\label{varrho-def}
\varrho(\nu)(\chi)=\nu(f),
\end{align}
where $f$ is a $B$-semiinvariant functions associated to $\chi$. Again, this is well-defined since $G/H$ is spherical with respect to $B$.
It is a fundamental result that $\varrho$ is injective on $  G$-invariant valuations and the image forms a convex cone (over $\mathbb Q$) $\mathcal V(  G/  H)$ in $\mathfrak N_\mathbb  Q(  G/  H)$, called the \emph{valuation cone} of $  G/  H$ (cf. \cite[Section 19]{Timashev-book}).
 Moreover, $\mathcal V(  G/  H)$ is a solid cosimplicial cone which is a (closed) fundamental chamber of a certain crystallographic reflection group, called the {\it little Weyl group} (denoted by $W^{G/H}$, cf. \cite[Sections 22]{Timashev-book}). In fact, $W^{G/H}$ is the Weyl group of the \emph{spherical root system} of $  G/  H$ (cf. \cite[Section 30]{Timashev-book}).
The spherical roots are defined to be the primitive generators of edges of $(-\mathcal V(G/H))^\vee$. The set of spherical roots is denoted by $\Pi_{G/H}$.

A $  B$-stable prime divisors in $  G/  H$ is called a \emph{colour}. Denote by $\mathcal D(  G/  H)$ the set of colours. A colour $D\in\mathcal D(  G/  H)$ also defines a valuation on $  G/  H$. However, the restriction of $\varrho$ on $\mathcal D(  G/  H)$ is usually non-injective.

Now we briefly introduce the homogeneous spherical datum, which by a deep result in \cite{Losev} (cf. \cite[Theorem 30.22]{Timashev-book}) characterizes the spherical homogeneous space. Let $P_\alpha$ be the minimal standard parabolic subgroup of $G$ containing $B$ corresponding to the simple root $\alpha\in\Pi_G$. Set $$\Pi_{G/H}^p:=\{\alpha\in\Pi_G|\mathcal D(G/H;\alpha)=\emptyset\}.$$
Then for every $\alpha\in\Pi_G\setminus\Pi_{G/H}$, it moves at least one colour. Set
$$\mathcal D(G/H;\alpha):=\{D\in\mathcal D(G/H)|D~\text{is not}~P_\alpha\text{-stable}\}.$$
Then $\mathcal D(G/H)=\cup_{\alpha\in\Pi_G} \mathcal D(G/H;\alpha)$. We see that a colour  $D\in\mathcal D(G/H)$ is of
\begin{itemize}
\item type a: if $D\in\mathcal D(G/H;\alpha)$ for $\alpha\in\Pi_{G/H}$;
\item type a': if $D\in\mathcal D(G/H;\alpha)$ for $2\alpha\in\Pi_{G/H}$;
\item type b: Otherwise.
\end{itemize}
Note that although a colour $D$ may belong to different $\mathcal D(G/H;\alpha)$, the type of $D$ is well-defined. Denote by $\mathcal D^\star(G/H)$ the colours in $\mathcal D(G/H)$ of type $\star$. Then $\mathcal D(G/H)=\mathcal D^a(G/H)\sqcup\mathcal D^{a'}(G/H)\sqcup\mathcal D^b(G/H)$. The set $\mathcal D^a(G/H)$ of type a colours if of special importance. Denote by $\varrho^a$ the restriction of $\varrho$ on $\mathcal D^a(G/H)$. We recall the following
\begin{defi}
The quadruple $(\mathfrak M(G/H), \Pi_{G/H}, \Pi_{G/H}^p, \mathcal D^a(G/H))$, together with the map $\varrho^a$, is called the homogeneous spherical datum of $G/H$.
\end{defi}
Here we regard $\mathcal D^a(G/H)$ as an abstract finite set equipped with a map $\varrho^a:\mathcal D^a(G/H)\to\mathfrak N(G/H)$. The homogeneous spherical datum was introduced by \cite{Luna}. It is proved by \cite{Losev} (see also \cite[Section 30]{Timashev-book}) that the homogeneous spherical datum uniquely determines $G/H$ up to $G$-equivariant isomorphism. In particular, the sets $\mathcal D^{a'}(G/H)$, $\mathcal D^b(G/H)$ and their images under $\varrho$ can also be recovered from the homogenous spherical datum (cf. \cite[Section 30]{Timashev-book} and \cite[Section 2]{Ga-Ho-datum}).

Given $G$ and a Borel subgroup $B$ of $G$. Given a sublattice $\bar{\mathfrak M}$ of $\mathfrak X(B)$, $\bar\Pi\subset\bar{\mathfrak M}$ a linearly independent set of primitive elements (spherical roots), $\bar\Pi^p\subset\Pi_G$, and $\bar{\mathcal D}^a$ an abstract finite set equipped with a map $\bar\varrho^a:\bar{\mathcal D}^a\to{\rm Hom}_\mathbb Z(\bar{\mathfrak M},\mathbb Z)$. The axioms that the abstract quadruple $(\bar{\mathfrak M}, \bar\Pi, \bar\Pi^p, \bar{\mathcal D}^a)$ together the map $\bar\varrho^a$ forms a homogeneous spherical datum can be found in \cite[Section 30]{Timashev-book}.

\subsubsection{The embedding theory}
It is a fundamental result that the spherical embeddings of a given $  G/  H$ are classified by combinatorial data called the \emph{coloured fan} \cite{Luna-Vust}. The equivariant embedding theorem is now referred as the Luna-Vust theory. We use \cite{Timashev-survey, Timashev-book} as main reference.
\begin{defi} Let $  H$ be a spherical subgroup of $  G$, $\mathcal D(  G/  H), \mathcal V(  G/  H)$ be the set of colours and the valuation cone, respectively.
\begin{itemize}
\item A {coloured cone} is a pair $(\mathfrak C,\mathfrak R)$, where $\mathfrak R \subset  \mathcal  D(  G/  H), O\not\in\varrho(\mathfrak R)$ and $\mathfrak C \subset \mathfrak N_\mathbb Q(  G/  H)$
 is a strictly convex cone generated by $\varrho(\mathfrak R)$ and finitely many elements of $\mathcal V(  G/  H)$ such
that the intersection of the relative interior of $\mathfrak C$ with $\mathcal V(  G/  H)$ is non-empty;
\item Given two coloured cones $(\mathfrak C,\mathfrak R)$ and $(\mathfrak C',\mathfrak R')$, We say that a coloured cone $(\mathfrak C',\mathfrak R')$ is a face
of another coloured cone $(\mathfrak C,\mathfrak R)$ if $\mathfrak C'$ is a face of $\mathfrak C$ and $\mathfrak R' = \mathfrak R \cap \varrho^{-1}(\mathfrak C')$;
\item A {coloured fan} is a collection $\mathfrak F$ of finitely many coloured cones such that the face of any its coloured cone is still in it, and any $v \in  \mathcal V(  G/  H)$ is contained in the relative interior of at most one of its cones.
\end{itemize}
\end{defi}

The $B$-charts (and hence the simple embeddings of $G/H$) are in bijection with coloured cones. Any  embedding is covered by finitely many simple embeddings, and the (complete) spherical embeddings are classified by coloured fans (cf. \cite[Section 15]{Timashev-book}),
\begin{thm}\label{coloured-fan-classification}
There is a bijection $X\leftrightarrow\mathfrak F_X$ between spherical embeddings of $  G/  H$ up to $  G$-equivariant isomorphism and coloured fans. There is a
bijection $Y \to (\mathfrak C_{Y},\mathfrak R_{Y})$ between the $  G$-orbits in $X$ and the coloured cones in $\mathfrak F_X$.
An orbit $Y$ is in the closure of another orbit $Z$ in $X$ if and only if the coloured cone $(\mathfrak{C}_Z,\mathfrak{R}_Z)$ is a face of $(\mathfrak C_Y,\mathfrak R_Y)$.
\end{thm}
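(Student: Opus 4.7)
The plan is to follow the Luna--Vust strategy, which has three stages: reduction to simple embeddings, classification of simple embeddings, and a gluing step governed by the face axioms of a coloured fan. Call a spherical embedding $X$ of $G/H$ \emph{simple} if it contains a unique closed $G$-orbit. Any spherical embedding admits a canonical open cover by simple ones: for each $G$-orbit $Y\subset X$, remove all $G$-orbits whose closure does not contain $Y$; what remains is an open $G$-invariant subvariety $X_Y$ of $X$ with $Y$ as its unique closed orbit. Thus it suffices to set up the correspondence on simple embeddings and check that it is compatible with restriction to $X_Y$.

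For the classification of simple embeddings, I would proceed in both directions. Given a simple embedding $X$ with closed orbit $Y$, define $\mathfrak R_Y\subset\mathcal D_B(G/H)$ to be the set of colours whose closure in $X$ contains $Y$, and let $\mathfrak C_Y\subset\mathfrak N_\mathbb Q(G/H)$ be the convex cone generated by $\varrho(\mathfrak R_Y)$ together with the images $\varrho(\nu_D)$ for $D$ ranging over the $G$-stable prime divisors of $X$ containing $Y$. Strict convexity, the non-vanishing condition $O\notin\varrho(\mathfrak R_Y)$, and the non-empty intersection of the relative interior of $\mathfrak C_Y$ with $\mathcal V(G/H)$ all follow from separatedness of $X$ and the fact that $G$-invariant valuations inject into $\mathfrak N_\mathbb Q(G/H)$. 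In the reverse direction, given an abstract coloured cone $(\mathfrak C,\mathfrak R)$, one builds a simple embedding by the local structure theorem: choose a $B$-chart of $G/H$, enlarge its coordinate ring to the subalgebra of $\mathbb C(G/H)$ cut out by the condition that the relevant $B$-weights lie in the cone dual to $(\mathfrak C,\mathfrak R)$, and $G$-saturate. Normality and uniqueness follow from Sumihiro's theorem applied to the normal affine $B$-charts, together with the usual correspondence between $G$-invariant valuations and rays in $\mathcal V(G/H)$.

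For the orbit and face part, each orbit $Y$ determines the simple open subvariety $X_Y$, and I would take $(\mathfrak C_Y,\mathfrak R_Y)$ to be the coloured cone of that simple embedding. The inclusion $X_Y\subset X_Z$ is equivalent to $Y\subset\overline{Z}$, and one checks this is in turn equivalent to $(\mathfrak C_Z,\mathfrak R_Z)$ being a face of $(\mathfrak C_Y,\mathfrak R_Y)$, because faces of $\mathfrak C_Y$ cut out exactly the local cones at the smaller orbits in $X_Y$ while the coloured part $\mathfrak R$ tracks which colours still contain the smaller orbit. The fan axioms are then exactly what is required for the charts $\{X_Y\}$ to glue into a separated $G$-variety: the face relation ensures compatibility on overlaps, while the condition that each $v\in\mathcal V(G/H)$ meets the relative interior of at most one cone is the combinatorial avatar of the valuative criterion of separatedness.

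The main obstacle will be the construction direction in stage two, namely showing that the abstract algebra associated to $(\mathfrak C,\mathfrak R)$ defines a normal $G$-variety with exactly the prescribed orbit stratification and divisor structure. This uses the local structure theorem for spherical $B$-actions, the identification of $G$-invariant discrete valuations with lattice points in $\mathcal V(G/H)$, and a careful tracking of which $B$-stable divisors remain of codimension one after $G$-saturation, which is precisely what the condition $O\notin\varrho(\mathfrak R)$ encodes; once this is in hand, the global classification and face-closure statements reduce to book-keeping on the covering by simple charts.
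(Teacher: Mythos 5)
The paper does not prove this theorem: it is quoted as the classical Luna--Vust classification, with a pointer to \cite{Luna-Vust} and to Section 15 of \cite{Timashev-book}, where exactly the strategy you describe (reduction to simple embeddings via the open charts $X_Y$, classification of simple embeddings by coloured cones through the local structure theorem, and gluing/separatedness encoded by the fan axioms) is carried out. Your outline is a faithful summary of that standard argument, so there is nothing to compare beyond noting that the paper simply cites the result rather than reproving it.
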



\subsubsection{Line bundles and polytopes}
Let $X$ be a complete spherical variety, which is a spherical embedding of some $  G/  H$.
Let $L$ be a $  G$-linearlized line bundle on $X$.
In the following we will associated to $(X,L)$ several polytopes, which encode the geometric structure of $X$.

\subsubsection*{Polytope of a divisor}
Denote by $\mathcal I_{  G}(X)=\{D_A|A=1,...,d_0\}$ the set of $  G$-invariant prime divisors in $X$.
Then any $D_A\in\mathcal I_{  G}(X)$ corresponds to a $1$-dimensional cone $(\mathfrak C_A,\emptyset)\in\mathfrak F_X$.
Denote by $u_A$ the prime generator of $\mathfrak C_A$. Recall that $\mathcal D(  G/  H)$ is the set of colours, which are $  B$-stable but not $  G$-stable in $X$. Any $  B$-stable $\mathbb Q$-Weil divisor can be written as
\begin{align}\label{weil-div}
{\mathfrak d}=\sum_{A=1}^{d_0}\lambda_AD_A+\sum_{D\in\mathcal D(  G/  H)}\lambda_DD
\end{align}
for some $\lambda_A,\lambda_D\in\mathbb Q$. Set
$$\mathcal D_X:=\cup\{\mathfrak R\subset\mathcal D(  G/  H)|~\exists(\mathfrak C,\mathfrak R)\in\mathfrak F_X\}$$
By \cite[Proposition 3.1]{Brion89} (cf. \cite[Section 17]{Timashev-book}), ${\mathfrak d}$ is further a $\mathbb Q$-Cartier divisor if and only if there is a rational piecewise linear function $l_{\mathfrak d}(\cdot)$ on $|\mathfrak F_X|$ such that on each cone $\mathfrak C$ of $\mathfrak F_X$, $l_{\mathfrak d}(\cdot)=u_{\mathfrak C}(\cdot)$ for some $u_\mathfrak C\in\mathfrak M_\mathbb Q(G/H)$, and
\begin{align}\label{Cartier-condition}
\lambda_A=l_{\mathfrak d}(u_A),~A=1,...,d_0~\text{and}~\lambda_D=l_{\mathfrak d}(\varrho(D)),~\forall D\in\mathcal D_X.
\end{align}
It is further proved in \cite[Theorem 3.3]{Brion89} that when ${\mathfrak d}$ is ample, the function $l_{\mathfrak d}(\cdot)$ is strictly convex, and $\lambda_d>u_\mathfrak C(\varrho (D))$ for any $D\not\in\mathcal D_X$. Moreover, $l_{\mathfrak d}(x)=v_{\Delta_\mathfrak d}(-x)$ for $x\in|\mathfrak F_X|$, where $v_{\Delta_\mathfrak d}(\cdot)$ is the support function of some convex polytope $\Delta_{\mathfrak d}\subset\mathfrak M_\mathbb R(G/H)$. We call $\Delta_{\mathfrak d}$ the \emph{polytope of ${\mathfrak d}$}. In fact, $\Delta_\mathfrak d$ can be characterized by (cf. \cite{Brion89} and \cite[Section 17]{Timashev-book})
\begin{align*}
\{\lambda\in\mathfrak M_\mathbb R(G/H)|&\lambda_A+u_A(\lambda)\geq0,~A=1,...,d_0,
\\&\text{and}~\lambda_D+\varrho(D)(\lambda)\geq0,~\forall D\in\mathcal D(G/H)\}.
\end{align*}

\subsubsection*{Moment polytope of a line bundle}
Let $(X,L)$ be a polarized spherical variety. Then for any $k\in\mathbb N$ we can decompose ${\rm H}^0(X,L^k)$ as direct sum of irreducible $G$-representations,
\begin{align}\label{H0kL}
{\rm H}^0(X,L^k)=\oplus_{ \lambda\in \Delta_{L,k} } V_{ \lambda},
\end{align}
where $\Delta_{L,k}$ is a finite subset of the $  B$-characters $\mathfrak X(B)$ and each $  V_{ \lambda}$ is called an \emph{isotypic component}, which is isomorphic to the irreducible representation of $G$ with highest weight $ \lambda$.
Set
$$\Delta_L:=\overline{\cup_{k\in\mathbb N}(\frac1k\Delta_{L,k})}.$$
Then $\Delta_L$ is indeed a polytope in $\mathfrak X_\mathbb R(B)$. We call $\Delta_L$ the \emph{moment polytope of $(X,L)$} (cf. \cite[Section 17]{Timashev-book}). Clearly, the moment polytope of $(X,L^k)$ is $k$-times the moment polytope of $(X,L)$ for any $k\in\mathbb N_+$. Suppose that $s$ is a $  B$-semiinvariant (rational) section of $L$ with respect to a character $\chi$. Let ${\mathfrak d}$ be the divisor of $s$. Then $\Delta_{\mathfrak d}$ and $\Delta_L$ are related by
\begin{prop}\label{polytope-prop}(cf. \cite[Theorem 3.30]{Timashev-survey})
The two polytopes $\Delta_L$ and $\Delta_{\mathfrak d}$ are related by $$\Delta_L=\chi+\Delta_{\mathfrak d}.$$
\end{prop}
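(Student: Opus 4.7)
The plan is to establish the equality by showing that the $B$-weights appearing in the isotypic decomposition of $H^0(X,L^k)$ are exactly the lattice points of $k(\chi+\Delta_{\mathfrak d})$, for every $k\in\mathbb N_+$; taking closures over $k$ then yields the identification of the moment polytope with $\chi+\Delta_{\mathfrak d}$. The key move is to trivialize $L^k$ using the reference section $s$, which converts questions about sections of $L^k$ into questions about $B$-semiinvariant rational functions on $G/H$, which in turn are governed directly by $\mathfrak M(G/H)$, $\varrho$, and the piecewise linear function $l_{\mathfrak d}$.

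More concretely, first I would fix a $B$-semiinvariant $\sigma\in V_\lambda\subset H^0(X,L^k)$ and form the quotient $f_\sigma:=\sigma/s^k\in\mathbb C(X)$, which is $B$-semiinvariant of $B$-weight $\lambda-k\chi$; in particular $\lambda-k\chi\in\mathfrak M(G/H)$. The section $\sigma$ extends globally iff $\mathrm{div}(f_\sigma)+k\mathfrak d\geq 0$, and since $f_\sigma$ is $B$-semiinvariant its divisor is $B$-stable, so the effectivity condition reduces to inequalities along $G$-invariant prime divisors $D_A$ and colours $D\in\mathcal D_X$. Using the definition of $\varrho$ and $u_A$, these read
\[
u_A(\lambda-k\chi)+kl_{\mathfrak d}(u_A)\geq 0,\qquad \varrho(D)(\lambda-k\chi)+kl_{\mathfrak d}(\varrho(D))\geq 0.
\]
Dividing by $k$ and invoking that $l_{\mathfrak d}(-\cdot)$ is the support function of $\Delta_{\mathfrak d}$, these are precisely the facet inequalities defining $\Delta_{\mathfrak d}$, so $\tfrac{\lambda}{k}-\chi\in\Delta_{\mathfrak d}$, which gives the inclusion $\cup_k\tfrac{1}{k}\Delta_{L,k}\subset\chi+\Delta_{\mathfrak d}$.

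For the reverse inclusion I would start from a rational $\mu\in\Delta_{\mathfrak d}\cap\mathfrak M(G/H)_\mathbb Q$, choose $k$ with $k\mu\in\mathfrak M(G/H)$, and produce a $B$-semiinvariant rational function $f$ on $G/H$ with weight $k\mu$. The facet inequalities above, now viewed in the opposite direction, guarantee that $f\cdot s^k$ extends to a global $B$-semiinvariant section of $L^k$, and the sphericity of $G/H$ (which forces multiplicity one of each isotypic component in $H^0(X,L^k)$) ensures the corresponding irreducible $V_{k\mu+k\chi}$ actually embeds in $H^0(X,L^k)$. Hence $\mu+\chi\in\tfrac{1}{k}\Delta_{L,k}$, and the set of such rational $\mu+\chi$ is dense in $\chi+\Delta_{\mathfrak d}$, completing the equality after taking closure.

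The main delicate point is the bookkeeping of the $G$-linearization and the precise convention for the support function of $\Delta_{\mathfrak d}$ (with its signs), together with the verification that on the open orbit the divisor of a $B$-semiinvariant rational function is supported exactly on colours with coefficients read off by $\varrho$. Once this dictionary is set up, the computation is essentially symbolic, and the spherical multiplicity-one property is what guarantees the existence direction without having to worry about higher cohomology or reducible isotypic components.
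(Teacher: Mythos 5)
The paper does not actually prove this proposition: it is quoted from Timashev's survey (Theorem 3.30 there, going back to Brion), so there is no in-paper argument to compare against. Your proposal reconstructs essentially the standard proof of that cited result, and the skeleton is correct: trivialize $L^k$ by $s^k$, identify $B$-eigenvectors of $H^0(X,L^k)$ with $B$-semiinvariant rational functions $f_\sigma$ of weight $\lambda-k\chi\in\mathfrak M(G/H)$, translate global regularity into $\mathrm{div}(f_\sigma)+k\mathfrak d\geq 0$ checked on $B$-stable prime divisors, read off the coefficients via $u_A$ and $\varrho$, and use multiplicity-freeness plus density of rational points for the converse. The sign bookkeeping ($\lambda/k-\chi\in\Delta_{\mathfrak d}$ iff $\lambda/k\in\chi+\Delta_{\mathfrak d}$) also comes out right.

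One point needs care. The effectivity of $\mathrm{div}(f_\sigma)+k\mathfrak d$ must be tested against \emph{all} $B$-stable prime divisors of $X$, i.e.\ the $G$-invariant divisors $D_A$ together with the closures of \emph{every} colour $D\in\mathcal D_B(G/H)$ --- not only those in $\mathcal D_X$, as you wrote. Brion's Cartier criterion pins down $\lambda_D=l_{\mathfrak d}(\varrho(D))$ only for $D\in\mathcal D_X$; the colours outside $\mathcal D_X$ contribute independent inequalities $\langle\varrho(D),\lambda-k\chi\rangle+k\lambda_D\geq0$, and these are in general indispensable for $\Delta_{\mathfrak d}$ to be bounded, since the support of the coloured fan only covers $\mathcal V(G/H)\cup\varrho(\mathcal D_X)$ rather than all of $\mathfrak N_{\mathbb R}$. (The paper's phrase ``$l_{\mathfrak d}(-x)$ is the support function of $\Delta_{\mathfrak d}$'' is itself loose on this point; the precise statement in Brion/Timashev defines the polytope by the full list of inequalities.) With that correction your argument goes through, including the converse direction, where the same full list of inequalities is exactly what guarantees $f\cdot s^k$ is regular everywhere.
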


Thus, if we fix any $B$-semiinvariant section $s_0$ of $L$ with $B$-weight $\chi_0$,
then $\Delta_L-\chi_0$ is indeed a polytope in $\mathfrak M_\mathbb R( G/ H)$. Also, it holds
\begin{align}\label{Delta-L-k}
\Delta_{L,k}=k\Delta_L\cap(\mathfrak M(G/H)+k\chi_0),~\forall k\in\mathbb N.
\end{align}
 Note that when a finite covering of $G/H$ is $\mathbb Q$-factorial, or $G/H$ is the flat limit of some space with $\mathbb Q$-factorial finite covering, then we can choose $\chi_0\in\mathfrak M_\mathbb R( G/ H)$ and therefore $\Delta_L\subset\mathfrak M_\mathbb R( G/ H)$.

When $X$ is ($\mathbb Q$-)Fano and $L=-K_X$, \cite{Ga-Ho} proved that there is a canonical choice of $B$-invariant divisor ${\mathfrak d_0}$ of $L$ in form of \eqref{weil-div},
\begin{align}\label{-K-d0}
{\mathfrak d_0}=\sum_{A=1}^{d_0}D_A+\sum_{D\in\mathcal D(  G/  H)}m_DD,
\end{align}
where the coefficients $m_D$'s are explicitly obtained in \cite{Ga-Ho} according to the type of each colour $D$. In fact, this divisor corresponds to a $B$-semiinvariant section $\mathfrak d_0$ of
$-K_X$ (in case $X$ is Gorenstein Fano) with $B$-weight
\begin{align}\label{kappa}
\kappa_P=\sum_{\alpha\in\Phi^G_+,\alpha\not\perp\Delta_L}\alpha.
\end{align}
Here $\Phi^G_+$ are the set of positive roots of $G$ with respect to $B$, and the orthogonal is considered with respect to the Killing form. Recall the polytope $\Delta_{\mathfrak d_0}$ of $\mathfrak d_0$. Its dual polytope $\Delta_{\mathfrak d_0}^*$ is a ($\mathbb Q$-)$G/H$-reflexive polytopes defined in \cite[Section 7]{Ga-Ho}, which is the convex hull
$$\Delta_{\mathfrak d_0}^*={\rm Conv}(\{D_A|A=1,...,d_0\}\cup\{\varrho(D)/m_D|D\in\mathcal D(G/H)\}).$$
Moreover, the coloured fan $\mathfrak F_X$ can be recovered from $\Delta^*_{\mathfrak d}$ by taking its supported coloured face fan (cf. \cite[Sections 7-8]{Ga-Ho}). 

\subsubsection{The Okounkov body of a spherical variety}
For each dominant weight $ \lambda$ of $  G$, there is a Gel'fand-Tsetlin polytope $\Delta^{\rm GT}( \lambda)$ which has the same dimension with the maximal unipotent subgroup $  N_u$ of $  G$ (cf. \cite{Kirichenko-Smirnov-Timorin}). It is known that
\begin{align}\label{okounkov-body-fibre}
\dim(V_{ \lambda})=\text{number of integral points in $\Delta^{\rm GT}( \lambda)$}.
\end{align}

Let $X$ be a $  G$-spherical variety. It is proved in \cite{Okounkov} that the Okounkov body $\Delta^{\rm O}$ of $X$ is given by the convex hull
\begin{align}\label{okounkov-body}
\Delta^{\rm O}&:={\rm Conv}\left(\cup_{k\in\mathbb N_+}\cup_{ \lambda\in{\Delta_L}\cap\frac1k\Delta_{L,k}}( \lambda,\frac1k\Delta^{\rm GT}(k \lambda))\right)\notag\\
&\subset\Delta_L\times\mathbb R^{\dim(  N_u)}.
\end{align}
Note that the Gel'fand-Tsetlin polytope $\Delta^{\rm GT}( \lambda)$ is linear in $ \lambda$.
Thus $\Delta^{\rm O}$ is a convex polytope in $(\mathfrak M_\mathbb R(  G/  H)+\chi_0)\times\mathbb R^{\dim(  N_u)}.$
It is a fibration over $\Delta_L$ so that the fibre at each $ \lambda\in\frac1k\Delta_{L,k}$ is $\frac1k\Delta^{\rm GT}(k \lambda)$.

\subsection*{Notations}
In this paper we use the following conventions:
\begin{itemize}
\item $\Phi^\cdot$-a root system;
\item $\Phi_{+}^{\cdot}$-a chosen set of positive roots in $\Phi^\cdot$;
\item $\Pi_{\cdot}$-the simple roots in $\Phi_{+}^{\cdot}$;
\item $\mathfrak X(\cdot)$-characters of a group;
\item $\mathfrak X_+(\cdot)$-dominant weights of a group;
\item ${\rm Conv}(\cdot)$-convex hull of a set;
\item ${\rm RelInt}(\cdot)$-relative interior of a set;
\item $S^G$-the subset of all $G$-invariants in a set $S$ with group $G$-action;
\item $S^{(H)}_\chi$-the subset of all $H$-semiinvariants with $H$-character $\chi$ in a set $S$ admitting a group $H$-action.
\end{itemize}

\section{Classification of $G$-equivariant normal $\mathbb R$-test configurations}

\subsection{Multiplication in the Kodaira ring}
Suppose that $G/H$ is a spherical homogeneous space and $L$ is a $G$-linearized line bundle on it. For example, we may choose any projective $G$-spherical embedding $X$ of $G/H$, which is embedded in projective space by some very ample line bundle $L_X$, and take $L=L_X|_{G/H}$. Denote by $\chi$ the $H$-character so that $H$ acts on $L_{eH}$ through $\chi$. For any $k\in\mathbb N_+$, we can decompose ${\rm H}^0(G/H,L^k)$ into direct sum of isotypic components
\begin{align}\label{H0(G/H-Lk)}
{\rm H}^0(G/H,L^k)=\oplus_{\lambda\in\Delta_{L,k}}V_\lambda,~\text{for a finite set $\Delta_{L,k}\subset\mathfrak X_+(G)$.}
\end{align}
Here by $\mathfrak X_+(G)$ we denote the set of dominant $G$-weights.

To investigate the multiplication in the Kodaira ring $$R(X,L)=\oplus_{k\in\mathbb N}{\rm H}^0(G/H,L^k),$$ we first associate each isotypic component $V_\lambda$ in \eqref{H0(G/H-Lk)} an isotypic component in $\mathbb C[G]$. Recall that by \cite[Section 2]{Timashev-book}, the restriction $L$ on $G/H$ is isomorphic to some homogenous line bundle $G*_H\mathbb C_\chi$, where $\mathbb C_\chi\cong\mathbb C$ with the $H$-action via a character $\chi$ of $H$. On the other hand, consider the pull back $\pi_{G/H}^*L$ of $L$ on $G$ via the canonical projection $\pi_{G/H}:G\to G/H$. Since $G$ is affine,\footnote{Up to replacing $G$ by its finite covering we can always assume that $G$ is affine.} $\pi_{G/H}^*L$ is trivial. It suffices to find in
\begin{align}\label{C[G]}
\mathbb C[G]=\oplus_{\lambda\in\mathfrak X_+(G)}V_\lambda\otimes V_\lambda^*
\end{align}
the $H$-semiinvariants with respect to the $H$-character $k\chi$, that descends to a section in ${\rm H}^0(G/H,L^k)$. Here $H$ acts on the $V_\lambda^*$-factor from right. More precisely, we get
\begin{align}\label{Kodaira-G/H-L}
R^o:=\oplus_{k\in\mathbb N}{\rm H}^0(G/H,L^k)=\oplus_{k\in\mathbb N}\oplus_{\lambda\in\mathfrak X_+(G)}V_\lambda\otimes (V_\lambda^*)^{(H)}_{k\chi}.
\end{align}
Thus, $\lambda\in\Delta_{L,k}$ for some $k\in\mathbb N$ if and only if $(V_\lambda^*)^{(H)}_{k\chi}\not=\{O\}$. Note that by \cite[Theorem 25.1]{Timashev-book}, for any $\lambda\in\mathfrak X_+(G)$ and $k\in\mathbb N$,
$$\dim((V_\lambda^*)^{(H)}_{k\chi})\leq1.$$
Hence $V_\lambda\subset \oplus_{k\in\mathbb N}{\rm H}^0(G/H,L^k)$ if and only if
\begin{align}\label{V-lambda-in-H0}
\lambda\in\mathfrak X_+(G)_{k\chi}:=\{\lambda\in\mathfrak X_+(G)|\dim((V_\lambda^*)^{(H)}_{k\chi})=1\}.
\end{align}

Then we have:
\begin{prop}\label{mult-rule-isotypic}
Suppose that $L$ is the very ample line bundle as above. Then for any $k_1,k_2\in\mathbb N_+$ and $\lambda_i\in\Delta_{L,k_i},~i=1,2$. We can decompose $V_{\lambda_1}\cdot V_{\lambda_2}$ into direct sum of isotypic components
\begin{align*}
V_{\lambda_1}\cdot V_{\lambda_2}=\oplus_{\mu}V_{\lambda_1+\lambda_2-\mu},
\end{align*}
where $\mu$ is summing over all non-negative linear combinations of simple roots of $G$ satisfies the following Conditions ($\star$1)-($\star$2):
\begin{itemize}
\item[($\star$1)] $\mu$ appears in
\begin{align}\label{tensor-tail}
V_{\lambda_1}\otimes V_{\lambda_2}=\oplus_\mu V_{\lambda_1+\lambda_2-\mu}^{\oplus m_\mu},~m_\mu\in\mathbb N_+,
\end{align}
so that $(\lambda_1+\lambda_2-\mu)\in\mathfrak X_+(G)_{(k_1+k_2)\chi};$
\end{itemize}
and
\begin{itemize}
\item[($\star$2)] $\mu$ satisfies the condition that
$(V_{\lambda_1}^*)^{(H)}_{k_1\chi}\otimes(V_{\lambda_2}^*)^{(H)}_{k_2\chi}$ have nonzero component in $(V_{\lambda_1+\lambda_2-\mu}^*)^{(H)}_{(k_1+k_2)\chi}$

\end{itemize}
\end{prop}

\begin{proof}

Given $G$-module $V_\lambda$, by taking some basis of $V_\lambda^*$, we can get a representation of $V_\lambda^*$: $\rho_\lambda: G\to {\rm GL}_n(C)$ where $n=\dim V_\lambda$. Then $V_\lambda\otimes V_\lambda^*$ can be realized as a linear space spanned by the functions $\rho_{\lambda,i,j}: G\to \mathbb C$ which is given by taking the $(i,j)$-entry of the matrix $\rho_\lambda(g),g\in G$. $(V_{\lambda_1}\otimes V_{\lambda_1}^*)\cdot(V_{\lambda_2}\otimes V_{\lambda_2}^*)$ in $\mathbb C[G]$ is the linear space spanned by the functions by taking entries of a representation of the $G$-module $V_{\lambda_1}^*\otimes V_{\lambda_2}^*$.

Consider $V_{\lambda_1}^*\otimes V_{\lambda_2}^*=\oplus_\mu V_{\lambda_1+\lambda_2-\mu}^*$ ($\mu$ may appear with multiplicity), then taking a basis $v_{1,\mu}, \dots, v_{l_{\mu},\mu}$ for each $V_{\lambda_1+\lambda_2-\mu}^*$ where $l_{\mu}=\dim V_{\lambda_1+\lambda_2-\mu}$ such that $v_{1,\mu}\in (V_{\lambda_1+\lambda_2-\mu}^*)^{(H)}_{(k_1+k_2)\chi}$ if $(V_{\lambda_1+\lambda_2-\mu}^*)^{(H)}_{(k_1+k_2)\chi}\ne 0.$ Under each basis, we get a representation $\rho_{\lambda_1+\lambda_2-\mu}: G\to {\rm GL}_{l_\lambda}(\mathbb C)$ and $V_{\lambda_1+\lambda_2-\mu}\otimes (V_{\lambda_1+\lambda_2-\mu}^*)^{(H)}_{(k_1+k_2)\chi}$ can be realised the linear space spanned by the functions $\rho_{\lambda_1+\lambda_2-\mu,1,k}$ where $k=1,\dots,l_{\mu}$ if $(V_{\lambda_1+\lambda_2-\mu}^*)^{(H)}_{(k_1+k_2)\chi}\ne 0$. The union of such basis forms a basis of $V_{\lambda_1}^*\otimes V_{\lambda_2}^*$.

Now consider $(V_{\lambda_1}\otimes (V_{\lambda_1}^*)^{(H)}_{k_1\chi})\cdot(V_{\lambda_2}\otimes (V_{\lambda_2}^*)^{(H)}_{k_2\chi})$. Given $0\not= v_{\lambda_1,\lambda_2}\in (V_{\lambda_1}^*)^{(H)}_{k_1\chi}\otimes(V_{\lambda_2}^*)^{(H)}_{k_2\chi}$ and taking a basis  of $V_{\lambda_1}^*\otimes V_{\lambda_2}^*$ such that $v_{\lambda_1,\lambda_2}$ is the first vector of this basis. Under such basis, there is a representation $\rho_{\lambda_1,\lambda_2}$ of $V_{\lambda_1}^*\otimes V_{\lambda_2}^*$, then $(V_{\lambda_1}\otimes (V_{\lambda_1}^*)^{(H)}_{k_1\chi})(V_{\lambda_2}\otimes (V_{\lambda_2}^*)^{(H)}_{k_2\chi})$ is spanned by the functions $\rho_{\lambda_1,\lambda_2,1,k}$ where $k=1,\dots, \dim ( V_{\lambda_1}\otimes V_{\lambda_2})$.

Since $$(V_{\lambda_1}^*)^{(H)}_{k_1\chi}\otimes(V_{\lambda_2}^*)^{(H)}_{k_2\chi} \subseteq (V_{\lambda_1}^*\otimes V_{\lambda_2}^*)^{(H)}_{(k_1+k_2)\chi}=\oplus_\mu(V_{\lambda_1+\lambda_2-\mu}^*)^{(H)}_{(k_1+k_2)\chi},$$ then $v_{\lambda_1,\lambda_2}=\sum_{\mu}a_{\mu}v_{1,\mu}$. By the above, we see that $$(V_{\lambda_1}\otimes (V_{\lambda_1}^*)^{(H)}_{k_1\chi})\cdot(V_{\lambda_2}\otimes (V_{\lambda_2}^*)^{(H)}_{k_2\chi})$$ is spanned by the functions $a_\mu\rho_{\lambda_1+\lambda_2-\mu,1,k}$ where $k=1,\dots, l_\mu$ and $\mu$ satisfies the condition ($\star1$). Thus $$(V_{\lambda_1}\otimes (V_{\lambda_1}^*)^{(H)}_{k_1\chi})(V_{\lambda_2}\otimes (V_{\lambda_2}^*)^{(H)}_{k_2\chi})=\oplus_{a_{\mu}\ne 0 }V_{\lambda_1+\lambda_2-\mu}\otimes (V_{\lambda_1+\lambda_2-\mu}^*)^{(H)}_{(k_1+k_2)\chi},$$ whence the result.

\end{proof}

Consider the isotypic decomposition of any $p$ isotypic components
\begin{align}\label{v1-cdot-cdot-Vp}
V_{\lambda_1}\cdot...\cdot V_{\lambda_p}=\oplus_{\mu}V_{\lambda_1+...+\lambda_p-\mu},
\end{align}
in Proposition \ref{mult-rule-isotypic}, where each $\lambda_i\in \Delta_{L,k_i}(\subset\mathfrak X_+(G))$ and $\mu$ is a non-negative rational linear combination of roots in $\Pi_G$.
Set
\begin{align*}
\mathfrak K:=\{\mu|&\text{There are $\lambda_i\in\Delta_{L,k_i},~k_i\in\mathbb N$ for $i=1,..,p$ so that} \\&\text{$V_{\lambda_1+...+\lambda_p-\mu}$ appears in \eqref{v1-cdot-cdot-Vp}}\}.
\end{align*}

Using the argument of \cite[Section 20]{Timashev-book}, we have
\begin{coro}\label{val-cone}
The set $\mathfrak K$ is a finitely generated semigroup and $\mathcal V(G/H)$ is minus the dual cone of the cone generated by $\mathfrak K$. Moreover, let $\{\mu_j\}$ be the set of generators of ${\rm Span}_{\mathbb Q_{\geq0}}(\mathfrak K)$. Then up to rescaling each $\mu_{j}$ by a positive rational number, $\{\mu_{j}\}$ coincides with the set of spherical roots $\Pi_{G/H}$.
\end{coro}

\begin{proof}
This proof in fact goes back to \cite[Section 3]{Timashev-survey} when $G/H$ is quasiaffine and \cite[Section 20]{Timashev-book} for general cases. We sketch it here for readers' convenience. First, we deal the case when $G/H$ is quasiaffine. In this case $\chi=0$ and $R^o=\mathbb C[G/H]$ in \eqref{Kodaira-G/H-L}. Also, up to a sign difference, $\mathfrak K$ is precisely the set of tail vectors of $R^o$ defined in \cite[Definition 20.9]{Timashev-book}. It is proved by \cite[Proposition 2.13]{Ale-Bri} (see also \cite[Corollary E.15]{Timashev-book}) that $\mathfrak K$ is finitely generated. By \cite[Section 20.5]{Timashev-book}, $\mathfrak v\in\mathcal V(G/H)$ if and only if it is non-positive on the tail vectors. Thus $\mathcal V(G/H)=-(\mathfrak K)^\vee$. Recall that the spherical roots are the primitive generators of $-(\mathcal V(G/H))^\vee$. We get the Corollary when $G/H$ is quasiaffine.

General cases can be reduced to quasiaffine cases by \cite[Remark 20.8]{Timashev-book}. We want to construct some function ring on $G/H$ so that its field of quotients is $\mathbb C(G/H)$. Let $Z$ be any projective spherical embedding of $G/H$, which is embedded in projective space via some very ample line bundle $L$, and consider the affine cone $\mathfrak C(Z)$ of $Z$. Then $\mathfrak C(Z)$ is an affine $G\times \mathbb C^*$-spherical homogeneous space. As pointed out in \cite[Remark 20.8]{Timashev-book}, the valuation cone $\mathcal V(\mathfrak C(Z))$ projects to $\mathcal V(G/H)$, and the grading of $\mathbb C[\mathfrak C(Z)]$ determines two $G\times\mathbb C^*$-invariant valuations which precisely span the kernel of this projection. Let $R(Z,L)=\oplus_{k\in\mathbb N}{\rm H}^0(Z,L^k)$
be the Kodaira ring of $(Z,L)$. Up to replace $L$ by $L^d$ for some sufficiently large integer $d$, we may assume that $R(Z,L)$ is generated by the piece of $k=1$. 
Then $\mathbb C[\mathfrak C(Z)]=\oplus_{k\in\mathbb N}\mathbb C[\mathfrak C(Z)]^{(\mathbb C^*)}_k$, and each $\mathbb C[\mathfrak C(Z)]^{(\mathbb C^*)}_k$ is the pull back of ${\rm H}^0(Z,L^k)$ on $\mathfrak C(Z)$ via the canonical projection.

On the other hand, let $\chi$ be the $H$-character of $L$ in \eqref{Kodaira-G/H-L}. Denote by $\mathcal O$ the open $G\times\mathbb C^*$-orbit in $\mathfrak C(Z)$. Then $\mathcal O$ is quasiaffine, and $\mathbb C(\mathcal O)=\mathbb C(\mathfrak C(Z))$. In fact, $\mathcal O=G\times\mathbb C^*/\tilde H$, where
$$\tilde H=\{(h,\chi^{-1}(h))|h\in H\}\subset H\times\mathbb C^*.$$
Since every function in $\mathbb C[\mathcal O]$ pulls back to a function in $\mathbb C[G\times\mathbb C^*]$, we have
\begin{align}\label{C[O]}
\mathbb C[\mathcal O]=\oplus_{(\lambda,k)\in\mathfrak X_+(G)\oplus\mathbb Z}\tilde V_{(\lambda,k)}\otimes (\tilde V^*_{(\lambda,k)})^{\tilde H},
\end{align}
where $\tilde V_{(\lambda,k)}\otimes\tilde V^*_{(\lambda,k)}$ is the $(\lambda,k)$-isotypic component of $\mathbb C[G\times\mathbb C^*]$. It is direct to check that a $\tilde V_{(\lambda,k)}\otimes (\tilde V^*_{(\lambda,k)})^{\tilde H}$-factor in \eqref{C[O]} is non-trivial if and only if the $V_\lambda\otimes (V_\lambda^*)^{(H)}_{k\chi}$-factor in \eqref{Kodaira-G/H-L} is non-trivial. 
Also, from Proposition \ref{mult-rule-isotypic} we have
$$\tilde V_{(\lambda_1,k_1)}\cdot\tilde V_{(\lambda_2,k_2)}=\oplus_{\mu~\text{satisfies the Conditions ($\star$1)-($\star$2)}}\tilde V_{(\lambda_1+\lambda_2-\mu, k_1+k_2)}.$$
Thus $\mathfrak K$ is the tail vectors of $\mathbb C[\mathcal O]$, and is finitely generated. By \cite[Corollary 19.6]{Timashev-book}, taking projection to the $\mathfrak N_\mathbb Q(G/H)$-factor we get the remaining parts of the Corollary.
\end{proof}

\subsection{The classification theorem}

With the help of Proposition \ref{mult-rule-isotypic}, we can show the main classification results Theorems \ref{main-thm-1}-\ref{main-thm-3}. The remaining part of the proof is essentially from \cite{LL-arXiv-2021}.

Suppose that $G/H$ is a spherical homogeneous space and $(X,L)$ a polarized spherical embedding of $G/H$. For simplicity, we write $\mathfrak M$ in short of $\mathfrak M_\mathbb R(G/H)$.
Also assume that there is a $B$-semiinvariant section $s_0$ of $L$ with weight $\chi_0$. Recall \eqref{Delta-L-k}, we can decompose ${\rm H}^0(X,L^k)$ into direct sum of isotypic components (cf. \cite[Section 17.4]{Timashev-book}),
\begin{align}\label{H0-M}
R_k={\rm H}^0(X,L^k)=\oplus_{\lambda\in  \Delta_{L,k}}V_\lambda,
\end{align}
where $V_\lambda$ is the irreducible $G$-representation of highest weight $\lambda$. The Kodaira ring of $M$ is given by
\begin{align}\label{R}
R(X,L)=\oplus_{k\in\mathbb N}R_k.
\end{align}

Suppose that $\mathcal F$ is a $G$-equivariant filtration on $R$. Then by Definition \ref{filtrantion-def} (1)-(2), we have
\begin{align}\label{F-s-k}
\mathcal F^sR_k=\oplus_{s^{(k)}_\lambda\geq s}V_\lambda,
\end{align}
where we associated to each $V_\lambda$ in \eqref{H0-M} a number $s^{(k)}_\lambda$. From \eqref{H0-M} we see that the filtration $\mathcal F$ is totally determined by the collection of $s_{\lambda}^{(k)}$'s. Recall the Abelian group $\Gamma(\mathcal F)$ defined after \eqref{group-discon}. Under the normalization of Remark \ref{F-normalized}, we see that the corresponding Rees algebra \eqref{rees-def} reduces to
\begin{align}\label{rees}
{\rm R}(\mathcal F)=\oplus_{k\in\mathbb N}\oplus_{s\in\Gamma(\mathcal F)}\oplus_{\lambda\in  \Delta_{L,k},s^{(k)}_\lambda\geq s}t^{-s}V_\lambda.
\end{align}
$\mathcal F$ is an $\mathbb R$-test configuration if and only if \eqref{rees} is finitely generated.

For any two $\lambda_i\in \Delta_{L,k_i}$ and $s_i$ such that $V_{\lambda_i}\subset\mathcal F^{s_i}R_{k_i}$, $i=1,2$. By Definition \ref{filtrantion-def} (4),
$$V_{\lambda_1+\lambda_2}\subset V_{\lambda_1}\cdot V_{\lambda_2}\subset\mathcal F^{s_1}R_{k_1}\cdot\mathcal F^{s_2}R_{k_2}\subset\mathcal F^{s_1+s_2}R_{k_1+k_2}.$$
Thus, we have $s_{\lambda_1+\lambda_2}^{(k_1+k_2)}\geq s_1+s_2.$ In particular,
\begin{align}\label{alomost-concave}
s_{\lambda_1+\lambda_2}^{(k_1+k_2)}\geq s_{\lambda_1}^{(k_1)}+s_{\lambda_2}^{(k_2)}.
\end{align}

Also, Definition \ref{filtrantion-def} (3) is equivalent to that $\{s^{(k)}_\lambda/k\}_{\lambda\in \Delta_{L,k}}$ are uniformly bounded with respect to all $k\in\mathbb N_+$. 

Denote by $\mathcal V_\mathbb R(G/H)$ the closure of $\mathcal V(G/H)$ in $\mathfrak N_\mathbb R(G/H)$. Assume that $\Gamma(\mathcal F)$ is normalized according to Remark \ref{F-normalized}. We have
\begin{thm}\label{G-classify}
Let $(X,L)$ be a polarized spherical embedding of $G/H$ with moment polytope $\Delta_L$. Then for any $G$-equivariant normal $\mathbb R$-test configuration $\mathcal F$ of $(X,L)$, there is a concave, piecewise-linear function $f$ on $\Delta_L$ whose domains of linearity consist of rational polytopes in $\mathfrak M_\mathbb Q$ such that $\min f=0$, $\nabla f\in\mathcal V_\mathbb R(G/H)$ and
\begin{align}\label{s-k-G-final}
s_{\lambda}^{(k)}=\max\{s\in\Gamma(\mathcal F)|s\leq kf(\frac\lambda k)\},~\forall \lambda\in \Delta_{L,k}~\text{and}~k\in\mathbb N.
\end{align}
Moreover, $s_{\lambda}^{(k)}=kf(\frac\lambda k)$ if $\frac1k\lambda$ is a vertex of the domains of linearity of $f$.

Conversely, given any such $f$ and $r_0\in \mathbb N_+$ so that the domains of linearity of $r_0f(\frac\cdot {r_0})$ in $r_0\Delta_L$ consist of integral polytopes in $\mathfrak M+r_0\chi_0$. Denote by $\Gamma_{r_0}({\rm Vert}(f))$ the Abelian group generated by
$$\{r_0f(\frac1r_0\lambda)|\lambda~\text{is a vertex of a domain of linearity of $f$}\}.$$
Then for any finitely generated Abelian group $\Gamma$ containing $\Gamma_{r_0}({\rm Vert}(f))$, the collection of points of discontinuity
\begin{align}\label{s-k-G-general-inv}
s_{\lambda}^{(k)}=\sup\{s\in\Gamma|s\leq kf(\frac\lambda k)\},~\forall \lambda\in \Delta_{L,k}~\text{and}~k\in\mathbb N.
\end{align}
together with \eqref{F-s-k} defines a $G$-equivariant normal $\mathbb R$-test configuration $\mathcal F$ of $(X,L)$ satisfying  \eqref{s-k-G-final}.
\end{thm}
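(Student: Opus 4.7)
The plan is to split the proof into the forward direction (extracting the function $f$ from a test configuration $\mathcal F$) and the converse (building $\mathcal F$ from the combinatorial data), using Proposition \ref{mult-rule-isotypic} and Corollary \ref{val-cone} as the bridge between the algebraic structure of ${\rm R}(\mathcal F)$ and the geometry of $\Delta_L$ and $\mathcal V(G/H)$.

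For the forward direction, by $G$-equivariance and left-continuity the filtration is encoded entirely by the numbers $\{s_\lambda^{(k)}\}$ appearing in \eqref{F-s-k}. The first step is to upgrade the basic multiplicativity inequality \eqref{alomost-concave}: applying Proposition \ref{mult-rule-isotypic} to $V_{\lambda_1}\cdot V_{\lambda_2}$ and using multiplicativity of $\mathcal F$, I obtain for every tail vector $\mu$ arising in that decomposition the stronger inequality
\begin{equation*}
s^{(k_1+k_2)}_{\lambda_1+\lambda_2-\mu}\ \geq\ s^{(k_1)}_{\lambda_1}+s^{(k_2)}_{\lambda_2}.
\end{equation*}
Next, I construct $f$ on the rational points of $\Delta_L$. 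The $\mu=0$ case of the above is superadditivity, so Fekete's lemma produces a limit $f(\lambda/k):=\lim_n s_{n\lambda}^{(nk)}/(nk)$, which is bounded by Definition \ref{filtrantion-def}(3) and upper semicontinuous. Passing to the limit in the inequality above yields simultaneously that $f$ is concave and that $f(x-\mu/n)\geq f(x)$ for every tail vector $\mu$ and every integer $n$; by Corollary \ref{val-cone}, the cone of such $\mu$ is exactly $(-\mathcal V(G/H))^\vee$, so the directional-monotonicity condition translates into $\nabla f\in\mathcal V(G/H)$.

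To obtain rationality and piecewise-linearity of $f$, I exploit finite generation of ${\rm R}(\mathcal F)$: pick a finite set of bi-homogeneous generators $t^{-s_i}\xi_i$ with $\xi_i$ lying in some isotypic $V_{\lambda_i}\subset R_{k_i}$. Any $V_\lambda\subset R_k$ with $V_\lambda\subset\mathcal F^sR_k$ is produced by products of these generators modulo tail-vector corrections from Proposition \ref{mult-rule-isotypic}; this realizes $s_\lambda^{(k)}$ as the maximum of a linear program with finitely many constraints parameterized by $(\lambda_i/k_i, s_i/k_i)$, so $f$ is concave piecewise linear with vertices in $\mathfrak M_\mathbb Q$. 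Since $s_\lambda^{(k)}$ is by definition a discontinuity, it lies in $\Gamma(\mathcal F)$; combined with $s_\lambda^{(k)}\leq kf(\lambda/k)$ (with equality at vertices because of how $f$ is constructed), this gives the formula \eqref{s-k-G-final}. Normalizing by the shift in Remark \ref{F-normalized} makes $\min f=0$.

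For the converse, given $(f,r_0,\Gamma)$, I define $s_\lambda^{(k)}$ via \eqref{s-k-G-general-inv} and $\mathcal F^sR_k$ via \eqref{F-s-k}, and I verify the axioms of Definition \ref{filtrantion-def}. Decreasing, left-continuity and linear boundedness are immediate from the definition and the boundedness of $f$; $G$-equivariance is automatic since each piece is a sum of isotypic components. The multiplicativity axiom is the crucial one: Proposition \ref{mult-rule-isotypic} tells me $V_{\lambda_1}\cdot V_{\lambda_2}$ decomposes into pieces $V_{\lambda_1+\lambda_2-\mu}$, and for each such $\mu$ concavity of $f$ gives $(k_1+k_2)f(\frac{\lambda_1+\lambda_2}{k_1+k_2})\geq k_1f(\lambda_1/k_1)+k_2f(\lambda_2/k_2)$, while $\nabla f\in\mathcal V(G/H)$ combined with Corollary \ref{val-cone} gives $f(\frac{\lambda_1+\lambda_2-\mu}{k_1+k_2})\geq f(\frac{\lambda_1+\lambda_2}{k_1+k_2})$; since the right-hand side lies in $\Gamma$, the inequality survives passage to the $\sup$ over $\Gamma$, giving multiplicativity. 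For finite generation of ${\rm R}(\mathcal F)$, the hypothesis on $r_0$ ensures $r_0f$ has only finitely many vertices lying in the lattice $\mathfrak M+r_0\chi_0$; the corresponding isotypic pieces, together with finitely many generators of $\Gamma$, generate the Rees algebra, and finiteness of the tail-vector semigroup $\mathfrak K$ guarantees that products of these generators exhaust every piece of ${\rm R}(\mathcal F)$.

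The main obstacle is the piecewise linear / rational vertex structure of $f$ in the forward direction: converting finite generation of the Rees algebra into a uniform combinatorial description on all of $\Delta_L$ requires controlling the tail vectors $\mu$ across arbitrarily large tensor products, which is precisely what the finite generation of $\mathfrak K$ in the remark after Corollary \ref{val-cone} allows. Once this is in hand, the converse is a routine if careful check; the only subtle point there is that multiplicativity must be verified against \emph{all} isotypic summands produced by Proposition \ref{mult-rule-isotypic}, not just the top one.
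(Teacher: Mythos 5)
There are two genuine gaps, and both trace back to the same omission: you never use the hypothesis that $\mathcal F$ is a \emph{normal} $\mathbb R$-test configuration in the forward direction, and you never prove that the $\mathcal F$ you construct in the converse direction is normal.

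In the forward direction, your Fekete/superadditivity argument produces a concave function $f$ with $s^{(k)}_\lambda\leq kf(\frac{\lambda}{k})$, but \eqref{s-k-G-final} asserts an exact formula, i.e.\ also the matching lower bound $s^{(k)}_\lambda\geq\max\{s\in\Gamma(\mathcal F)\,|\,s\leq kf(\frac{\lambda}{k})\}$. Your sentence ``since $s^{(k)}_\lambda$ lies in $\Gamma(\mathcal F)$ and $s^{(k)}_\lambda\leq kf(\frac{\lambda}{k})$, this gives the formula'' only re-derives the upper bound. The lower bound is false without normality: already in the toric case take three collinear lattice points $\mu_1$, $\mu_2=\tfrac12(\mu_1+\mu_3)$, $\mu_3$ in $\Delta_L$, set $s^{(1)}_{\mu_1}=s^{(1)}_{\mu_3}=1$ and $s^{(1)}_{\mu_2}=0$, and generate; this is a perfectly good finitely generated equivariant filtration whose concave envelope satisfies $f(\mu_2)=1\neq s^{(1)}_{\mu_2}$. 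The paper closes exactly this gap by an integral-closure argument (its Steps 1.1--1.2): if some $s^{(k)}_\mu$ were smaller than the envelope value, then $t^{-\hat s}e_\mu$ would not lie in ${\rm R}(\mathcal F)$ while a suitable power of it does (using the tail-vector structure from Proposition \ref{mult-rule-isotypic} and Corollary \ref{val-cone}), so $t^{-\hat s}e_\mu$ is integral over ${\rm R}(\mathcal F)$, contradicting normality of the Rees algebra. Some argument of this kind is indispensable; your linear-programming description of $s^{(k)}_\lambda$ in terms of a finite generating set gives the analogue of \eqref{s-k-G}, which is a maximum over products of generators, and this agrees with the concave-envelope formula only after normality is invoked.

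In the converse direction you verify the filtration axioms and finite generation, which matches the paper's Step 2.1, but the theorem claims the resulting test configuration is \emph{normal}, and that is the harder half: the paper's Step 2.2 shows ${\rm R}(\mathcal F)$ is integrally closed in the normal ring ${\rm R}'=\oplus_{k}\oplus_{s}\oplus_{\lambda}t^{-s}V_\lambda$ by choosing a vertex $\tau$ of the convex hull of the weights occurring in a putative integral element $\bar\sigma$, tracking the component $t^{-ls}\sigma_{l\tau}$ of $\bar\sigma^l$ via Lemma \ref{alg-lem}, and letting $l\to\infty$ to contradict $s>mf(\frac{1}{m}\tau)$. Nothing in your proposal substitutes for this. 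The remainder of your outline --- superadditivity, the gradient condition via Corollary \ref{val-cone}, the multiplicativity check against all tail summands, and finite generation from the vertex lattice points --- is sound and parallels the paper.
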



\begin{proof}
The proof will be divided into two parts according to the two directions.

\textbf{Part-1: From equivariant normal $\mathbb R$-test configurations to \eqref{s-k-G-final}.}

Suppose that $\mathcal F$ is a $G$-equivariant normal $\mathbb R$-test configuration of $(X,L)$. By \eqref{total-space-ring}, ${\rm R}(\mathcal F)$ is the Kodaira ring of the total space. Then up to replacing $L$ by some $L^{r_0}$ with $r_0\in\mathbb N_+$, we may assume that the Rees algebra ${\rm R}(\mathcal F)$ in \eqref{rees} is a normal ring.

We are going to construct a function $f$ satisfying \eqref{s-k-G-final}. As in \cite[Proposition 2.15]{Boucksom-Hisamoto-Jonsson}, for sufficiently large $r_0\in\mathbb N_+$ we may assume that the Rees algebra \eqref{rees} is generated by the piece $k=1$. We can choose $r_0$ sufficiently divisible so that even each vertex of $  {r_0\Delta_L}$ is also integral. Also, without loss of generality, we can assume that $s_\lambda^{(1)}\geq0$ for all $\lambda\in \Delta_{L,1}(=\Delta_L\cap(\mathfrak M+\chi_0))$.

Let $\lambda\in \Delta_{L,k}(=k \Delta_L\cap(\mathfrak M+k\chi_0))$ and $\mu_1,...,\mu_k\in\lambda\in \Delta_{L,1}$ so that
$$V_\lambda\subset V_{\mu_1}\cdot...\cdot V_{\mu_k}.$$
By \eqref{alomost-concave} we have
$$s_\lambda^{(k)}\geq\sum_{j=1}^k s_{\mu_j}^{(1)}.$$
Since \eqref{rees} is generated by the piece $k=1$, we get for any $k\in\mathbb N_+$ and $\lambda\in \Delta_{L,k}$,
\begin{align}\label{s-k-G}
s^{(k)}_\lambda=\max\{\sum_{i=1}^ks^{(1)}_{\mu_i}|&\mu_i\in\Delta_{L,1}, V_\lambda\subset V_{\mu_1}\cdot...\cdot V_{\mu_k}\}\geq0.
\end{align}

\emph{Step-1.1: Comparison of points of discontinuity.} Let $\Pi_{G/H}=\{\alpha_1,...,\alpha_r\}$ be the  spherical roots. Then each $\alpha_i$ is a non-negative integral linear combination of simple roots in $\Pi_G$. We first show that for any $\lambda,\mu\in \Delta_{L,1}$ satisfying
\begin{align}\label{point-compare}
\lambda=\mu-\sum_{i=1}^rc_i\alpha_i,~0\leq c_i\in\mathbb Q~\text{for all}~i=1,...,r,
\end{align}
it holds
\begin{align}\label{s-compare}
s^{(1)}_\lambda\geq s^{(1)}_\mu.
\end{align}

Otherwise, if $s^{(1)}_\lambda<s^{(1)}_\mu$, by \eqref{F-s-k} we see that
\begin{align}\label{not-in-R}
t^{-s_\mu^{(1)}} V_\lambda\not\subset {\rm R}(\mathcal F).
\end{align}
Let $e_\lambda$ be the highest weight vector in $V_\lambda$. We see that for any $k\in\mathbb N_+$,
\begin{align}\label{power-in}
(t^{-s_\mu^{(1)}}e_\lambda)^{\cdot k}\in t^{-ks_\mu^{(1)}} (V_{k\lambda}).
\end{align}
Note that by \eqref{point-compare},$$\lambda\in{\rm Conv}\{w(\mu)|w\in W^G\}.$$
Hence by \cite[Lemma 1]{Timashev-Sbo}, there is a $k_0\in\mathbb N_+$ such that $V_{k_0\lambda}\subset V_\mu^{\otimes k_0}$, and consequently
$$V_{k_0m\lambda}\subset V_{k_0\mu}^{\otimes m}\subset V_\mu^{\otimes k_0m},~\forall m\in\mathbb N_+.$$
On the other hand, denote by $\chi$ the character of the $H$-action on $L_{eH}$. Then by Corollary \ref{val-cone}, there is an $n_0\in\mathbb N_+$ so that
$$n_0(\mu-\lambda)\in\mathfrak X_+(G)_{\mathbb N\chi}.$$
Combining with Proposition \ref{mult-rule-isotypic}, we have
$$V_{k_0n_0\lambda}\subset V_{\mu}^{\cdot k_0n_0}.$$
We get from \eqref{power-in} that
$$(t^{-s_\mu^{(1)}}e_\lambda)^{\cdot k_0n_0}\in (t^{-s_\mu^{(1)}} V_{\mu})^{\cdot k_0n_0}\subset {\rm R}(\mathcal F).$$
Thus $t^{-s_\mu^{(1)}}e_\lambda$ is integral over ${\rm R}(\mathcal F)$. Since ${\rm R}(\mathcal F)$ is normal, $$t^{-s_\mu^{(1)}}e_\lambda\in {\rm R}(\mathcal F),$$
which contradicts to \eqref{not-in-R} and we conclude \eqref{s-compare}.

\emph{Step-1.2: Construction of $f$.} In view of \eqref{point-compare}, for simplicity we will write ``$\mu\succeq\lambda$" whenever $\lambda$ and $\mu$ satisfy \eqref{point-compare}.
We claim that for each $k\in\mathbb N_+$, $\mu,\lambda_1,...,\lambda_l\in \Delta_{L,k}$ and constants $0\leq c_1,...,c_l\leq1$ satisfying
\begin{align}
&\mu=\sum_{i=1}^lc_i\lambda_i,\label{mu-sum-lambda}
\end{align}
and
\begin{align}
&\sum_{1=1}^lc_i=1,\label{sum-ci}
\end{align}
it always holds
\begin{align}\label{s-k-concave}
s^{(k)}_\mu\geq\sup\{s\in\Gamma(\mathcal F)|s\leq\sum_{i=1}^lc_is^{(k)}_{\lambda_i}\}.
\end{align}

Otherwise, suppose that there are $\mu,\lambda_1,...,\lambda_l\in \Delta_{L,k}$ and $0\leq c_1,...,c_l\leq1$ satisfying satisfying \eqref{mu-sum-lambda}-\eqref{sum-ci} but \eqref{s-k-concave} fails. Then
\begin{align}\label{interpolation}
s^{(k)}_\mu<\sum_{i=1}^lc_i\hat s^{(k)}_{\lambda_i}=:\hat s^{(k)}_\mu\in\Gamma(\mathcal F),
\end{align}
where we can choose $\hat s^{(k)}_{\lambda_i}\in\Gamma(\mathcal F)_\mathbb Q$ with $\hat s^{(k)}_{\lambda_i}\leq s^{(k)}_{\lambda_i}.$ In fact, given any $\hat s^{(k)}_\mu\in\Gamma(\mathcal F)$ so that
$$s^{(k)}_\mu<\hat s^{(k)}_\mu\leq\sum_{i=1}^lc_is^{(k)}_{\lambda_i},$$
we can choose
$$\hat s^{(k)}_{\lambda_i}=s^{(k)}_{\lambda_i}-(\sum_{i=1}^lc_is^{(k)}_{\lambda_i}-\hat s^{(k)}_\mu)$$
to satisfy \eqref{interpolation}. Let $e_\mu\in V_\mu$ be the highest weight vector, as in \eqref{not-in-R}, we have
\begin{align}\label{contra-1}
t^{-\hat s^{(k)}_\mu}e_\mu\not\in {\rm R}(\mathcal F).
\end{align}
On the other hand, we can choose $n_0,m_0\in\mathbb N_+$ so that $n_0c_j\in\mathbb N$ and $m_0\hat s_{\lambda_i}^{(k)}\in\Gamma(\mathcal F)$ for $j=1,...,l$. Then
\begin{align*}
(t^{-\hat s^{(k)}_\mu}e_\mu)^{\cdot n_0m_0}=&t^{-\sum_{j=1}^ln_0m_0c_j\hat s^{(k)}_{\lambda_j}}(e_\mu)^{\cdot n_0m_0}\in t^{-\sum_{j=1}^ln_0m_0c_j\hat s^{(k)}_{\lambda_j}}  V_{n_0m_0\mu}.
\end{align*}
However, by \eqref{mu-sum-lambda}-\eqref{sum-ci},
\begin{align}\label{t-sum-hat-s}
&t^{-\sum_{j=1}^ln_0m_0c_j\hat s^{(k)}_{\lambda_j}}V_{n_0m_0\mu}\notag\\
\subset& (t^{-m_0\hat s^{(k)}_{\lambda_1}} (V_{m_0\lambda_1}))^{\cdot n_0c_1}\cdot...\cdot(t^{-m_0\hat s^{(k)}_{\lambda_l}} (V_{m_0\lambda_l}))^{\cdot n_0c_l}.
\end{align}
Since each $\hat s_{\lambda_i}^{(k)}m_0\leq s_{\lambda_i}^{(k)}m_0\leq s_{m_0\lambda_i}^{(m_0k)}$, by \eqref{F-s-k}, the right-hand side of \eqref{t-sum-hat-s} is contained in ${\rm R}(\mathcal F)$. Hence $t^{-\hat s^{(k)}_\mu}e_\mu$ is integral in ${\rm R}(\mathcal F)$, and we conclude that $t^{-\hat s^{(k)}_\mu}e_\mu\in{\rm R}(\mathcal F)$ since ${\rm R}(\mathcal F)$ is normal, which contradicts with \eqref{contra-1}. Hence \eqref{s-k-concave} is true.

Note that $s_\mu^{(1)}\in\Gamma(\mathcal F)$ for each $\mu\in \Delta_{L,1}$. Take $k=1$ in \eqref{s-k-G} and \eqref{s-k-concave}, we can define a piece-wise linear concave function $f: {\Delta_L}\to\mathbb R$ so that $\min f=0$ and
\begin{align}\label{s-1-G}
s_\mu^{(1)}=\max\{s\in\Gamma(\mathcal F)|s\leq f(\mu)\},~\forall \mu\in\Delta_{L,1}.
\end{align}
Indeed, consider the convex hull of
\begin{align}\label{graph-f}
\Delta_L':=\{(\lambda,s^{(1)}_\lambda)|\lambda\in\Delta_{L,1}\}.
\end{align}
It is a convex polytope in $ \Delta_L\times\mathbb R$ such that
$${\rm Conv}(P'_+)=\{(x,y)\in \Delta_L\times\mathbb R|x\in {\Delta_L},0\leq y\leq f(x)\}$$
for some concave function $f$. Obviously, $s^{(1)}_\mu\leq f(\mu)$ and the equality holds if $\mu$ is a vertex of a domain of linearity.
Combining with \eqref{s-k-concave}, we get \eqref{s-1-G}.
Also, by the normalization condition $\min\Gamma(\mathcal F,1)=0$ and \eqref{s-1-G},
there must be a vertex $\mu$ of $\Delta_L$ so that $s_\mu^{(1)}=0$. Hence $\min f=f(\mu)=0$.

It is easy to see that each domain of linearity of $f$ is a convex polytope with vertices in $\Delta_{L,1}$. In fact, any domains of linearity of $f$ is the projection of a facet of ${\rm Conv}(P'_+)$ on the roof $\{(x,f(x))|x\in {\Delta_L}\}$. But the vertices of such a facet must lie in $P'_+$. Hence it projects to a point in $\Delta_{L,1}$.

Note that $\mathcal V(G/H)$ is the antidominant Weyl chamber of the spherical roots $\Pi_{G/H}$. By \eqref{s-compare}, the gradient of $f$ (in the sense of subdifferential),
\begin{align}\label{nabla-f-1}
\nabla f\subset\mathcal V_\mathbb R(G/H).
\end{align}
Hence by using the $W$-action we can extend $f$ to a $W$-invariant piecewise-linear function, which is globally concave on $\Delta_L$.

\emph{Step-1.3: Proof of \eqref{s-k-G-final}.} It remains to prove that \eqref{s-k-G-final} holds for any $k\in\mathbb N_+$. 
Fix any $\mu\in \Delta_{L,k}$. By \eqref{s-k-G}, we can assume that there are $\lambda_1,...,\lambda_k\in \Delta_{L,1}$ so that
$$s_\mu^{(k)}=\sum_{i=1}^ks^{(1)}_{\lambda_i},~ V_\mu\subset V_{\lambda_1}\cdot...\cdot V_{\lambda_k}.$$
In particular,
\begin{align}\label{sum-lambda-mu}
\sum_{i=1}^k\lambda_i\succeq\mu.
\end{align}

We have
\begin{align*}
\frac1k s_\mu^{(k)}=\frac1k \sum_{i=1}^ks^{(1)}_{\lambda_i}\leq\frac1k\sum_{i=1}^kf({\lambda_i})\leq f(\frac{\sum_{i=1}^k\lambda_i}k).
\end{align*}
By \eqref{sum-lambda-mu} and \eqref{nabla-f-1} we get
\begin{align}\label{s-leq-f}
\frac1k s_\mu^{(k)}\leq f(\frac{\sum_{i=1}^k\lambda_i}k)\leq f(\frac{\mu}k).
\end{align}

Recall that the vertices of each domain of linearity of $f$ is in $\mathfrak M$. Suppose that $\Omega$ is the domain of linearity which contains $\frac\mu k$ whose vertices are $\lambda_1,...,\lambda_l\in\mathfrak M$, then there are non-negative constants $c_1,...,c_l$ such that 
\begin{align*}
\mu=&\sum_{i=1}^lc_ik\lambda_i,~\sum_{1=1}^lc_i=1.
\end{align*}
Since $f$ is linear on $\Omega$, we get
\begin{align}\label{mu-lambda-comb}
k\sum_{j=1}^lc_js^{(1)}_{\lambda_j}=kf(\frac \mu k).
\end{align}
On the other hand, by \eqref{s-k-concave}, we have
\begin{align*}
s^{(k)}_\mu&\geq\sup\{s\in\Gamma(\mathcal F)|s\leq\sum_{j=1}^lc_js^{(k)}_{k\lambda_j}\}\\
&\geq\sup\{s\in\Gamma(\mathcal F)|s\leq k\sum_{j=1}^lc_js^{(1)}_{\lambda_j}\}=\sup\{s\in\Gamma(\mathcal F)|s\leq kf(\frac\mu k)\},
\end{align*}
where we used \eqref{s-k-G} and \eqref{mu-lambda-comb} in the last inequality.

Combining with \eqref{s-leq-f}, up to replacing $f(\cdot)$ by $\frac1{r_0}f(r_0\cdot)$ we get
\begin{align*}
s_{\lambda}^{(k)}=\sup\{s\in\Gamma(\mathcal F)|s\leq kf(\frac\lambda k)\},~\forall \lambda\in \Delta_{L,k}~\text{and}~k\in\mathbb N.
\end{align*}
The relation \eqref{s-k-G-final} then follows from the above equality and the fact that $s^{(k)}_\lambda\in\Gamma(\mathcal F)$. Also, from \emph{Step-1.2} we see that after this scaling, $f$ is still a concave piecewise linear function on $\Delta_L$ with $\nabla f\in\mathcal V_\mathbb R(G/H)$. But its domains of linearity consists of convex polytopes with vertices in $\mathfrak M_\mathbb Q$.


\textbf{Part-2: The inverse direction.}


\emph{Step-2.1: Construction of the $\mathbb R$-test configuration.} Given an $f$ satisfying the assumption of Theorem \ref{G-classify}, we can fix an $r_0\in\mathbb N_+$ so that the domains of linearity of the function
$$\hat f(x)=r_0f(\frac 1{r_0}x): {r_0\Delta_L}\to\mathbb R$$
consist of polytopes with vertices in $\mathfrak M+\chi_0$. Replacing $L$ by $L^{r_0}$, we may assume that $r_0=1$. By concavity and \eqref{nabla-f-1}, it holds
$$(k_1+k_2)f(\frac\mu{k_1+k_2})\geq(k_1+k_2)f(\frac{\lambda_1+\lambda_2}{k_1+k_2})\geq k_1f(\frac{\lambda_1}{k_1})+k_2f(\frac{\lambda_2}{k_2})$$
for any two $\lambda_i\in \Delta_{L,k_i},\, i=1,2$ and $\mu\in \Delta_{L,k_1+k_2}$ so that $\lambda_1+\lambda_2\succeq\mu$.
Combining with \cite[Proposition 4]{Timashev-Sbo}, it is direct to check that \eqref{F-s-k} and \eqref{s-k-G-general-inv} define a $G$-equivariant filtration $\mathcal F$ of $(M,L)$. Then we prove that $\mathcal F$ is an $\mathbb R$-test configuration. We have two case:

\emph{Case-2.1.1.} $\Gamma$ is a discrete subgroup in $\mathbb R$. In this case \eqref{s-k-G-general-inv} is reduced to
\begin{align*}
s_{\lambda}^{(k)}=\max\{s\in\Gamma|s\leq kf(\frac\lambda k)\},~\forall \lambda\in \Delta_{L,k}~\text{and}~k\in\mathbb N.
\end{align*}
Clearly $\Gamma(\mathcal F)\subset\Gamma$ is finitely generated and $\mathcal F$ is an $\mathbb R$-test configuration.

\emph{Case-2.1.2.} $\Gamma$ is a not discrete. Then $\Gamma$ is everywhere dense in $\mathbb R$. In this case \eqref{s-k-G-general-inv} is reduced to
\begin{align*}
s_{\lambda}^{(k)}=kf(\frac\lambda k),~\forall \lambda\in \Delta_{L,k}~\text{and}~k\in\mathbb N.
\end{align*}
Consequently $\Gamma(\mathcal F)$ is generated by a finite set $\{f(\lambda)|\lambda\in \Delta_{L,1}\}$. Again $\mathcal F$ is an $\mathbb R$-test configuration.

\emph{Step-2.2: Normality of the total space.} It remains to prove that the corresponding Rees algebra
$${\rm R}(\mathcal F)=\oplus_{k\in\mathbb N}\oplus_{s\in\Gamma(\mathcal F)}\oplus_{\lambda\in \Delta_{L,k},kf(\frac1k\lambda)\geq s}t^{-s} V_\lambda.$$
is normal. Since
$$({\rm R}(\mathcal F)\subset){\rm R}'=\oplus_{k\in\mathbb N}\oplus_{s\in\Gamma(\mathcal F)}\oplus_{\lambda\in \Delta_{L,k}}t^{-s} V_\lambda$$
is a normal ring, it suffices to show that ${\rm R}(\mathcal F)$ is integrally closed in ${\rm R}'$. This is equivalent to show that any $\bar\sigma\in{\rm R}'\setminus{\rm R}(\mathcal F)$ is not integral in ${\rm R}(\mathcal F)$.

Assume there is some $\bar\sigma\in{\rm R}'\setminus{\rm R}(\mathcal F)$ integral over ${\rm R}(\mathcal F)$. It suffices to deal with $\bar\sigma$ of the following form:
\begin{align}\label{bar-sigma-decop}
\bar\sigma=\sum_{i=1}^{d}t^{-s_i}\sigma_{\tau_i},
\end{align}
where each $\tau_j\in \Delta_{L,k(j)}$ for some $k(j)\in\mathbb N$,
\begin{align*}
(0\not=)\sigma_{\tau_j}\in V_{\tau_j}~\text{and}~ s_j >k(j)f(\frac1{k(j)}\tau_j),~j=1,...,d.
\end{align*}
Since $\bar\sigma$ is integral over ${\rm R}(\mathcal F)$, there is some $q\in\mathbb N_+$ such that for any integer $l\in\mathbb N_+$,
\begin{align}\label{normal-condition}
\bar\sigma^l\in{\rm R}(\mathcal F)+{\rm R}(\mathcal F)\bar\sigma^1+...+{\rm R}(\mathcal F)\bar\sigma^{q}.
\end{align}

Consider the convex hull of $\{w(\tau_i)|w\in W^{G/H},~i=1,...,d\}$. Choose a $\tau\in\{\tau_1,\dots, \tau_d\}$ which is a vertex of this convex hull. Then for any $p\in\mathbb N_+$, $\sum_{s=1}^p\tau_{i_s}\not\succeq p\tau$ whenever there is a $\tau_{i_s}\in\{\tau_1,\dots, \tau_d\}\setminus\{\tau\}$. Take the component $t^{-s}\sigma_{\tau}$ of $\bar{\sigma}$ in its decomposition \eqref{bar-sigma-decop}. 
Suppose that the degree of $t^{-s}\sigma_{\tau}$ is $m$ (that is, $\sigma_\tau\in R_m$). Then
\begin{align}\label{s-lower}
m(f(\frac1m\tau))<s.
\end{align}
For any $l\in\mathbb N_+$, we can decompose $\bar\sigma^l$ as \eqref{bar-sigma-decop}. By Lemma \ref{alg-lem} below, there is a nonzero component of $\bar\sigma^l$ in $t^{-ls} (V_{l\tau})$ with degree $lm$, we denote it by $t^{-ls}\sigma_{l\tau}$.

Decompose $\bar{\sigma}^i, i=0,\dots,q$ as \eqref{bar-sigma-decop}. Then all their components have the form $t^{-w}\sigma_{\gamma}$ for some degree $k$. All such triples $(w,\gamma, k)$ form a finite set $\mathcal S$. Thus
$$\bar\sigma^l\in\sum_{(w,\gamma,k)\in\mathcal S}{\rm R}(\mathcal F)t^{-w}\sigma_{\gamma}.$$

Consider the component $t^{-ls}\sigma_{l\tau}$ of $\bar\sigma^l$. Since $\mathcal S$ is finite, up to passing to a subsequence, there is some $(w,\gamma, k)\in\mathcal S$ such that
$$t^{-ls}\sigma_{l\tau}\in (t^{-w} V_{\gamma}){\rm R}(\mathcal F)_{lm-k},~l\in\mathbb N_+.$$
Thus, we have
$$t^{-ls}\sigma_{l\tau}\in (t^{-w} V_{\gamma})(t^{-r} V_{\mu})$$ for some $t^{-r} V_{\mu}\subseteq {\rm R}(\mathcal F_f)_{lm-k}$, where $ls=w+r,$
\begin{align}\label{gamma+mu}
\gamma+\mu\succeq l\tau,
\end{align}
and
\begin{align*}
r\leq (lm-k)f(\frac{\mu}{lm-k}).
\end{align*}
Here in \eqref{gamma+mu} we used \cite[Proposition 4]{Timashev-Sbo}. Hence
\begin{align*}
s=\frac wl+\frac rl\leq& \frac wl+(m-\frac kl)f(\frac1{lm-k}{\mu})\\
\leq&\frac wl+(m-\frac kl)f(\frac{\tau-\frac1l{\gamma}}{m-\frac kl}),~\text{for sufficiently large}~l\in\mathbb N_+,
\end{align*}
where in the last line we used \eqref{gamma+mu} and \eqref{nabla-f-1}. Sending $l\to+\infty$  we see that $$s\leq mf(\frac1m\tau),$$ which contradicts to \eqref{s-lower}. Hence \eqref{normal-condition} is not true and we conclude that ${\rm R}(\mathcal F)$ is normal.

\end{proof}

To complete the arguments in \emph{Step-2.2}, we need the following

\begin{lem}\label{alg-lem}
Suppose that $\sigma\in V_\mu\subset R_k$
for some $k\in\mathbb N$. Then for any $q\in\mathbb N_+$, $\sigma^q$ has non-zero component in $V_{q\mu}$.
\end{lem}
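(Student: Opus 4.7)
My plan is to extract the $T$-weight $q\mu$ component of $\sigma^q$ and show it is nonzero. Two background facts underlie the argument: (i) the Kodaira ring $R$ is an integral domain, since $X$ is an irreducible variety (a spherical variety admits a dense $G$-orbit) and $L$ is very ample, so $\sigma \neq 0$ already gives $\sigma^q \neq 0$; (ii) in the decomposition $V_\mu^{\cdot q}=\bigoplus_\nu V_{q\mu-\nu}$ furnished by Proposition~\ref{mult-rule-isotypic}, the $T$-weight $q\mu$ appears only in the summand $V_{q\mu}$, because the highest weight of every other summand is strictly below $q\mu$. In particular, the $V_{q\mu}$-isotypic summand of $\sigma^q$ is nonzero if and only if the $T$-weight $q\mu$ part of $\sigma^q$ is nonzero.

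The first step is a $G$-equivariance reduction: I want to replace $\sigma$ by $g\sigma$ for a suitable $g\in G$ so that $\sigma$ has nonzero coefficient on the highest-weight vector $e_\mu\in V_\mu$. This is legitimate because $G$ acts on $R$ by algebra automorphisms, hence $(g\sigma)^q=g\cdot\sigma^q$, and the projection onto the $V_{q\mu}$-isotypic component is $G$-equivariant; so vanishing of that component is a $G$-invariant property of $\sigma$. Such $g$ exists because the linear functional $\langle e_\mu^*,\cdot\rangle$ cannot vanish identically on the orbit $G\cdot\sigma$: otherwise it would vanish on the $G$-span, which equals $V_\mu$ by irreducibility, forcing $e_\mu^*=0$. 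After this reduction I may write $\sigma=c\,e_\mu+\sigma'$ with $c\in\mathbb C^*$ and $\sigma'$ lying in the direct sum of $T$-weight spaces of $V_\mu$ strictly below $\mu$.

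The second step is a $T$-weight expansion: since multiplication in $R$ is commutative, the binomial formula gives
$$\sigma^q=\sum_{j=0}^{q}\binom{q}{j}c^{q-j}e_\mu^{q-j}(\sigma')^j.$$
Any $T$-weight appearing in $(\sigma')^j$ is a sum of $j$ weights of $\sigma'$, each strictly below $\mu$, so for $j\geq 1$ every weight occurring in $e_\mu^{q-j}(\sigma')^j$ is strictly below $q\mu$. Hence the entire $T$-weight $q\mu$ component of $\sigma^q$ equals $c^q e_\mu^q$, which is nonzero by fact (i) together with $c\neq0$. Combined with fact (ii), this forces the $V_{q\mu}$-component of $\sigma^q$ to be nonzero, completing the proof. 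The main obstacle I anticipate is the reduction step: for an arbitrary $\sigma$ with no highest-weight component the simple weight argument collapses, and it is precisely the irreducibility of $V_\mu$ together with the $G$-equivariance of all constructions that makes the reduction work cleanly.
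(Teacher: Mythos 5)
Your proof is correct and follows essentially the same route as the paper's: translate $\sigma$ by a group element so that it acquires a nonzero highest-weight coefficient (using irreducibility of $V_\mu$ and $G$-equivariance of the isotypic projection), then observe that the $T$-weight $q\mu$ part of the $q$-th power is $c^q e_\mu^q\neq 0$ and can only live in the $V_{q\mu}$-summand. The paper leaves the weight-comparison and integral-domain steps implicit; you have simply spelled them out.
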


\begin{proof}
Since $V_\mu$ is an irreducible $G$-representation, there is a $\hat g_0\in G$ so that $\hat g_0(\sigma)$ has non-zero component on the subspace generated by the highest weight vector. Thus for any $q\in\mathbb N_+$, $\hat g_0(\sigma^q)$ has non-zero component in $V_{q\mu}$. We conclude the Lemma since
$\sigma^q=\hat g_0^{-1}(\hat g_0(\sigma^q))$ and the fact that $V_{q\mu}$ is $G$-invariant.
\end{proof}

\begin{rem}\label{f-Gamma-resclae}
In the second part of Theorem \ref{G-classify}, if we replace the data $(f,\Gamma)$ by $(cf,c\Gamma)$ with $c>0$, then the total space and the central fibre of the resulting $\mathbb R$-test configuration leave unchanged. However, the induced vector field on the central fibre is rescaled by $c$.
\end{rem}

\begin{rem}\label{ZTC}
When $f$ is rational, we can take $k$ to be the smallest positive integer so that the set
$$\{k(\lambda,s)|0\leq s\leq f(\lambda),~\lambda\in {\Delta_L}\}$$
is an integral polytope in $(\mathfrak M(G/H)+\chi_0)\times\mathbb Z$ and $\Gamma=\mathbb Z$. Theorem \ref{G-classify} then reduces to the classification theorem of $G$-equivariant normal $\mathbb Z$-test configurations (cf. \cite[Theorem 4.1]{Del-2020-09}).
\end{rem}

\subsection{Equivariant $\mathbb R$-test configurations with reduced central fibre}

We can further classify $G$-equivariant normal $\mathbb R$-test configurations with reduced central fibre by using Theorem \ref{G-classify}.

\begin{thm}\label{G-classify-reduced}
Let $(X,L)$ be a polarized spherical embedding of $G/H$ with moment polytope $\Delta_L$. Then for any $G$-equivariant normal $\mathbb R$-test configuration $\mathcal F$ of $(M,L)$ with reduced central fibre, there is a concave, piecewise-linear function $f\geq\min f=0$ with $\nabla f\in\mathcal V_\mathbb R(G/H)$, and the domains of linearity of $f$ consist of rational polytopes in $\mathfrak M_\mathbb Q$, such that
\begin{align}\label{s-k-G-final-reduced}
s_{\lambda}^{(k)}=kf(\frac\lambda k),~\forall \lambda\in \Delta_{L,k}~\text{and}~k\in\mathbb N,
\end{align}
and vice versa.
\end{thm}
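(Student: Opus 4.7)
I would derive Theorem~\ref{G-classify-reduced} from Theorem~\ref{G-classify} by characterizing reducedness of $\mathcal X_0$ as the assertion that the supremum in \eqref{s-k-G-general-inv} is attained at $kf(\lambda/k)$ for every relevant $(k,\lambda)$, i.e.\ that $kf(\lambda/k)\in\Gamma$.

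For the forward direction, let $\mathcal F$ be a $G$-equivariant normal $\mathbb R$-test configuration of $(X,L)$ with reduced central fibre $\mathcal X_0={\rm Proj}({\rm Gr}(\mathcal F))$, and apply Theorem~\ref{G-classify} to produce $(f,\Gamma)$ with $s_\lambda^{(k)}=\max\{s\in\Gamma\,|\,s\leq kf(\lambda/k)\}$. Let $e_\lambda\in V_\lambda\subset R_k$ be a highest weight vector and $\bar e_\lambda\in {\rm Gr}(\mathcal F)$ its class, of bidegree $(k,s_\lambda^{(k)})$. Because $\nabla f\in\mathcal V(G/H)$ pairs non-positively with every tail vector $\mu$ (Corollary~\ref{val-cone}) and $f$ is concave, one has $f(\lambda/k-\mu/(nk))\geq f(\lambda/k)$, and monotonicity of the supremum yields $s_{n\lambda-\mu}^{(nk)}\geq s_{n\lambda}^{(nk)}$ for every tail vector $\mu$. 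By Lemma~\ref{alg-lem} the $V_{n\lambda}$-component of $e_\lambda^n$ is nonzero, so this component realizes the minimum filtration value and $\mathfrak v_\mathcal F(e_\lambda^n)=s_{n\lambda}^{(nk)}$. Reducedness forces $(\bar e_\lambda)^n\neq 0$ in ${\rm Gr}(\mathcal F)$ for every $n$, i.e.\ $s_{n\lambda}^{(nk)}=n s_\lambda^{(k)}$. If strict inequality $s_\lambda^{(k)}<kf(\lambda/k)$ held, I would pick any positive $\gamma\in\Gamma_+(\mathcal F)$ and choose $n$ with $n(kf(\lambda/k)-s_\lambda^{(k)})>\gamma$; then $n s_\lambda^{(k)}+\gamma\in\Gamma\cap(n s_\lambda^{(k)},nk f(\lambda/k)]$, forcing $s_{n\lambda}^{(nk)}>n s_\lambda^{(k)}$, a contradiction. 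Hence $s_\lambda^{(k)}=kf(\lambda/k)$.

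For the converse, given $f$ as in the statement, I take $\Gamma$ to be the abelian subgroup of $\mathbb R$ generated by $\{kf(\lambda/k)\,|\,\lambda\in k\Delta_L\cap(\mathfrak M+k\chi_0),\ k\in\mathbb N_+\}$; this is finitely generated because $f$ is piecewise linear with finitely many rational polytope domains of linearity, so every value is a $\mathbb Q$-linear expression in a finite set of affine data. With $s_\lambda^{(k)}:=kf(\lambda/k)\in\Gamma$, the supremum in \eqref{s-k-G-general-inv} is automatically attained, and Theorem~\ref{G-classify} furnishes a $G$-equivariant normal $\mathbb R$-test configuration $\mathcal F$. It remains to verify reducedness of $\mathcal X_0$, i.e.\ that ${\rm Gr}(\mathcal F)$ has no nilpotent in positive $R_k$-degree. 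Suppose $\bar\sigma\neq 0$ were such a nilpotent, bihomogeneous of bidegree $(k,s)$; write $\sigma=\sum_\lambda\sigma_\lambda$ with $\sigma_\lambda\in V_\lambda$ and $kf(\lambda/k)=s$ whenever $\sigma_\lambda\neq 0$. I pick $\tau$ a vertex of ${\rm Conv}\{w(\lambda)\,|\,w\in W^{G/H},\ \sigma_\lambda\neq 0\}$. Then by the vertex-selection argument from \emph{Step-2.2} of the proof of Theorem~\ref{G-classify}, combined with Lemma~\ref{alg-lem}, $\sigma^n$ has a nonzero $V_{n\tau}$-component in $R_{nk}$; since $s_{n\tau}^{(nk)}=nkf(\tau/k)=ns$, this shows $\bar\sigma^n\neq 0$ in ${\rm Gr}(\mathcal F)$, contradicting nilpotency.

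The main obstacle is the converse reducedness check: one must ensure that the $V_{n\tau}$-component of $\sigma^n$ is not cancelled by cross contributions coming from tuples $(\lambda_1,\dots,\lambda_n)$ with $\sum\lambda_i\succeq n\tau$ and some $\lambda_i\neq\tau$. This is precisely handled by choosing $\tau$ as a vertex of the $W^{G/H}$-symmetric convex hull, following the extremality argument in \emph{Step-2.2} of the proof of Theorem~\ref{G-classify}. The forward direction is more routine once the inequality $f(\lambda/k-\mu/(nk))\geq f(\lambda/k)$ is derived from $\nabla f\in\mathcal V(G/H)$ and concavity.
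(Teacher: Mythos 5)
Your proposal is correct and follows essentially the same route as the paper: your necessity argument is the paper's \emph{Part-1} (powers of a highest weight vector detect any gap between $s_\lambda^{(k)}$ and $kf(\lambda/k)$ once a multiple of the gap exceeds a positive element of $\Gamma(\mathcal F)$, the trivial case $\Gamma(\mathcal F)=\{0\}$ forcing $f\equiv 0$ anyway), and your sufficiency argument is the paper's \emph{Part-2} together with Remark~\ref{RTC-reduced}, your vertex-selection step being a slightly more explicit version of the paper's reduction (via $G$-stability and bihomogeneity of the nilradical) to highest weight vectors of a single isotypic component. One point to tighten: ``every value is a $\mathbb Q$-linear expression in a finite set of affine data'' does not by itself imply that $\Gamma$ is finitely generated (compare $\mathbb Z[1/2]\subset\mathbb Q$); instead observe that on a domain of linearity where $f=\ell+c$ with $\ell$ linear, one has $kf(\lambda/k)=\ell(m)+k\bigl(\ell(\chi_0)+c\bigr)$ for $\lambda=m+k\chi_0$, $m\in\mathfrak M$, so all values lie in the finitely generated group $\sum_\Omega\bigl(\ell_\Omega(\mathfrak M)+\mathbb Z(\ell_\Omega(\chi_0)+c_\Omega)\bigr)$.
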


In the following we will call $f$ in \eqref{s-k-G-final-reduced} the \emph{function associated to $\mathcal F$} and denote $\mathcal F=\mathcal F_f$.

\begin{proof}[Proof of Theorem \ref{G-classify-reduced}]
We divide the proof in two parts.

\textbf{Part-1: Necessity of \eqref{s-k-G-final-reduced}.}
Suppose that $\mathcal F$ is given so that ${\rm Gr}(\mathcal F)$ defined by \eqref{GrF-def} contains no nilpotent element. Let $f$ be the function defined in Theorem \ref{G-classify}.
We will show \eqref{s-k-G-final-reduced} holds.
Otherwise, by \eqref{s-k-G-final} there is a $\lambda_0\in \Delta_{L,l_0}$ for some $l_0\in\mathbb N_+$ so that
$$(\Gamma(\mathcal F)\ni)s_{\lambda_0}^{(l_0)}<l_0f(\frac{\lambda_0}{l_0}).$$
Let $\sigma_0\in V_{\lambda_0}$ be a highest weight vector. Then $\sigma_0^{\otimes k}\in V_{k\lambda_0}$ has (real) weight $t^{-ks_{\lambda_0}^{(l_0)}}$ for any $k\in\mathbb N_+$.

On the other hand, choose a set of positive generators $\{s_1,...,s_{r_{\mathcal F}}\}$ of ${\mathcal F}$ and $s_M:=\max\{s_1,...,s_{r_{\mathcal F}}\}$. Then there is a $k_0\in\mathbb N_+$ so that
$$k_0(l_0f(\frac{\lambda_0}{l_0})-s_{\lambda_0}^{(l_0)})\geq s_M.$$
Hence there must be an $s'\in\Gamma(\mathcal F)$ so that $k_0s_{\lambda_0}^{(l_0)}<s<k_0l_0f(\frac{\lambda_0}{l_0})$ and consequently
$$s_{k_0\lambda_0}^{(k_0l_0)}>k_0s_{\lambda_0}^{(l_0)}.$$
Hence $V_{k_0\lambda_0}\subset \mathcal F^{>k_0s_{\lambda_0}^{(l_0)}}R_{k_0l_0}$ and in \eqref{GrF-def} the piece
$$\mathcal F^{k_0s_{\lambda_0}^{(l_0)}}R_{k_0l_0}/\mathcal F^{>k_0s_{\lambda_0}^{(l_0)}}R_{k_0l_0}$$
contains no $ (V_{k_0\lambda_0})$-factor. Hence $\sigma_0^{\otimes k}$ descends to $0$ in ${\rm Gr}(\mathcal F)$. In other words, $\sigma_0^{\cdot k}=0$ in ${\rm Gr}(\mathcal F)$ and $\sigma_0$ is nilpotent, a contrdiction.

\textbf{Part-2: Sufficiency of \eqref{s-k-G-final-reduced}.} Suppose that \eqref{s-k-G-final-reduced} holds. Define
$$\mathcal F^sR_k:=\oplus_{\lambda\in \Delta_{L,k},kf(\lambda/k)\geq s} V_\lambda,~\forall k\in\mathbb N_+.$$
We will show \eqref{GrF-def} contains no nilpotent element. Otherwise, there are a $\sigma\in V_{\lambda_0}$ for some $\lambda_0\in  \Delta_{L,l_0}$ and some $k_0\in\mathbb N_+$ so that $\sigma_0^{\cdot k}=0$ in ${\rm Gr}(\mathcal F)$.

By Lemma \ref{alg-lem}, we can assume that $\sigma$ has non-zero component on the direction of highest weight vector. Hence $\sigma_0^{\cdot k}$ has non-zero component in  $\sigma\in V_{k_0\lambda_0}$. It must hold
$$\mathcal F^{k_0s_{\lambda_0}^{(l_0)}}R_{k_0l_0}/\mathcal F^{>k_0s_{\lambda_0}^{(l_0)}}R_{k_0l_0}$$
contains no $ (V_{k_0\lambda_0})$-factor. This implies
$$s_{k_0\lambda_0}^{(k_0l_0)}>k_0s_{\lambda_0}^{(l_0)},$$
a contradiction to \eqref{s-k-G-final-reduced}.
\end{proof}

\begin{rem}\label{RTC-reduced}
Given any $f$ satisfying the assumption of Theorem \ref{G-classify-reduced}, we can construct $\mathcal F_f$ by choosing the following data in Theorem \ref{G-classify}:
\begin{itemize}
\item $k$ is the smallest positive integer so that the domains of linearity of $f$ in $\Delta_L$ consists of polytopes with vertexes in $\mathfrak M(G/H)+\chi_0$.
\item $\Gamma$ is the group generated by $\{s_{\lambda}^{(k)}|s_{\lambda}^{(k)}~\text{is given by}~\eqref{s-k-G-final-reduced}\}$.
\end{itemize}
It is direct to check that $\Gamma(\mathcal F_f)=\Gamma$.
\end{rem}

Now we are going to classify $G$-equivariant special $\mathbb R$-test configurations. We have the following variant of Proposition \ref{RTC-appr} for spherical varieties,

\begin{prop}\label{F-Lambda-appro}
Let $(X,L)$ be a polarized $G$-spherical variety with moment polytope $\Delta_L$. Suppose that $\mathcal F_\Lambda$ is the $G$-equivariant normal $\mathbb R$-test configuration of $(X,L)$ associated to the function \eqref{f-affine}. Then there is a rational $\Lambda'\in\mathcal V(G/H)$ so that $\mathcal F_{\Lambda'}$ has the same central fibre with $\mathcal F_\Lambda$.
\end{prop}

\begin{proof}
As in the proof of Proposition \ref{RTC-appr}, suppose that $R(X,L)$ is generated by $R_1$ and $\mathcal F_\Lambda$ by the piece $\mathcal F_\Lambda R_{1}$. Recall that the integral points in the 1-st piece of the Okunkov body
$$\cup_{\lambda\in\Delta_{L,1}}(\lambda,\frac1k\Delta^{\rm GT}(k \lambda))$$
are in one-one correspondence with a set of sections in $R_1$ that also forms a basis. Let $\mathbf T$ be the torus induced by $\mathcal F_\Lambda$ on $R_1=\oplus_{\lambda\in\Delta_{L,1}}V_\lambda$. Then the $\mathbf T$-weight of an integral point (hence a section in $R_1$) in $(\lambda,\frac1k\Delta^{\rm GT}(k \lambda))$ is $\Lambda(\lambda)$.

Let $\mathfrak Y_1:=\Delta_{L,1}$ and $\mathfrak Y_k$ be the sum of any $k$ elements (possibly with repeated terms) of $\mathfrak Y_1$. From the proof of Proposition \ref{RTC-appr}, it suffices to solve the system the system \eqref{eq-Lambda-1}-\eqref{eq-Lambda-2} on the rational cone $\mathcal V_\mathbb R(G/H)$. Again, the above system of $\Lambda'$ defines a rational cone $\mathfrak Z\subset\mathcal V_\mathbb R(G/H)$ that contains $\Lambda$. Choose any rational $\Lambda'\in\mathfrak Z$ we get the Proposition.
\end{proof}

A geometric proof of Proposition \ref{F-Lambda-appro} has been given in \cite{Li-Wang}.

\begin{prop}\label{R-TC-Lambda}
Let $(X,L)$ be a polarized spherical embedding of $G/H$.
Then for any $\Lambda\in \mathcal V_\mathbb R(G/H)$ there is a $G$-equivariant special $\mathbb R$-test configuration $\mathcal F_{\Lambda}$ of $(X,L)$,
so that $\mathcal X_0$ is a $G$-spherical variety and admits an action of the torus $\overline{\exp(t\Lambda)}\subset G$.
Conversely, for any $G$-equivariant special $\mathbb R$-test configuration $\mathcal F$ of $(X,L)$, there is a $\Lambda\in \mathcal V_\mathbb R(G/H)$ such that $\mathcal F=\mathcal F_{\Lambda}$.
\end{prop}
\begin{proof}

We first show that a $G$-equivariant normal $\mathbb R$-test configuration $\mathcal F$ has irreducible central fibre if and only if the associated function $f$ given by Theorem \ref{G-classify-reduced} is affine on ${\Delta_L}$.

Assume that $\mathcal X_0$ is irreducible. Then ${\rm Gr}(\mathcal F)$ is integral.
We show that $f$ is affine.
Otherwise, we can take two domains of linearity $Q_1,Q_2\subset\Delta_L$ so that they intersect along a common facet.
Take $\lambda_i\in Q_i\cap(\mathfrak M_\mathbb Q+\chi_0)$ so that the line segment $\overline{\lambda_1\lambda_2}\subset Q_1\cup Q_2$.
Up to replacing $\Delta_L$ by some $k_0\Delta_L$, we can assume $\lambda_i\in Q_i\cap(\mathfrak M+k_0\chi_0)$ for $i=1,2$.

Let $\sigma_i\in V_{\lambda_i}$ be a highest weight vector. Then $\sigma_i$ has real weight $t^{-s_{\lambda_i}^{(1)}}$. Consequently, $\sigma_1\otimes \sigma_2\in V_{\lambda_1+\lambda_2}$ has real weight $t^{-s_{\lambda_1}^{(1)}-s_{\lambda_2}^{(1)}}$. On the other hand, for $\lambda_1+\lambda_2\in\overline{2\Delta_L}\cap(\mathfrak M+2\chi_0)=\Delta_{L,2}$, by \eqref{s-k-G-final-reduced},
$$s_{\lambda_1+\lambda_2}^{(2)}=2f(\frac12(\lambda_1+\lambda_2))>s_{\lambda_1}^{(1)}+s_{\lambda_1}^{(2)},$$
where the last inequality follows from the concavity of $f$ and the fact that $\lambda_i$'s lie in different domains of linearity. Hence $\sigma_1\cdot\sigma_2=0$ in ${\rm Gr}(\mathcal F)$. A contradiction to the assumption that ${\rm Gr}(\mathcal F)$ is integral.

Conversely, assume that $f$ is affine. By Theorem \ref{G-classify-reduced}
\begin{align}\label{f-affine}
f_\Lambda=\Lambda(\lambda)-\min_{\mu\in\Delta_L}\Lambda(\mu),
\end{align}
for some $\Lambda\in\mathcal V_\mathbb R(G/H)$. We will show that ${\rm Gr}(\mathcal F)$ is integral.
Otherwise, there are $(0\not=)\sigma_i\in V_{\lambda_i},i=1,2$ so that $\sigma_1\cdot\sigma_2=0$ in ${\rm Gr}(\mathcal F)$.
By Lemma \ref{alg-lem}, we can assume that each $\sigma_i$ is a highest weight vector.
Assume that $\lambda_i\in\overline{k_i\Delta_L}$. Then by \eqref{s-k-G-final-reduced}, $\sigma_i$ has real weight $t^{-k_if(\lambda_i/k_i)}$. Consequently, $\sigma_1\otimes \sigma_2\in V_{\lambda_1+\lambda_2}$ has real weight $t^{-k_1f(\lambda_1/k_1)-k_2f(\lambda_2/k_2)}$ with
$$k_1f(\frac{\lambda_1}{k_1})+k_2f(\frac{\lambda_2}{k_2})=(k_1+k_2)f(\frac{\lambda_1+\lambda_2}{k_2+k_2}).$$
Here we used the fact that $f$ is affine. Note that the right-hand side is just the real weight of the $V_{\lambda_1+\lambda_2}$-piece in ${\rm Gr}(\mathcal F)$. Hence $\sigma_1\cdot\sigma_2\not=0$, a contradiction.

We have seen that $\mathcal X_0$ is irreducible if and only if $f$ is affine. It remains to show that the central fibre of an arbitrary $\mathcal F_\Lambda$ is normal. By Proposition \ref{F-Lambda-appro}, the central fibre of $\mathcal F_\Lambda$ can be realized as that of a $G$-equivariant normal $\mathbb Z$-test configuration $\mathcal F_{\Lambda'}$. On the other hand, by \cite[Theorem 15.20]{Timashev-book}, each irreducible component of the central fibre of a $G$-equivariant normal $\mathbb Z$-test configuration is normal. Hence the central fibre $\mathcal X_0$ of $\mathcal F_{\Lambda'}$ is normal.

Note that by \eqref{GrF-def} and \eqref{s-k-G-final-reduced}, the vector field induced by $\mathcal F_\Lambda$ is precisely $-\Lambda$. Hence we get the Proposition.

\end{proof}
\begin{rem}\label{R-val}
From the proof of \cite[Lemma 2.1 and Theorem 3.3]{Timashev-survey}, we see that $G$-invariant $\mathbb R$-valuations are mapped bijectively to points in $\mathcal V_\mathbb R(G/H)$. Moreover, it is also direct to see that up to a translation, the filtration induced by the $\mathbb R$-valuation $\Lambda$ is precisely $\mathcal F_\Lambda$.

In fact, each $\Lambda\in\mathcal V_\mathbb R(G/H)$ induces a quasimonomial valuation. Using \cite[Section 2.6]{Fulton}, for each $G$-invariant $\mathbb R$-valuation $\Lambda\in\mathcal V_\mathbb R(G/H)$, we can always construct a set $\{v_1,...,v_r\}\subset\mathcal V(G/H)$ which is a $\mathbb Z$-basis of $\mathfrak N(G/H)$, such that $\Lambda$ lies in the cone $\mathcal C$ generated by $\{v_1,...,v_r\}$. In fact, we divided $\mathcal V_\mathbb R(G/H)$ into several simplicial subcones generated by a $\mathbb Z$-basis of $\mathfrak N(G/H)$ and take one of those cones containing $\Lambda$.
By \cite[Proposition 3.3]{Pasquier-survey}, the colourless cone $(\mathcal C,\emptyset)$ defines a smooth, toroidal model $\tilde X$, and it is direct to check that $\Lambda$ is a monomial valuation on $\tilde X$. Hence $\Lambda$ is a quasimonomial valuation on $G/H$.
\end{rem}

\section{Central fibre of a $G$-equivariant special $\mathbb R$-test configuration of $\mathbb Q$-Fano spherical varieties}
In this section, we consider the central fibre of a $G$-equivariant special $\mathbb R$-test configuration of a polarized spherical variety constructed in Proposition \ref{R-TC-Lambda}. 
Let $f(y)=\Lambda(y)-\min_{y\in\Delta_L}\Lambda(y)$ be an affine function on $\Delta_L$ with gradient $\Lambda\in\mathcal V(G/H)$ and $\mathcal F$ the special ${G}$-equivariant $\mathbb{R}$-test configuration $\mathcal{F}$ of $(X, K_X^{-1})$ with points of discontinuity
\begin{align}\label{point-dis}
s_{\lambda}^{(k)}=\Lambda(\lambda)-k\min_{y\in\Delta_L}\Lambda(y), \forall \lambda \in \Delta_{L,k} \text { and } k \in \mathbb{N}.
\end{align}
It is also know that the central fibre $\mathcal X_0$ is a polarized projective spherical embedding of some spherical homogeneous space $G/H_0$ with moment polytope $\Delta_L$ (cf. \cite[Proposition 4]{Popov-1986} or using the approximating sequence constructed by Proposition \ref{RTC-appr}). In particular, in the case when $X$ is $\mathbb Q$-Fano and $L=K_X^{-1}$, $\mathcal X_0$ can be recovered from the above data.

\subsection{Homogenous spherical datum of $G/H_0$}
We will first determine $G/H_0$. By \cite[Theorem 30.22]{Timashev-book} (based on \cite{Losev,CF-2}, see also \cite{BraP1,BraP2, Avdeev-CF}), it suffices to determine its homogeneous spherical datum. Denote by $\varrho_0(\cdot)$ the corresponding map \eqref{varrho-def} of $G/H_0$. We have 
\begin{prop}\label{homo-sph-datum-G/H0}
Let $(X,L)$ be a polarized $G$-spherical variety with moment polytope $\Delta_L$. Suppose that $\mathcal F_\Lambda$ is a $G$-equivariant special $\mathbb R$-test configuration of $(X,L)$. Then the central fibre $\mathcal X_0$ is a spherical embedding of $G/H_0$ with moment polytope $\Delta=\Delta_L$.
The homogeneous spherical datum $(\mathfrak M(G/H_0)$, $\Pi_{G/H_0}$, $\Pi_{G/H_0}^p$, $\mathcal D^a(G/H_0))$ of $G/H_0$ consists of
\begin{align*}
\mathfrak M(G/H_0)=\mathfrak M(G/H),\,
\Pi_{G/H_0}=\Pi_{G/H}\cap\Lambda^\perp,\,
\Pi_{G/H_0}^p=\Pi_{G/H}^p,
\end{align*}
and for any $D_0\in\mathcal D(G/H_0;\alpha)$ with $\alpha\in\Pi_{G/H_0}(\subset\Pi_{G/H})$, there is a unique $D\in\mathcal D^a(G/H)\cap\mathcal D(G/H,\alpha)$ so that $\varrho_0(D_0)=\varrho(D)$.
\end{prop}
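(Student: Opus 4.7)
I would read off the homogeneous spherical datum of $G/H_0$ directly from the multiplicative structure of the associated graded ring ${\rm Gr}(\mathcal F_\Lambda)$, which is the Kodaira ring of $(\mathcal X_0, L_0)$.

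First, I would make explicit the multiplication on ${\rm Gr}(\mathcal F_\Lambda)$. As a $G$-module each graded piece is identified with $R_k$ since $\mathcal F$ is $G$-equivariant, so the isotypic decomposition \eqref{H0-M} carries over. Combining \eqref{point-dis} with Proposition \ref{mult-rule-isotypic}, the product $V_{\lambda_1}\cdot V_{\lambda_2}$ inside ${\rm Gr}(\mathcal F_\Lambda)$ retains only those summands $V_{\lambda_1+\lambda_2-\mu}$ occurring in the product in $R$ for which $\Lambda(\mu)=0$; all other tail vectors have $\Lambda(\mu)<0$, so they sit at strictly higher filtration level and are killed in the associated graded by \eqref{GrF-def}. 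Because each isotypic component has multiplicity at most one, $\mathcal X_0$ is $G$-multiplicity free, hence spherical, with some open $G$-orbit $G/H_0$. The $B$-eigenweights appearing in ${\rm H}^0(\mathcal X_0, L_0^k)$ coincide with those in $R_k$, yielding $\mathfrak M(G/H_0)=\mathfrak M(G/H)$ and $\Delta=\Delta_L$.

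Next, I would derive $\Pi_{G/H_0}$ by applying Corollary \ref{val-cone} to $(\mathcal X_0, L_0)$: its tail-vector semigroup $\mathfrak K_0$ equals $\mathfrak K\cap\Lambda^\perp$ by the previous step, where $\mathfrak K$ denotes the tail-vector semigroup of $R$. Since $\Lambda\in\mathcal V(G/H)=-\mathfrak K^\vee$, one has $\Lambda\le 0$ on $\mathfrak K$, so $\mathfrak K_0$ is a face of $\mathfrak K$ whose edge generators are precisely the elements of $\Pi_{G/H}\cap\Lambda^\perp$. This yields $\Pi_{G/H_0}=\Pi_{G/H}\cap\Lambda^\perp$.

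For $\Pi_{G/H_0}^p$ and the type-$a$ colors, I would pass to the affine cones of $X$ and $\mathcal X_0$, which are spherical $G\times\mathbb C^*$-varieties whose coordinate rings agree as $G$-modules. Since $\alpha\in\Pi^p$ is equivalent to every $B$-eigenfunction being a $P_\alpha$-eigenfunction, and the $P_\alpha$-character of the highest weight vector of $V_\lambda$ is intrinsic to the $G$-module $V_\lambda$, one reads off $\Pi_{G/H_0}^p=\Pi_{G/H}^p$. For each $\alpha\in\Pi_{G/H_0}$ the associated color of type $a$ then satisfies $\varrho(D_\alpha)=\alpha^\vee$ by the general classification of colors of a spherical homogeneous space.

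\textbf{Main obstacle.} The last step is the delicate one, because $\Pi^p$ and the colors are invariants of divisors on $G/H_0$ rather than purely of the $G$-module structure of its Kodaira ring, and the equalities must be extracted with care from the affine-cone comparison. To close the argument rigorously one can verify that $(\mathfrak M(G/H), \Pi_{G/H}\cap\Lambda^\perp, \Pi_{G/H}^p, \mathcal D^a(G/H))$ satisfies Luna's axioms for a homogeneous spherical datum, and then invoke Losev's uniqueness theorem (already used in Theorem \ref{main-thm-5}) to pin down $G/H_0$ up to $G$-equivariant isomorphism.
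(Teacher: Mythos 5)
Your proposal is correct and follows essentially the same route as the paper: the paper likewise reads the datum off the multiplication in ${\rm Gr}(\mathcal F_\Lambda)$, showing only tail vectors $\mu$ with $\Lambda(\mu)=0$ survive (giving $\mathcal V(G/H_0)$ and hence $\Pi_{G/H_0}=\Pi_{G/H}\cap\Lambda^\perp$, with $\mathfrak M$ and $\Delta_L$ unchanged), then passes to the affine cone to match colours via the highest-weight vectors $f_{\lambda_D}$ and their $P_\alpha$-semiinvariance, concluding with Losev's uniqueness theorem. Your fallback of checking Luna's axioms directly is a minor variant but not a different argument.
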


Clearly, there are only finitely many choices of the datum $(\mathfrak M(G/H_0)$, $\Pi_{G/H_0}$, $\Pi_{G/H_0}^p$, $\mathcal D^a(G/H_0))$.
By the deep uniqueness theorem of \cite{Losev} (see also \cite[Theorem 30.22]{Timashev-book}), there are only finitely many choices of $G/H_0$.

\begin{coro}\label{G/H0-finite}
If $\Lambda_i,~i=1,2$ are two vectors that lie in the relative interior of a same face of $\mathcal V_\mathbb R(G/H)$, then the central fibres $X_{0,i}$ of $\mathcal F_{\Lambda_i},~i=1,2$ are projective completion of a same $G$-spherical homogenous space.
\end{coro}

In the following we prove Proposition \ref{homo-sph-datum-G/H0}. We first compute the lattice of $B$-semiinvariant functions $\mathfrak M(G/H_0)$,

\begin{prop}\label{M(G/H0)}
The lattice $\mathfrak M(G/H_0)=\mathfrak M(G/H).$
\end{prop}
\begin{proof}
From \eqref{GrF-def} we see that for $\mathcal X_0$, the moment polytope $\Delta_{\mathcal L_0,k}(\mathcal X_0)=\Delta_{L,k}(X)$ for all $k\in\mathbb N$. In particular, if there is a section $s_0\in{\rm H}^0(X,L)^{(B)}_{\chi_0}$ for some $\chi_0\in\mathfrak X(B)$, then there also exists a  $\bar s_0\in{\rm H}^0(\mathcal X_0,\mathcal L_0)^{(B)}_{\chi_0}$. On the other hand, by \eqref{Delta-L-k}, for sufficiently large $k$, the polytope of the divisor $k{\rm div}(s_0)$, $$\Delta_{k{\rm div}(s_0)}=\Delta_{\mathcal L_0,k}(\mathcal X_0)-k\chi_0$$
is a polytope in $\mathfrak M(G/H)$ that contains a basis of $\mathfrak M(G/H)$. Similarly, $$\Delta_{k{\rm div}(\bar s_0)}=\Delta_{\mathcal L,k}(\mathcal X)-k\chi_0$$ also contains a basis of $\mathfrak M(G/H_0)$. Hence
\begin{align*}
\mathfrak M(G/H_0)=&{\rm Span}_{\mathbb Z}\{\Delta_{\mathcal L_0,k}(\mathcal X_0)-k\chi_0\}\\
=&{\rm Span}_{\mathbb Z}\{\Delta_{L,k}(X)-k\chi_0\}=\mathfrak M(G/H).
\end{align*}

\end{proof}

Then we compute the spherical roots of $G/H_0$.
\begin{prop}\label{val-cone-G/H0-prop}
Let $\Pi_{G/H}:=\{\alpha_1,...,\alpha_r\}$ be the spherical roots of $G/H$. Suppose that $\Lambda$ satisfies
\begin{align}
\alpha_i(\Lambda)=&0,~i=1,...,i_0;\label{zero-roots}\\
\alpha_i(\Lambda)<&0,~i=i_0+1,...,r.\label{non-zero-roots}
\end{align}
Then the valuation cone of $G/H_0$ is
\begin{align}\label{val-cone-G/H0}
\mathcal V(G/H_0)=\{x\in\mathfrak N_\mathbb R(G/H)|\alpha_i(x)\leq0,~i=1,...,i_0\}.
\end{align}
Consequently, the spherical roots of $G/H_0$ is
\begin{align}\label{sip-sph-rt-G/H0}
\Pi_{G/H_0}=\{\alpha_i|i=1,...,i_0\}.
\end{align}
\end{prop}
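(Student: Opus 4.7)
The plan is to read off $\mathcal V(G/H_0)$ from the multiplication rule of the Kodaira ring ${\rm Gr}(\mathcal F_\Lambda)$ of $\mathcal X_0$, combining Proposition \ref{mult-rule-isotypic} with Corollary \ref{val-cone}. Up to replacing $L$ by a sufficiently high power, I may assume the polarization induced on $\mathcal X_0$ is very ample, which does not affect $\mathcal V(G/H_0)$ or $\Pi_{G/H_0}$.

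First I would analyze how multiplication degenerates in the associated graded ring. For highest weight vectors $\sigma_{\lambda_i}\in V_{\lambda_i}$ with $\lambda_i\in k_i\Delta_L\cap(\mathfrak M+k_i\chi_0)$, Proposition \ref{mult-rule-isotypic} gives
\[
V_{\lambda_1}\cdot V_{\lambda_2}=\bigoplus_\mu V_{\lambda_1+\lambda_2-\mu}
\]
in $R(X,L)$, the sum being over a certain finite set of tail vectors $\mu$ expressible as non-negative combinations of simple roots of $G$. By \eqref{point-dis} the grading of $V_{\lambda_i}$ in ${\rm Gr}(\mathcal F_\Lambda)$ is $\Lambda(\lambda_i)-k_i\min\Lambda$, so the product carries total grading $\Lambda(\lambda_1+\lambda_2)-(k_1+k_2)\min\Lambda$, whereas the summand $V_{\lambda_1+\lambda_2-\mu}$ sits in grading $\Lambda(\lambda_1+\lambda_2-\mu)-(k_1+k_2)\min\Lambda$. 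Hence the only summands that survive in ${\rm Gr}(\mathcal F_\Lambda)$ are those with $\Lambda(\mu)=0$; using Lemma \ref{alg-lem} one confirms that these summands are not annihilated, so the tail vectors of $G/H_0$ (in the sense of Corollary \ref{val-cone}) are exactly the tail vectors $\mu$ of $G/H$ satisfying $\Lambda(\mu)=0$.

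Next I would apply Corollary \ref{val-cone} to $\mathcal X_0$ as a polarized spherical embedding of $G/H_0$: $\mathcal V(G/H_0)$ is minus the dual of the cone generated by the tail vectors of $G/H_0$. Since $(-\mathcal V(G/H))^\vee$ is the cone spanned by the tail vectors of $G/H$ and its edges are generated by $\Pi_{G/H}=\{\alpha_1,\dots,\alpha_r\}$, the face cut out by $\Lambda^\perp$ is the cone generated exactly by those $\alpha_i$ lying in $\Lambda^\perp$, namely $\alpha_1,\dots,\alpha_{i_0}$ by \eqref{zero-roots}--\eqref{non-zero-roots}. Dualizing gives \eqref{val-cone-G/H0}, and since the simple spherical roots are the primitive generators of the edges of $(-\mathcal V(G/H_0))^\vee$, we read off \eqref{sip-sph-rt-G/H0}.

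The main obstacle is the precise identification of the tail-vector cone of $G/H_0$ with the face $\Lambda^\perp\cap(-\mathcal V(G/H))^\vee$. The inclusion ``$\subseteq$'' is clear from the grading computation above, but ``$\supseteq$'' requires showing that every such $\mu$ is actually realized as a tail vector in some product in ${\rm Gr}(\mathcal F_\Lambda)$, not merely that it appears in the ambient product in $R$. Here one exploits that $\mathcal X_0$ is spherical so isotypic components have multiplicity one (by \cite[Theorem 25.1]{Timashev-book}), combined with Lemma \ref{alg-lem}, to guarantee that the highest weight vector in $V_{\lambda_1+\lambda_2-\mu}$ does not vanish in the graded product whenever $\Lambda(\mu)=0$. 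A minor additional subtlety is that a tail vector $\mu$ need not itself be a multiple of a single $\alpha_i$, but since $\{\alpha_i\}$ generates the edges of the tail-vector cone, passing to the face $\Lambda^\perp$ and its generating edges yields exactly $\{\alpha_1,\dots,\alpha_{i_0}\}$, closing the argument.
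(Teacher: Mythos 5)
Your proposal is correct and follows essentially the same route as the paper: both compute the $\Lambda$-grading of the tail components $V_{\lambda_1+\lambda_2-\mu}$ in the product of isotypic pieces, conclude that exactly the tails with $\Lambda(\mu)=0$ survive in ${\rm Gr}(\mathcal F_\Lambda)$, and then recover $\mathcal V(G/H_0)$ as minus the dual of the resulting face $\Lambda^\perp\cap(-\mathcal V(G/H))^\vee$, whose edges are generated by $\alpha_1,\dots,\alpha_{i_0}$. Your explicit attention to the ``$\supseteq$'' direction (that tails with $\Lambda(\mu)=0$ are genuinely realized, via multiplicity one and Lemma \ref{alg-lem}) and to the fact that a tail need not be a multiple of a single $\alpha_i$ matches, and slightly sharpens, the paper's argument.
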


\begin{proof}
Suppose that $\lambda_j\in \Delta_{L,k_j}$ for $j=1,2$. Then in the Kodaira ring of $X$,
\begin{align}\label{mult-rule}
V_{\lambda_1}\cdot V_{\lambda_1}=V_{\lambda_1+\lambda_2}\oplus(\oplus_{\beta} V_{\lambda_1+\lambda_2-\beta}),
\end{align}
where each $\beta\not=0$ is a non-negative combination of simple roots in $\Phi^G_+$. Fix any $V_{\lambda_1+\lambda_2-\beta_0}$ appeared in \eqref{mult-rule} so that $\Lambda(\beta_0)<0$. Then
\begin{align}\label{f-lambda1+lambda2-beta+}
f(\frac{\lambda_1+\lambda_2-\beta_0}{k_1+k_2})>f(\frac{\lambda_1+\lambda_2}{k_1+k_2})=\frac{k_1f(\frac{\lambda_1}{k_1})+k_2f(\frac{\lambda_2}{k_2})}{k_1+k_2}
\end{align}
By \eqref{point-dis},
\begin{align}\label{s-l1+l2-beta}
s^{(k_1+k_2)}_{\lambda_1+\lambda_2-\beta_0}>s^{(k_1+k_2)}_{\lambda_1+\lambda_2}=s^{(k_1)}_{\lambda_1}+s^{(k_2)}_{\lambda_2}.
\end{align}
Similarly, if $\beta'_0(\Lambda)=0$, then
\begin{align}\label{s-l1+l2-beta}
s^{(k_1+k_2)}_{\lambda_1+\lambda_2-\beta'_0}=s^{(k_1+k_2)}_{\lambda_1+\lambda_2}=s^{(k_1)}_{\lambda_1}+s^{(k_2)}_{\lambda_2}.
\end{align}

Consider the two factors $t^{-s^{(k_j)}_{\lambda_j}}V_{\lambda_j},~j=1,2$ in ${\rm Gr}(\mathcal F)$ \eqref{GrF-def}. By \eqref{mult-rule}, the multiplication of the two blocks in ${\rm Gr}(\mathcal F)$ is
\begin{align}\label{mult-rule-X0}
t^{-s^{(k_1)}_{\lambda_1}}V_{\lambda_1}\cdot t^{-s^{(k_2)}_{\lambda_2}}V_{\lambda_2}=t^{-s^{(k_1)}_{\lambda_1}-s^{(k_2)}_{\lambda_2}}V_{\lambda_1+\lambda_2}\oplus\oplus_\beta(...),
\end{align}
where $\beta$'s appeared in \eqref{mult-rule-X0} form a subset of those appeared in \eqref{mult-rule}.

But by \eqref{s-l1+l2-beta},
$$V_{\lambda_1+\lambda_2-\beta_0}\subset\mathcal F^{>s^{(k_1+k_2)}_{\lambda_1+\lambda_2}}R_{k_1+k_2}$$
whenever $\Lambda(\beta_0)<0$. Hence the $\beta$'s appeared in \eqref{mult-rule-X0} if and only if $\Lambda(\beta)=0$. Consequently, there is no $c_i\alpha_i$ with $c_i>0$ and $i>i_0$ lies. We conclude
$$\mathcal V(G/H_0)=-({\rm Span}_{\mathbb R_{\geq0}}\Pi_{G/H_0})^\vee=-({\rm Span}_{\mathbb R_{\geq0}}\{\alpha_1,...,\alpha_{i_0}\})^\vee,$$
which is precisely \eqref{val-cone-G/H0}. Taking primitive generators of each edge of $-(\mathcal V(G/H_0))^\vee$ one gets \eqref{sip-sph-rt-G/H0}.
\end{proof}


From Proposition \ref{val-cone-G/H0-prop} we directly get
\begin{coro}\label{Lambda-appro}
Let $\mathcal F_\Lambda$ be the $G$-equivariant special $\mathbb R$-test configuration as above and $\mathcal F_{\Lambda'}$ be the $G$-equivariant special $\mathbb Z$-test configuration constructed in Proposition \ref{F-Lambda-appro} whose central fibre is isomorphic to that of $\mathcal F_\Lambda$. Then $\Lambda$ and $\Lambda'$ are contained in the relative interior of a same face of $\mathcal V_{\mathbb R}(G/H)$.
\end{coro}
\begin{proof}
Clearly, the sets spherical roots (which is a subset of $\Pi_{G/H}$) of the two central fibres coincide. By Proposition \ref{val-cone-G/H0-prop}, a spherical root in $\Pi_{G/H}$ is orthogonal to $\Lambda'$ if and only if it is orthogonal to $\Lambda$. We conclude the Corollary since $\mathcal V_{\mathbb R}(G/H)$ is the dual cone of the cone spanned by $\Pi_{G/H}$.
\end{proof}
Also, we have
\begin{coro}\label{horo-X0}
$\mathcal F_\Lambda$ has horospherical central fibre $\mathcal X_0$ if and only if $\Lambda\in{\rm RelInt}(\mathcal V_\mathbb R(G/H))$.
\end{coro}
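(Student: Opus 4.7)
The plan is to reduce the corollary directly to Proposition \ref{val-cone-G/H0-prop}, which has already done the heavy lifting of determining the spherical root system of $G/H_0$. First I would recall the standard characterization: a spherical homogeneous space is horospherical precisely when its set of simple spherical roots is empty, equivalently when its valuation cone is the entire ambient space $\mathfrak N_\mathbb R$. By Corollary \ref{M(G/H0)}, the ambient space for $G/H_0$ is the same as that of $G/H$, so this criterion is expressible purely in terms of $\Pi_{G/H_0}$.

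Next I would apply Proposition \ref{val-cone-G/H0-prop} to get
\[
\Pi_{G/H_0} = \{\alpha \in \Pi_{G/H} : \alpha(\Lambda) = 0\}.
\]
Consequently $\Pi_{G/H_0}$ is empty if and only if no simple spherical root of $G/H$ vanishes on $\Lambda$, i.e.\ $\alpha(\Lambda) \neq 0$ for every $\alpha \in \Pi_{G/H}$.

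Finally, since $\Pi_{G/H}$ consists of the primitive generators of the edges of the (simplicial) cone $(-\mathcal V(G/H))^\vee$, these roots are linearly independent, so $\mathcal V(G/H)$ is solid in $\mathfrak N_\mathbb R(G/H)$ and
\[
{\rm RelInt}(\mathcal V(G/H)) = \{x \in \mathfrak N_\mathbb R(G/H) : \alpha(x) < 0\ \text{for all}\ \alpha \in \Pi_{G/H}\}.
\]
Combined with $\Lambda \in \mathcal V(G/H)$ (which already gives $\alpha(\Lambda) \leq 0$ for every $\alpha \in \Pi_{G/H}$), the condition $\alpha(\Lambda) \neq 0$ for all $\alpha$ is equivalent to $\Lambda \in {\rm RelInt}(\mathcal V(G/H))$. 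This yields the corollary. Given that Proposition \ref{val-cone-G/H0-prop} is in hand, no serious obstacle remains; the argument is essentially a bookkeeping exercise translating ``empty spherical root system'' into a condition on $\Lambda$.
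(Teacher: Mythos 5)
Your proof is correct and follows essentially the same route as the paper: both reduce the statement to the computation $\Pi_{G/H_0}=\Pi_{G/H}\cap\Lambda^\perp$ from Proposition \ref{val-cone-G/H0-prop} and then use the characterization of horospherical spaces as those with empty spherical root system (the paper phrases this via the multiplication rule $V_\lambda\cdot V_\mu\subset V_{\lambda+\mu}$ citing Timashev, which is equivalent to $\Pi_{G/H_0}=\emptyset$). The final bookkeeping identifying $\{\alpha(\Lambda)<0\ \text{for all}\ \alpha\in\Pi_{G/H}\}$ with ${\rm RelInt}(\mathcal V(G/H))$ is the same in both arguments.
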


\begin{proof}
By \cite[Propositon 7.6]{Timashev-book}, $\mathcal X_0$ is horospherical if and only if in the graded algebra \eqref{GrF-def} it holds
$$V_\lambda\cdot V_\mu\subset V_{\lambda+\mu},$$
for any $\lambda,\mu$. This is equivalent to $\Pi_{G/H_0}=\emptyset$ and the Corollary follows from \eqref{sip-sph-rt-G/H0}.
\end{proof}
We remark that Corollary \ref{horo-X0} was first shown in \cite{Popov-1986} for a wider class of $G$-varieties.

Now we compute the remaining terms $\Pi^p_{G/H_0}$ and $\mathcal D^a(G/H_0)$. We will reduce the arguments to $G$-equivariant normal $\mathbb Z$-test configurations.

Suppose that $\Lambda$ is contained in the relative interior of a face $\mathfrak F$ of $\mathcal V_\mathbb R(G/H)$. Let $\Lambda'$ be a rational vector so that $\mathcal F_{\Lambda'}$ is a $G$-equivariant special test configuration whose central fibre is isomorphic to that of $\mathcal F_\Lambda$. From Corollary \ref{Lambda-appro} we have $\Lambda'\in{\rm RelInt}(\mathfrak F)$.

Let us recall the geometric structure of a general $G$-equivariant normal $\mathbb Z$-test configuration. Let $\Lambda'$ be any rational element in $\mathcal V(G/H)$. Recall Remark \ref{f-Gamma-resclae}. We scale  $\Lambda'$ by a positive $m'$ so that $m'\Lambda'$ is primitive. In this way we normalize $\Gamma(\mathcal F_{m'\Lambda'})=\mathbb Z$, while keeping the total space and central fibre unchanged. The total space $(\mathcal X',\mathcal L')$ of $\mathcal F_{m'\Lambda'}$ is a $\mathbb C^*$-equivariant family over $\mathbb C$ with $\mathcal X_{0}'$ the fibre at $0\in\mathbb C$. By adding a trivial fibre at $\infty$ we can compactify it to a family $\bar{\mathcal X}_i\overset{{\rm pr}'}{\to}\mathbb {CP}^1$. Up to adding to $\mathcal L'$ the pull-back $c{\rm pr}'^*(\infty)$ for sufficiently large $c\in\mathbb N_+$ so that $\bar{\mathcal L}'=\mathcal L'+c{\rm pr}'^*(\infty)$ is ample, the compactified total space $(\bar{\mathcal X}',\bar{\mathcal L}')$ is a polarized $G\times\mathbb C^*$-projective completion of $G/H\times\mathbb C^*$ with moment polytope
$$\Delta(\bar{\mathcal L}'):=\{(\lambda,t)\in\mathfrak X_\mathbb R(B)\oplus\mathbb R|~0\leq t\leq f(\lambda):=C+m'\Lambda'(\lambda),~\lambda\in\Delta(L)\},$$
where $C\in\mathbb N_+$ is a constant. The central fibre $\mathcal X_{0}'$ is the $G\times\mathbb C^*$-invariant divisor that corresponds to the graph of $f$ on $\Delta(L)$. Also, it is obvious that the valuation cone $\mathcal V(G/H\times\mathbb C^*)=\mathcal V(G/H)\times\mathbb Q$. The colours $\mathcal D(G/H\times\mathbb C^*)=\{D\times\mathbb C^*|D\in\mathcal D(G/H)\}$  so that the map $\bar\varrho:\mathcal D(G/H\times\mathbb C^*)\to\mathcal V(G/H)\times\mathbb Q$ is given by
$$\bar\varrho(D\times\mathbb C^*)=(\varrho(D),0),$$
and
\begin{align}\label{colour-type-TC}
D\times\mathbb C^*\in\mathcal D^\star(G/H\times\mathbb C^*)~\text{if and only if}~D\in\mathcal D^\star(G/H).
\end{align}
Denote by $\mathfrak F_X$ the coloured fan of $X$. The coloured fan $\mathfrak F_{\bar{\mathcal X}'}$ consists of the following coloured cones:

\emph{Type-1: Coloured cones that correspond to $G\times\mathbb C^*$-invariant subvarieties in the trivial fibre at $\infty\in\mathbb{CP}^1$.} They are of form
\begin{align}\label{type-1-TC}
({\rm Span}_{\mathbb Q_{\geq0}}(\mathfrak C\times\{0\},(0,1)),\mathfrak R\times\{0\}),
\end{align}
where $(\mathfrak C,\mathfrak R)\in\mathfrak F_X$.

\emph{Type-2: Coloured cones correspond to $\mathbb C^*$ times the $G$-invariant subvarieties in $X$.}
They are of form
\begin{align}\label{type-2-TC}
(\mathfrak C,\mathfrak R)\times\{0\},
\end{align}
when it corresponds to $\mathbb C^*$ times the $G$-invariant subvariety given by $(\mathfrak C,\mathfrak R)$.

\emph{Type-3: Coloured cones that correspond to $G\times\mathbb C^*$-invariant subvarieties in $\mathcal X_{0}'$.}
They are of form
\begin{align}\label{type-3-TC}
(\bar{\mathfrak C},\mathfrak R\times\{0\}),
\end{align}
where $\bar{\mathfrak C}$ is a inner normal cone of $\Delta(\bar{\mathcal L}')$ at a point on the graph of $f$ so that its relative interior ${\rm RelInt}(\bar{\mathfrak C})\cap(\mathcal V\times\mathbb Q)\not=\emptyset$. Any such a cone is generated by some inner normal cone $\sigma\times\{0\}$ of $\Delta(L)$ and $(m'\Lambda',-1)$. It remains to select the colours. Since every lower dimensional cones in $\mathfrak F_{\bar{\mathcal X}'}$ is the face of a maximal dimensional cone, it suffices to select the colours for every $\sigma$ of maximal dimension. That is, $\sigma$ is the inner normal cone of $\Delta(\mathcal L)$ at a vertex $\tilde v=(v,f(v))$ on the graph of $f$, where $v$ is a vertex of $\Delta(L)$. According to \eqref{Cartier-condition}, for such a $\sigma$,
\begin{align}\label{type-3-TC-colour}
\mathfrak R=\{D\in\mathcal D(G/H)|(\varrho(D),0)\in \sigma,~\varrho(D)(v)+\lambda_D=0\},
\end{align}
where $\lambda_D$ is given in \eqref{weil-div}. 

In particular, $\mathcal X_{0}'$ is determined by the colourless cone $(\bar{\mathfrak C}_0,\emptyset):=(\mathbb Q_{\geq0}(m'\Lambda',$ $-1),\emptyset)$ of Type-3. 

\begin{prop}\label{colour-G/H0}
The set $\Pi^p_{G/H_0}=\Pi^p_{G/H}$. Also, for each colour $D_0\in\mathcal D^a(G/H_0)$, there is a unique $D\in\mathcal D^a(G/H)$ so
that $D$ is moved by $P_{\alpha_D}$ for the simple root $\alpha_D(=\alpha_{D_0})\in\Pi_{G/H_0}$, and $\varrho_0(D_0)=\varrho(D)$. In particular, the sets $\Pi^p_{G/H_0}$, $\mathcal D^a(G/H_0)$ and
the map $\varrho_0^a(G/H_0)$ depends only on the face of $\mathcal V_\mathbb R(G/H)$, that contains $\Lambda$ in its relative interior.
\end{prop}

\begin{proof}
We first prove the Proposition for $G$-equivariant special $\mathbb Z$-test configuration $\mathcal F_{m'\Lambda'}$, where $\Lambda'$ is rational and $m'\in\mathbb N_+$ is chosen so that $m'\Lambda'$ is primitive. As discussed above, the total space compactified to a polarized $G\times\mathbb C^*$-spherical variety $\bar{\mathcal X}'$ and $\mathcal X_0$ corresponds to the colourless cone $(\mathbb Q_{\geq0}(m'\Lambda',-1),\emptyset)$ in the coloured fan of $\bar{\mathcal X}'$. The Proposition then follows directly from \cite[Theorem 1.1]{Ga-Ho-datum}:
$$\Pi_{G/H_0}^p=\{\alpha\in Pi_G|\mathcal D(G/H\times\mathbb C^*;\alpha)=\emptyset\}=\Pi_{G/H}^p;$$
Also,for each $D_0\in\mathcal D^a(G/H_0)$ that moves by some $P_\alpha$ with $\alpha\in\Pi_{G/H_0}$, there is a unique $D\in\mathcal D(G/H;\alpha)$ so that $D_0$ is contained in the closure of $D\times\mathbb C^*$ in $\bar{\mathcal X}'$. In particular, there is a bijection
\begin{align}\label{psi-map}
\psi:\mathcal D^a(G/H_0)\to\cup_{\alpha\in\Pi_{G/H_0}}\mathcal D(G/H;\alpha).
\end{align}
We further identify $\mathfrak N(G/H_0)$ with $\mathfrak N(G/H)$, since they are the $\mathbb Z$-dual spaces of isomorphic lattices $\mathfrak M(G/H_0)$ and $\mathfrak M(G/H)$, respectively. Then it is direct to check that $\varrho_0(D_0)=\varrho(D)$.

In particular, we see that the sets $\Pi_{G/H_0}$, $\mathcal D^a(G/H_0)$ and the map $\varrho_0$ depend only on the collection of spherical roots in $\Pi_{G/H}$ that are orthogonal to $\Lambda$. Thus, we have proved the Proposition for $G$-equivariant special $\mathbb Z$-test configurations.

For a general $\mathcal F_\Lambda$ with $\Lambda\in\mathcal V_\mathbb R(G/H)$, by Proposition \ref{F-Lambda-appro}, there is a $\Lambda'\in\mathcal V_\mathbb R(G/H)$ so that $\mathcal F_\Lambda$ and $\mathcal F_{\Lambda'}$ have the same central fibre. With the help of Corollary \ref{Lambda-appro}, the Proposition then reduces to the case of special $\mathbb Z$-test configurations discussed above.
\end{proof}

\subsection{Coloured cone of the central fibre}
In this section, we will prove
\begin{thm}\label{finiteness-X0}
Let $(X,L)$ be a polarized $G$-spherical variety with moment polytope $\Delta(L)$. Let $\mathcal F_{\Lambda_1}$ and $\mathcal F_{\Lambda_2}$ be two $G$-equivariant special $\mathbb R$-test configuration of $(X,L)$. Then $\mathcal F_{\Lambda_1}$ and $\mathcal F_{\Lambda_2}$ have the same central fibre if and only if $\Lambda_1$ and $\Lambda_2$ lie in the relative interior of a same face of $\mathcal V_\mathbb R(G/H)$. In particular, up to $G$-equivariant isomorphism, the set of possible central fibres of $G$-equivariant special $\mathbb R$-test configurations
$$\{\mathcal X_0|~\mathcal X_0~\text{is the central fibre of}~\mathcal F_\Lambda~\text{with}~\Lambda\in\mathcal V_\mathbb R(G/H)\}$$
is a finite set.
\end{thm}

\begin{proof}
Let $\mathfrak F$ be any face of $\mathcal V_\mathbb R(G/H)$. By Proposition \ref{F-Lambda-appro}, it suffices to show that for any two rational elements $\Lambda_i\in{\rm RelInt}(\mathfrak F),~i=1,2$, the corresponding central fibres $\mathcal X_{0,i}$ of $\mathcal F_{\Lambda_i},~i=1,2$, are isomorphic. By Proposition \ref{homo-sph-datum-G/H0}, they are projective completion of a same spherical homogenous space $G/H_0$. In view of Theorem \ref{coloured-fan-classification}, it suffices to show that $\mathcal X_{0,1}$ and $\mathcal X_{0,2}$ have the same coloured fan.

As before, we scale each $\Lambda_i$ by a positive $m_i$ so that $m_i\Lambda_i$ is primitive. Then we normalize $\Gamma(\mathcal F_{m_i\Lambda_i})=\mathbb Z$. Also we compactifify the total space of $\mathcal F_{m_i\Lambda_i}$ to a polarized $G\times\mathbb C^*$-projective spherical completion $(\bar{\mathcal X}_i,\bar{\mathcal L}_i)$ of $G/H\times\mathbb C^*$ with moment polytope
$$\Delta(\bar{\mathcal L}_i):=\{(\lambda,t)\in\mathfrak X_\mathbb R(B)\oplus\mathbb R|~0\leq t\leq f_{i}(\lambda):=C+m_i\Lambda_i(\lambda),~\lambda\in\Delta(L)\},$$
where $C_i\in\mathbb N_+$. The coloured fan $\mathfrak F_{\bar{\mathcal X}_{m_i\Lambda_i}}$ is given according to \eqref{type-1-TC}-\eqref{type-3-TC-colour}. We recall that $\mathcal X_{0,i}$ is determined by the colourless cone $(\bar{\mathfrak C}_0,\emptyset):=(\mathbb Q_{\geq0}(m_i\Lambda_i,$ $-1),\emptyset)$ of Type-3. The coloured cone in $\mathfrak F_{\bar{\mathcal X}_i}$ containing $(\bar{\mathfrak C}_0,\emptyset)$ are precisely those of Type-3.

We can then determine the coloured fan $\mathfrak F_{\mathcal X_{0,i}}$ of $\mathcal X_{0,i}$ by using \cite[Theorem 1.2]{Ga-Ho-datum}. Rewrite
\begin{align}\label{coloured-cone-TCi}
&(\bar{\mathfrak C}_i(\sigma),\bar{\mathfrak R}_i(\sigma))\notag\\:=&({\rm Span}_{\mathbb Q_{\geq0}}(\sigma\times\{0\},(m_i\Lambda_i,-1)),(\mathcal D(G/H)\cap \sigma)\times\{0\})
\end{align}
for any inner normal cone $\sigma$ of $\Delta(L)$. Then all coloured cones of Type-3 as
\begin{align*}
\mathfrak F_i^3:=\{(\bar{\mathfrak C}_i(\sigma),\bar{\mathfrak R}_i(\sigma))|\sigma\in {\mathfrak F^3_X(\Lambda_i)}\},
\end{align*}
where
\begin{align}
\mathfrak F^3_X(\Lambda_i):=\{&\sigma~\text{is an inner normal cone of}~\Delta(L)|\notag\\
&{\rm RelInt}({\rm Span}_{\mathbb Q_{\geq0}}(\sigma\times\{0\},(m_i\Lambda_i,-1)))\cap(\mathcal V(G/H)\times\mathbb Q)\not=\emptyset\}.\label{F-Xi-3}
\end{align}

Denote by $\pi_i:\mathfrak N(G/H\times\mathbb C^*)(\cong\mathfrak N\oplus\mathbb Z)\to \mathfrak N(G/H)(\cong\mathfrak N(G/H_0))$ the projection with $\ker(\pi_i)=\mathbb Z(m_i\Lambda_i,-1)$. By \cite[Theorem 1.2]{Ga-Ho-datum}, $\mathfrak F_{\mathcal X_{0,i}}$ consists of all coloured cones
\begin{align}\label{coloured-cone-in-FX1}
(\pi_i(\bar{\mathfrak C}_i(\sigma)),\mathfrak R_i(\sigma)),~\sigma\in\mathfrak F^3_X(\Lambda_i),
\end{align}
where
\begin{align}\label{pi-cone-in-F_X1}
\pi_i(\bar{\mathfrak C}_i(\sigma))=\sigma\subset\mathfrak N_\mathbb Q(G/H_0)(\cong\mathfrak N_\mathbb Q(G/H)),
\end{align}
and
\begin{align}\label{colour-in-FXi}
\mathfrak R_i(\sigma):=&\psi^{-1}(\bar{\mathfrak R}_i(\sigma))\bigcup_{\alpha\in\Pi_G,\mathcal D(G/H\times\mathbb C^*;\alpha)\subset\bar{\mathfrak R}_i(\sigma)}\mathcal D_0(G/H_0;\alpha)\notag\\
=&\psi^{-1}(\bar{\mathfrak R}_i(\sigma))\bigcup_{\alpha\in\Pi_G,\mathcal D(G/H;\alpha)\subset(\mathcal D(G/H)\cap\sigma)}\mathcal D_0(G/H_0;\alpha).
\end{align}
Here $\psi$ is the bijection \eqref{psi-map} that originally constructed in \cite{Ga-Ho-datum}.

Now we prove that $\mathfrak F_{\mathcal X_{0,1}}=\mathfrak F_{\mathcal X_{0,2}}$. By Lemma \ref{cone-lem} in the Appendix and the relation \eqref{F-Xi-3}, we see that $\mathfrak F^3_X(\Lambda_1)=\mathfrak F^3_X(\Lambda_2)$. By \eqref{coloured-cone-in-FX1}-\eqref{pi-cone-in-F_X1} we see that both fans consist the same collection of underlying cones
$$\{\sigma|\sigma\in\mathfrak F_{\mathcal X_{0,1}}(=\mathfrak F_{\mathcal X_{0,2}})\}.$$

It remains to prove that for each $\sigma\in\mathfrak F_{\mathcal X_{0,1}}(=\mathfrak F_{\mathcal X_{0,2}})$, $\frak R_1(\sigma)=\frak R_2(\sigma)$. By \eqref{coloured-cone-TCi} we have $\bar{\mathfrak R}_1(\sigma)=\bar{\mathfrak R}_2(\sigma)$. One the other hand, $\psi$ does not depends on the choice of $\Lambda\in{\rm RelInt}(\mathfrak F)$. Hence $\psi^{-1}(\bar{\mathfrak R}_1(\sigma))=\psi^{-1}(\bar{\mathfrak R}_2(\sigma))$. It is obvious that the remaining part in \eqref{colour-in-FXi} also keeps invariant when varying $\Lambda_i\in{\rm RelInt}(\mathfrak F)$. Hence we have $(\pi_1(\bar{\mathfrak C}_1(\sigma)),\mathfrak R_1(\sigma))=(\pi_2(\bar{\mathfrak C}_2(\sigma)),\mathfrak R_2(\sigma))$ for every $\sigma$, and $\mathfrak F_{\mathcal X_{0,1}}=\mathfrak F_{\mathcal X_{0,2}}$.
\end{proof}




\section{Applications}

In this section, we give several applications of the classification results derived in Section 3.
\subsection{Base change of a $G$-equivariant $\mathbb Z$-test configuration}
Let $(X,L)$ be a polarized spherical embedding of $G/H$ and $(\mathcal X,\mathcal L)$ be a $G$-equivariant normal $\mathbb Z$-test configuration of it. 
Then there is a concave, rational piecewise linear function $f:\Delta_L\to\mathbb R_{\geq0}$ so that the corresponding filtration as points of discontinuity
\begin{align}\label{ZTC-fil-F}
s_{\lambda}^{(k)}=[kf(\frac1k)],~\lambda\in \Delta_{L,k}.
\end{align}
Clearly the corresponding filtration $\mathcal F$ satisfies $\Gamma(\mathcal F)\subset\mathbb Z$. Up to adding a constant, $f$ is precisely the associated function of $(\mathcal X,\mathcal L)$ introduced in \cite[Theorem 4.1]{Del-2020-09}.

In general the central fibre $\mathcal X_0$ is not-reduced. It is known that after a base change $z\to z^d$ for sufficiently divisible $d\in\mathbb N_+$ on $\mathbb C^*$ and then taking normalization, one gets a normal $\mathbb Z$-test
configuration $(\mathcal X_{(d)},\mathcal L_{(d)})$ with reduced central fibre (cf. \cite[Proof of Proposition 7.16]{Boucksom-Hisamoto-Jonsson}). The base change operation is quite important, for example the Futaki invariant of $(\mathcal X_{(d)},\mathcal L_{(d)})$ equals to the
non-Archimedean Mabuchi functional (cf. \cite[Section 7.3]{Boucksom-Hisamoto-Jonsson}). Using the homogeneity \cite{Del-2020-09} derives the non-Archimedean functional of $(\mathcal X,\mathcal L)$.

As we can not find in the literature a precise expression of the associated function of $(\mathcal X_{(d)},\mathcal L_{(d)})$, we prove in the following
\begin{prop}\label{base-change}
Let $(X,L)$ be a polarized spherical embedding of $G/H$ and $(\mathcal X,\mathcal L)$ be a $G$-equivariant normal $\mathbb Z$-test configuration of $(\mathcal X,\mathcal L)$.
Suppose that the filtration $\mathcal F$ induced by $(\mathcal X,\mathcal L)$ satisfying \eqref{ZTC-fil-F}. Then $(\mathcal X_{(d)},\mathcal L_{(d)})$ is induced by the filtration $\mathcal F_{(d)}$ with
points of discontinuity
\begin{align}\label{ZTC-fil-Fd}
s_{\lambda|(d)}^{(k)}=[kdf(\frac1k)],~\lambda\in \Delta_{L,k}.
\end{align}
\end{prop}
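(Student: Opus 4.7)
\textit{Proof proposal.} The plan is to work at the level of Rees algebras and explicitly identify the normalization of the naive base change. By the hypothesis and Remark~\ref{F-normalized}, the Rees algebra of $(\mathcal{X},\mathcal{L})$ is
$$\mathcal{R} = \bigoplus_{k\in\mathbb{N}}\bigoplus_{\lambda\in k\Delta_L\cap(\mathfrak{M}+k\chi_0)} z^{-[kf(\lambda/k)]}\, V_\lambda\otimes\mathbb{C}[z],$$
and the naive base change $z=w^d$ produces
$$\mathcal{R}' := \mathcal{R}\otimes_{\mathbb{C}[z]}\mathbb{C}[w] = \bigoplus_{k,\lambda} w^{-d[kf(\lambda/k)]}\, V_\lambda\otimes\mathbb{C}[w],$$
corresponding to the (in general non-normal) filtration $s_\lambda^{(k),{\rm naive}} = d[kf(\lambda/k)]$. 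Since $(\mathcal{X}_{(d)},\mathcal{L}_{(d)})$ is by construction ${\rm Proj}$ of the integral closure of $\mathcal{R}'$, it suffices to show that the candidate Rees algebra $\mathcal{R}_{(d)}$ defined by $s^{(k)}_{\lambda|(d)} = [kdf(\lambda/k)]$ is exactly this integral closure.

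First, $\mathcal{R}_{(d)}$ is normal: applying Theorem~\ref{G-classify} to the function $df$ with group $\Gamma=\mathbb{Z}$, we note that $df$ is concave, piecewise linear with the same rational polytope linearity domains as $f$, satisfies $\min(df)=0$, and has $\nabla(df)=d\nabla f\in\mathcal{V}(G/H)$, while $\mathbb{Z}\supset d\,\Gamma_{r_0}({\rm Vert}(f))=\Gamma_{r_0}({\rm Vert}(df))$ for the $r_0$ that already worked for $\mathcal{F}$. This produces a $G$-equivariant normal $\mathbb{Z}$-test configuration whose Rees algebra is precisely $\mathcal{R}_{(d)}$. Next, the elementary inequality $[dx]\geq d[x]$ gives $\mathcal{R}'\subset\mathcal{R}_{(d)}$, and both algebras share the same fraction field (they agree over $\mathbb{C}^*_w$). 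It thus remains to verify that $\mathcal{R}_{(d)}$ is integral over $\mathcal{R}'$.

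For the integrality, take a homogeneous generator $w^{-s}\sigma\in\mathcal{R}_{(d)}$ with $\sigma\in V_\lambda\subset R_k$ and $s=[kdf(\lambda/k)]$, and compute $(w^{-s}\sigma)^d = w^{-ds}\sigma^d$, where $\sigma^d\in R_{dk}$. By Proposition~\ref{mult-rule-isotypic}, $\sigma^d$ decomposes into isotypic components in various $V_{d\lambda-\beta}\subset R_{dk}$, each tail vector $\beta$ being a non-negative combination of simple $G$-roots that actually appears in the iterated product. Since $\nabla f\in\mathcal{V}(G/H)$ and by Corollary~\ref{val-cone} every such $\beta$ satisfies $\langle u,\beta\rangle\leq 0$ for all $u\in\mathcal{V}(G/H)$, the concave function $t\mapsto f(\lambda/k-t\beta/(dk))$ is non-decreasing on $[0,1]$, giving $f((d\lambda-\beta)/(dk))\geq f(\lambda/k)$ and therefore
$$s^{(dk),{\rm naive}}_{d\lambda-\beta} = d\bigl[dkf((d\lambda-\beta)/(dk))\bigr]\geq d[dkf(\lambda/k)] = ds.$$
Hence every isotypic component of $w^{-ds}\sigma^d$ lies in $\mathcal{R}'$, i.e.\ $(w^{-s}\sigma)^d\in\mathcal{R}'$. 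This integrality, combined with the normality of $\mathcal{R}_{(d)}$ and the equality of fraction fields, forces $\mathcal{R}_{(d)}=\overline{\mathcal{R}'}$, proving \eqref{ZTC-fil-Fd}. The principal technical obstacle is exactly this last step: tracking the full $G$-isotypic decomposition of $\sigma^d$ and showing that every tail $V_{d\lambda-\beta}$ sits at a naive filtration level at least $ds$---this is where the spherical-variety tools (Proposition~\ref{mult-rule-isotypic} together with Corollary~\ref{val-cone}) are essential.
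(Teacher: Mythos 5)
Your proposal is correct and follows essentially the same route as the paper: identify the naive base-changed Rees algebra, recognize the candidate algebra attached to $[kdf(\lambda/k)]$ as the normal one produced by Theorem~\ref{G-classify} applied to $df$, and conclude by an integrality computation resting on Proposition~\ref{mult-rule-isotypic} and the valuation-cone control of the tail vectors. The only cosmetic difference is that you make the integrality step explicit via the $d$-th power, where the paper simply cites \emph{Step-1.1} of the proof of Theorem~\ref{G-classify}.
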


\begin{proof}
The Rees algebra of $\mathcal F$ is
$${\rm R}(\mathcal F)=\oplus_{k\in\mathbb N}\oplus_{s\in\mathbb Z}\oplus_{\lambda\in \Delta_{L,k},[kf(\lambda/k)]\geq s}t^{-s}V_\lambda$$

Taking a base change, ${\rm R}(\mathcal F)$ is transferred into
$${\rm R}'=\oplus_{k\in\mathbb N}\oplus_{s\in\mathbb Z}\oplus_{\lambda\in \Delta_{L,k},[kf(\lambda/k)]\geq s}t^{-ds}V_\lambda.$$
It is the Rees algebra of the filtration $\mathcal F'$ which has point of discontinuity $s'^{(k)}_\lambda=ds^{(k)}_\lambda$ on $V_\lambda$, where $\lambda\in \Delta_{L,k}$.

It suffices to take integral closure of ${\rm R}'$ in the normal ring
$$\hat{\rm R}=\oplus_{k\in\mathbb N}\oplus_{s\in\mathbb Z}\oplus_{\lambda\in \Delta_{L,k}}t^{-s} V_\lambda$$
From \emph{Step-1.1} in the proof of Theorem \ref{G-classify} we know that each
\begin{align*}
t^{-s}\sigma\in t^{-s}V_\lambda,~\lambda\in \Delta_{L,k}~\text{and}~(\mathbb Z\ni)s\leq kdf(\frac1{k}\lambda)
\end{align*}
is integral in ${\rm R}'$. Hence the integral closure of ${\rm R}'$ is
\begin{align*}
\overline{{\rm R}'}=\oplus_{k\in\mathbb N}\oplus_{s\in\mathbb Z}\oplus_{\lambda\in \Delta_{L,k},[kdf(\lambda/k)]\geq s}t^{-s}V_\lambda.
\end{align*}
Thus, the filtration $\mathcal F_{(d)}$ of $(\mathcal X_{(d)},\mathcal L_{(d)})$ is the $\mathbb Z$-test configuration defined by $df$ and $\mathbb Z$. Hence we get \eqref{ZTC-fil-Fd}.

\end{proof}


\subsection{The increasing sequence}
In the following we approximate a  $G$-equivariant normal $\mathbb R$-test configuration by an increasing sequence of $\mathbb Z$-test configurations constructed in \cite[Definition-Proposition 2.15]{Han-Li}. More precisely, given such an $\mathbb R$-test configuration $\mathcal F$, we can construct a sequence of $G$-equivariant normal $\mathbb Z$-test configurations $\{\mathcal F_p\}_{p\in\mathbb N_+}$ so that
the filtration of $\mathcal F_p$ on $\oplus_{k\in\mathbb N}R_{pk}$ is induced by $\mathcal F_\mathbb Z$ on the $R_p$-piece (cf. \cite[Section 2.2]{Han-Li}). First, consider
$$(\mathcal F_\mathbb Z)^sR_{p}=\mathcal F^{\lceil s\rceil}R_{p}.$$
It induces an $\mathbb R$-test configuration $\hat{\mathcal F}_p$ of $(X,L^p)$. Then $\mathcal F_p$ is defined to be normalization of $\hat{\mathcal F}_p$.

Let $\Gamma_p$ be the group generated by $\{s^{(p)}_\lambda|\lambda\in \Delta_{L,p}\}$. Then $\Gamma_p\subset\mathbb Z$.
The Rees algebra of $\hat{\mathcal F}_p$ is
\begin{align*}
{\rm R}_p:={\rm R}(\hat{\mathcal F}_p)=\oplus_{k\in\mathbb N}\oplus_{s\in\Gamma_p}\oplus_{\lambda\in \Delta_{L,kp},s'^{(kp)}_\lambda\geq s}t^{-s}V_\lambda,
\end{align*}
where
$$s'^{(kp)}_\lambda=\max\{\sum_{i=1}^ks^{(p)}_{\mu_i}|\mu_i\in  \Delta_{L,p}, V_\lambda\subset V_{\mu_1}\cdot...\cdot V_{\mu_k}\}.$$

Let $f$ be the functions associated to $\mathcal F$ given by Theorem \ref{G-classify}. Define
\begin{align}\label{fp-func}
f_p(\mu)=\min\{\varphi(\mu)|\varphi(\mu)~\text{is concave and}~\varphi(\frac1p\lambda)\geq\frac{[s^{(p)}_\lambda]}{p},~\lambda\in \Delta_{L,p}\}.
\end{align}
From \emph{Step-1.1} in the proof of Theorem \ref{G-classify} we know that each
$$t^{-s}\sigma\in t^{-s}V_\lambda,~\lambda\in \Delta_{L,kp}~\text{and}~(\Gamma_p\ni)s\leq kpf_p(\frac1{kp}\lambda)$$
is integral in ${\rm R}_p$. Hence the integral closure of ${\rm R}_p$ is
\begin{align*}
\overline{{\rm R}_p}=\oplus_{k\in\mathbb N}\oplus_{s\in\Gamma_p}\oplus_{\lambda\in \Delta_{L,kp},kpf_p(\lambda/kp)\geq s}t^{-s}V_\lambda.
\end{align*}
Thus, $\mathcal F_p$ is the $\mathbb Z$-test configuration defined by $f_p$ and $\Gamma_p$.

Choose a set of nonnegative generators $\{e_j\}_{j=1}^{r_\mathcal F}$ of $\Gamma(\mathcal F)$ and denote $\delta_{\mathcal F}=\max_j\{e_j\}$. Then $$0\leq f(\lambda)-\frac{1+\delta_\mathcal F}{p}\leq \frac{[s^{(p)}_\lambda]}{p}\leq f(\lambda),$$
we have
\begin{align}\label{fp-con-1}
0\leq f(\lambda)-f_p(\lambda)\leq \frac{1+\delta_\mathcal F}{p},~\lambda\in  {\Delta_L}.
\end{align}
Hence we get the uniform convergency,
\begin{align}\label{fp-con}
f_p\rightrightarrows f,~\text{on}~  {\Delta_L}~\text{as}~p\to+\infty.
\end{align}

In summary, we conclude
\begin{prop}\label{RTC-incre-seq}
Let $\mathcal F$ be a $G$-equivariant normal $\mathbb R$-test configuration of $(X,L)$ given by the function $f$ and group $\Gamma(\mathcal F)$. Then the $p$-th term $\mathcal F_p$ of the increasing sequence is given by
$f_p$ in \eqref{fp-func} and $\mathbb Z$. Consequently, the sequence $\{f_p\}_{p=1}^{+\infty}$ converges uniformly to $f$ on $\Delta_L$.
\end{prop}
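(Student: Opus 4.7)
My plan is to directly follow the construction described preceding the proposition: the $p$-th test configuration $\mathcal F_p$ arises as the normalization of $\hat{\mathcal F}_p$, where $\hat{\mathcal F}_p$ is the $\mathbb Z$-test configuration generated by forcing $(\mathcal F_\mathbb Z)^sR_p := \mathcal F^{\lceil s\rceil}R_p$. Since everything is $G$-equivariant, I can work in the isotypic decomposition of $R$ and determine $\mathcal F_p$ at the level of its Rees algebra, reducing the problem to a computation with the functions $f$ and $f_p$.

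First I would write down the Rees algebra of $\hat{\mathcal F}_p$ explicitly. Using the classification in Theorem~\ref{G-classify} one immediately has
\[
{\rm R}(\hat{\mathcal F}_p)=\bigoplus_{k\in\mathbb N}\bigoplus_{s\in\Gamma_p}\bigoplus_{\substack{\lambda\in kp\Delta_L\cap(\mathfrak M+kp\chi_0)\\ s'^{(kp)}_\lambda\geq s}}t^{-s}V_\lambda,
\]
where $s'^{(kp)}_\lambda$ is obtained by multiplying generators in degree $p$ (using the product rule from Proposition~\ref{mult-rule-isotypic}). The key observation is that this is exactly the type of algebra treated in \emph{Step-1.1} of the proof of Theorem~\ref{G-classify}: its integral closure inside the larger normal algebra $\oplus t^{-s}V_\lambda$ is controlled by the smallest concave function lying above the data $(\lambda/p,[s^{(p)}_\lambda]/p)$. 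That smallest concave function is exactly $f_p$ as defined in \eqref{fp-func}. Hence $\overline{\mathrm R(\hat{\mathcal F}_p)}$ coincides with the Rees algebra of the $\mathbb Z$-test configuration determined by $(f_p,\mathbb Z)$, and this is the first claim.

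For uniform convergence I would use the linear boundedness of $\mathcal F$ together with the fact that the generators $\{e_j\}$ of $\Gamma(\mathcal F)$ are bounded by $\delta_{\mathcal F}=\max_j e_j$. From \eqref{s-k-G-general-inv} one has $s^{(p)}_\lambda\leq pf(\lambda/p)$, while the bounded step size in $\Gamma(\mathcal F)\cap[0,pf(\lambda/p)]$ forces $pf(\lambda/p)-s^{(p)}_\lambda\leq\delta_{\mathcal F}$; taking integer parts introduces an extra error at most $1$. This yields the sandwich
\[
0\leq f(\lambda)-\frac{[s^{(p)}_\lambda]}{p}\leq\frac{1+\delta_{\mathcal F}}{p},\qquad \lambda\in\Delta_L\cap(\tfrac1p\mathfrak M+\chi_0),
\]
and since $f_p$ is the concave envelope of these values (and $f$ itself is concave and dominates them), the same inequality \eqref{fp-con-1} holds after replacing $[s^{(p)}_\lambda]/p$ by $f_p$. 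Letting $p\to\infty$ gives the uniform convergence \eqref{fp-con}.

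The only delicate point is justifying that the concave envelope $f_p$ inherits the $(1+\delta_{\mathcal F})/p$ error bound, i.e.\ that taking concave envelopes does not spoil uniform closeness to the concave function $f$. This is routine: $f$ itself is a concave function satisfying $f(\lambda/p)\geq [s^{(p)}_\lambda]/p$, so by minimality in \eqref{fp-func} we have $f_p\leq f$, and the lower bound follows because any concave majorant of a set of points dominates the (piecewise linear) interpolation on $\Delta_L$. No other technical obstacle appears, since the spherical multiplication rule was already handled in the proof of Theorem~\ref{G-classify} and is reused here verbatim.
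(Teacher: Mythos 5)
Your proposal follows essentially the same route as the paper: write down the Rees algebra of $\hat{\mathcal F}_p$, invoke \emph{Step-1.1} of the proof of Theorem \ref{G-classify} to identify its integral closure with the Rees algebra of the concave envelope $f_p$ from \eqref{fp-func}, and then derive uniform convergence from the sandwich $0\leq f(\lambda)-[s^{(p)}_\lambda]/p\leq(1+\delta_{\mathcal F})/p$ exactly as in \eqref{fp-con-1}--\eqref{fp-con}. Your extra remark justifying that the concave envelope inherits the $(1+\delta_{\mathcal F})/p$ bound (via $f_p\leq f$ by minimality and domination of the piecewise-linear interpolation) is a welcome clarification of a step the paper leaves implicit, but it does not change the argument.
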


\subsection{H-invariants of $\mathbb Q$-Fano spherical varieties}
One important application of Theorem \ref{G-classify} is to compute the H-invariant of $\mathbb Q$-Fano spherical variety
and find its semistable degeneration. Let $X$ be a $\mathbb Q$-Fano spherical embedding of $G/H$. Take $\Delta_+=\Delta_{-K_X}$ and $\Delta_{+,k}=\Delta_{-K_X,k}$ for $k\in\mathbb N$. For a
normal $\mathbb R$-test configuration $\mathcal F$, we associate to it two functionals ${\rm L^{NA}}(\cdot)$ and ${\rm S^{NA}}(\cdot)$.
Denote by $M_{\mathbb Q}^{\rm div}$ the set of $\mathbb Q$-divisorial valuations of $M$ and $A_M(\cdot)$ the log-discrepancy. Also let $\phi_\mathcal F,\phi_0$ be the
non-Archimedean metric of $\mathcal F$ and the trivial test configuration (cf. \cite[Definition 2.17]{Han-Li}), respectively. Then
\begin{align}\label{L-inv-NA}
{\rm L^{NA}}(\mathcal F):=\inf_{ \mathfrak v\in M_{\mathbb Q}^{\rm div}}(A_M( \mathfrak v)+(\phi_\mathcal F-\phi_0)( \mathfrak v)),
\end{align}
Also, denote by $\Delta^{\rm O}(\mathcal F^{(t)})$ the Okounkov body of the linear series (cf. \cite[Section 2.4]{Han-Li}),
\begin{align}\label{okounkov-body-layer}
\mathcal F^{(t)}:=\{\mathcal F^{tk}R_k\}_{k\in\mathbb N_+}.
\end{align}
By Definition \ref{filtrantion-def} (3), we see that when $t\ll0$, $\Delta^{\rm O}(\mathcal F^{(t)})$ is just the Okounkov body $\Delta^{\rm O}$ of $(M,K_M^{-1})$ introduced in \cite{Okounkov},  and $\Delta^{\rm O}(\mathcal F^{(t)})=\{O\}$ when $t\gg1$. Define a function $G_\mathcal F:\Delta^{\rm O}\to\mathbb R$ by
\begin{align}\label{okounkov-body-func}
G_\mathcal F(z):=\sup\{t|z\in\Delta^{\rm O}(\mathcal F^{(t)})\},~z\in\Delta^{\rm O}
\end{align}
and set
\begin{align}\label{S-inv-NA}
{\rm S^{NA}}(\mathcal F):=-\ln\left(\frac{n!}{V}\int_{\Delta^{\rm O}}e^{-G_\mathcal F(z)}dz\right).
\end{align}
The H-invariant is defined as
\begin{align}\label{H-inv-NA}
{\rm H^{NA}}(\mathcal F):={\rm L^{NA}}(\mathcal F)-{\rm S^{\rm NA}}(\mathcal F),
\end{align}

It is proved by \cite{Blum-Liu-Xu-Zhuang,Han-Li} that ${\rm H^{NA}}(\cdot)$ admits a unique minimizer $\mathcal F_{\min}$ among filtrations. Moreover $\mathcal F_{\min}$ is a special $\mathbb R$-test configuration.
The minimizer $\mathcal F_{\min}$ is the semistable degeneration of $X$ in the sense that its central fibre $\mathcal X_0$ is modified K-semistable with respect to the induced vector field $\Lambda_0$. Indeed $(\mathcal X_0,\Lambda_0)$ gives the first-step
limit in the study of K\"ahler-Ricci flow (cf. \cite{Blum-Liu-Xu-Zhuang, Chen-Wang-Sun, Han-Li}).  By \cite[Proposition 2.9]{LL-arXiv-2021}, on a spherical variety $X$ the minimizer $\mathcal F_{\min}$ is $G$-equivariant. Thus we have

\begin{thm}\label{H-inv-F-thm}
Let $X$ be a $\mathbb Q$-Fano spherical variety with $\Delta_+$ its moment polytope. Then for the $G$-equivariant normal $\mathbb R$-test configuration $\mathcal F$
associated to $f$ and $\Gamma(\mathcal F)$,
\begin{align}\label{H-reduction-eq}
{\rm H^{NA}}(\mathcal F)\geq\ln\left(\frac1V\int_{\Delta_+}e^{-f(y)+f(2\kappa_P)}\pi(y)dy\right),
\end{align}
where $\kappa_P$ is defined in \eqref{kappa} and $$\pi(y)=\prod_{\alpha\in\Phi^G_+,\alpha\not\perp\Delta_+}\langle \alpha,y\rangle.$$
Moreover, the equality holds if $\mathcal F$ is defined by a valuation on $G/H$. That is, $\mathcal F=\mathcal F_{\Lambda_0}$ for some $\Lambda_0\in\mathcal V_\mathbb R(G/H)$.
\end{thm}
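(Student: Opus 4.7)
My plan is to handle the two pieces of $H^{\rm NA}(\mathcal F)=L^{\rm NA}(\mathcal F)-S^{\rm NA}(\mathcal F)$ separately: compute $S^{\rm NA}(\mathcal F)$ in closed form via the Okounkov body of the spherical variety, and bound $L^{\rm NA}(\mathcal F)\geq f(2\kappa_P)$ by restricting to $G$-invariant divisorial valuations. For the $S^{\rm NA}$ computation, Theorem \ref{G-classify} gives $s_\lambda^{(k)}=kf(\lambda/k)+O(1)$ as $k\to\infty$, so up to asymptotically negligible error the sublevel linear series \eqref{okounkov-body-layer} $\mathcal F^{(t)}R_k$ is the span of those $V_\lambda$ with $f(\lambda/k)\geq t$. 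Under the Okounkov-body fibration $\pi:\Delta^{\rm O}\to\Delta_+$ from \eqref{okounkov-body} with Gel'fand--Tsetlin fibres, this forces $\Delta^{\rm O}(\mathcal F^{(t)})=\pi^{-1}(\{f\geq t\})$ and hence $G_\mathcal F(y,\xi)=f(y)$. Combining Fubini with \eqref{okounkov-body-fibre} (asymptotically identifying the fibre volume with the Duistermaat--Heckman polynomial $\pi(y)$ coming from Weyl's dimension formula) and the normalisation $V=\int_{\Delta_+}\pi(y)\,dy$ collapses \eqref{S-inv-NA} to
\[
S^{\rm NA}(\mathcal F)=-\ln\!\Bigl(\tfrac{1}{V}\int_{\Delta_+}e^{-f(y)}\pi(y)\,dy\Bigr).
\]

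For the $L^{\rm NA}$ bound, the equivariance of $\mathcal F$ together with an averaging argument in the spirit of \cite[Proposition 2.9]{LL-arXiv-2021} lets us restrict the infimum in \eqref{L-inv-NA} to $G$-invariant divisorial valuations, which by \cite[Corollary 19.6]{Timashev-book} are precisely the $v_\Lambda$ indexed by rational $\Lambda\in\mathcal V(G/H)$. Two ingredients then combine: the Gagliardi--Hofscheier anticanonical divisor formula \cite{Ga-Ho} with the canonical $B$-semiinvariant weight $\kappa_P$ from \eqref{kappa} yields an explicit combinatorial expression for $A_X(v_\Lambda)$, and the identification $\mathfrak v_\mathcal F(\sigma_\lambda)=kf(\lambda/k)$ on $B$-semiinvariant sections from Theorem \ref{G-classify} gives $(\phi_\mathcal F-\phi_0)(v_\Lambda)$ as an explicit Legendre-type supremum of $\Lambda(y)-f(y)$ over $\Delta_+$ --- in the affine case this follows directly from Corollary \ref{R-TC-Lambda}, and the general concave piecewise-linear case is obtained by passing to the uniform limit along the approximation $f_p\rightrightarrows f$ of Proposition \ref{RTC-incre-seq}. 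The resulting bound reduces, via concavity of $f$ and the supergradient inequality at $y=2\kappa_P$, to $A_X(v_\Lambda)+(\phi_\mathcal F-\phi_0)(v_\Lambda)\geq f(2\kappa_P)$ for every $\Lambda\in\mathcal V(G/H)$; taking the infimum yields $L^{\rm NA}(\mathcal F)\geq f(2\kappa_P)$.

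Subtracting the two estimates then gives \eqref{H-reduction-eq}. For the equality case, when $\mathcal F=\mathcal F_{\mathfrak v}$ is defined by a valuation $\mathfrak v$ of $G/H$, the infimum in $L^{\rm NA}$ is attained at $\mathfrak v$ itself: one has $(\phi_\mathcal F-\phi_0)(\mathfrak v)=0$ by construction, and the combinatorial identity above becomes an equality, so that $L^{\rm NA}(\mathcal F)=f(2\kappa_P)$ exactly and \eqref{H-reduction-eq} collapses to an equality. The main anticipated obstacle is establishing the Legendre-type formula for $(\phi_\mathcal F-\phi_0)(v_\Lambda)$ in the non-affine case, since the Rees algebra \eqref{rees} of a general $\mathcal F$ is substantially more intricate than in the special $\mathbb R$-test configurations classified by Corollary \ref{R-TC-Lambda}; the uniform approximation \eqref{fp-con} of Proposition \ref{RTC-incre-seq}, combined with the linear boundedness from Definition \ref{filtrantion-def}(3) and the continuity of the non-Archimedean pairing developed in \cite[Section 2.2]{Han-Li}, is essential for transferring the affine identity to general $f$ and closing the argument.
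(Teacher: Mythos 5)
Your computation of $S^{\rm NA}$ follows the paper's route exactly: use \eqref{s-k-G-final} to identify the sublevel linear series, conclude $\Delta^{\rm O}(\mathcal F^{(t)})=\Delta^{\rm O}\cap\{f\geq t\}$ and hence $G_{\mathcal F}(\lambda,z')=f(\lambda)$, then integrate out the Gel'fand--Tsetlin fibres via \eqref{okounkov-body-fibre} and the Weyl dimension formula. Apart from normalization constants (the paper's \eqref{S-inv-F} carries an extra $\ln\prod_{\alpha}\langle\alpha,\rho\rangle$ that you absorb into $V$), this half is fine and is the same argument.

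The $L^{\rm NA}$ half is where you diverge, and where there is a genuine gap. You bound $A_X(v_\Lambda)+(\phi_{\mathcal F}-\phi_0)(v_\Lambda)$ from below only for the $G$-invariant divisorial valuations $v_\Lambda$ and then ``take the infimum.'' But the infimum in \eqref{L-inv-NA} runs over \emph{all} $\mathbb Q$-divisorial valuations of $X$; restricting it to a subfamily can only increase it, so a lower bound on the invariant family says nothing about $L^{\rm NA}(\mathcal F)$ unless you first prove $\inf_{\rm all}=\inf_{G\text{-}\rm inv}$. That identity is the actual content of this step: it is an equivariant-lct statement in the spirit of \cite{Li-Wang-Xu,Zhuang-2021}, and it does not follow from the ``averaging argument'' of \cite[Proposition 2.9]{LL-arXiv-2021}, which asserts the $G$-equivariance of the minimizer of ${\rm H^{NA}}$ among filtrations --- a different statement about a different object. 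The paper sidesteps this entirely: it approximates $\mathcal F$ by the increasing sequence of $\mathbb Z$-test configurations $\mathcal F_p$ of Proposition \ref{RTC-incre-seq}, invokes \cite[Remark 2.29]{Han-Li} for ${\rm L^{NA}}(\mathcal F)\geq\lim_p{\rm L^{NA}}(\mathcal F_p)$ --- with equality precisely when $\mathcal F$ is induced by a valuation, which is also where the equality case of the theorem comes from, rather than from your claim that $(\phi_{\mathcal F}-\phi_0)(\mathfrak v)=0$ ``by construction'' --- and then computes ${\rm L^{NA}}(\mathcal F_p)=f_p(\kappa_P)$ by the lct formula of \cite{Yao} applied to the canonical $B$-semiinvariant section $\mathfrak d_0$ of weight $\kappa_P$. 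Note finally that the paper's computation \eqref{L-Fp}--\eqref{L-reduction-ineq} lands on $f(\kappa_P)$, not $f(2\kappa_P)$; your supergradient manipulation ``at $y=2\kappa_P$'' is engineered to reproduce the constant in the theorem statement rather than what the valuation computation actually yields, and you should resolve that discrepancy before building on it.
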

\begin{proof}[Sketch of proof]
Since most of the proof goes as in \cite[Section 5]{LL-arXiv-2021}, we just sketch it in the following. For ${\rm L^{NA}}(\cdot)$, choose the approximating sequence of $\mathbb Z$-test configurations $\{\mathcal F_p\}_{p\in\mathbb N_+}$
of $\mathcal F$ defined in Section 6.2, by \cite[Remark 2.29]{Han-Li},
\begin{align*}
{\rm L^{NA}}(\mathcal F)\geq\lim_{p\to+\infty}{\rm L^{NA}}(\mathcal F_p),
\end{align*}
and the equality holds if $\mathcal F$ is defined by a valuation on $G/H$. To compute ${\rm L^{NA}}(\mathcal F_p)$, we use the lct formula as in \cite[Section 4.3]{Yao}.
Using the canonical $B$-semiinvariant section $\mathfrak d_0$ of $-K_X$ with $B$-weight $\kappa_P$, as in \cite[Lemma 5.3]{LL-arXiv-2021} we gets
\begin{align}\label{L-Fp}
{\rm L^{NA}}(\mathcal F_p)=f_p(\kappa_P).
\end{align}
Combining with \eqref{fp-con},
\begin{align}\label{L-reduction-ineq}
{\rm L^{NA}}(\mathcal F)\geq f(\kappa_P),
\end{align}
and the equality holds if $\mathcal F$ is defined by a valuation on $G/H$.

For ${\rm S^{NA}}(\cdot)$, we need to find the Okounkov bodies of $\mathcal F^{(t)}=\{\mathcal F^{kt}R_k\}_{k\in\mathbb N}$. By definition
$$\mathcal F^{kt}R_k=\oplus_{\lambda\in \Delta_{+,k},s^{(k)}_\lambda\geq kt}V_\lambda.$$
Let $\{e_1,...,e_{r_\mathcal F}\}$ be a set of $\mathbb Z$-basis of $\Gamma(\mathcal F)$ and $\delta_0:=\max_{i=1,...,r_\mathcal F}|e_i|$.
Recall \eqref{s-k-G-final},
$$kf(\frac\lambda k)-\delta_0\leq s^{(k)}_\lambda\leq kf(\frac\lambda k),~\lambda\in \Delta_{+,k}.$$
Thus the Okounkov bodies
\begin{align}
&{\rm Conv}(\cup_{\lambda\in \Delta_{+,k};f(\lambda/k)\geq t-\frac{\delta_0}k}(\lambda,\Delta^{\rm GT}(\lambda)))\notag\\
\subset&
\Delta^{\rm O}(\mathcal F^{kt}R_{k})
\subset{\rm Conv}\left(\cup_{\lambda\in \Delta_{+,k};f(\lambda/k)\geq t}(\lambda,\Delta^{\rm GT}(\lambda))\right).
\end{align}
We get
\begin{align}\label{okounkov-body-serise}
\Delta^{\rm O}(\mathcal F^{(t)})=\overline{{\rm Conv}\left(\cup_{k=1}^{+\infty}\frac1k\Delta^{\rm O}(\mathcal F^{kt}R_{k})\right)}=\Delta^{\rm O}\cap\{f(\lambda)\geq t\}=:\Delta^{\rm O}_{f\geq t},
\end{align}
and consequently
\begin{align}\label{G-F-Lambda}
G_{\mathcal F}(z)=\sup\{t|z\in\Delta^{\rm O}_{f\geq t}\}=f(\lambda),~\text{for}~z=(\lambda,z')\in\{\lambda\}\times\Delta^{\rm GT}(\lambda)\subset\Delta^{\rm O}.
\end{align}

On the other hand, by \cite[Theorem 2.5]{Han-Li},
\begin{align}\label{dirac-measure-converge}
dz=\lim_{k\to+\infty}\nu_k,
\end{align}
where $\mu_k$ is the Dirac type measure
\begin{align}\label{dirac-measure}
\nu_k:=\frac{1}{k^n}\sum_{z\in k\Delta^{\rm O}~\text{is an integral point}~}\delta_{\frac zk}.
\end{align}
Also, recall the Weyl character formula \cite[Section 3.4.4]{Zhelobenko-Shtern},
\begin{align}\label{dim-V-lambda}
\dim(V_\lambda)=\frac{\prod_{\alpha\in\Phi^G_+,\alpha\not\perp\Delta_+}\langle\alpha,\rho+k\lambda\rangle}{\prod_{\alpha\in\Phi^G_+,\alpha\not\perp\Delta_+}\langle\alpha,\rho\rangle},~\forall~\text{dominant $G$-weight}~\lambda,
\end{align}
where $\rho=\frac12\sum_{\alpha\in\Phi_+^G}\alpha$.

Plugging \eqref{G-F-Lambda}, \eqref{dirac-measure-converge}-\eqref{dim-V-lambda} and \eqref{okounkov-body-fibre} into \eqref{S-inv-NA} and using the fibration structure of $\Delta^{\rm O}$ over $\Delta_+$, one gets (cf. \cite[Section 1.4]{Pukhlikov-Khovanskii} and \cite[Lemma 5.2]{LL-arXiv-2021})
\begin{align}\label{S-inv-F}
{\rm S^{NA}}(\mathcal F)=-\ln\frac1V\int_{\Delta_+}e^{-f(y)}\pi\,dy+\ln{\prod_{\alpha\in\Phi^G_+,\alpha\not\perp\Delta_+}\langle\alpha,\rho\rangle}.
\end{align}

Theorem \ref{H-inv-F-thm} then follows from \eqref{L-reduction-ineq} and \eqref{S-inv-F}.
\end{proof}

From Theorem \ref{H-inv-F-thm} and concavity of $f$ one directly sees
$${\rm H^{NA}}(\mathcal F)\geq\int_{\Delta_+}e^{-\Lambda(y-\kappa_P)}\pi(y) dy={\rm H^{NA}}(\mathcal F_\Lambda)$$
whenever $\Lambda$ is the slope of a supporting function of $f$ at $\kappa_P$. To find the minimizer we further minimize ${\rm H^{NA}}(\mathcal F_\Lambda)$
for $\Lambda\in\mathcal V_\mathbb R(G/H)$. Since ${\rm H^{NA}}(\mathcal F_\Lambda)$ is both convex and proper with respect to $\Lambda$ it admits a unique minimizer $\Lambda_0$ and $\mathcal F_{\Lambda_0}$ is the semistable degeneration of $X$.

As in \cite[Lemma 5.4]{LL-arXiv-2021}, one can prove
\begin{align}\label{bar-Lambda0}
\mathbf{b}(\Lambda_0):=\frac{\int_{\Delta_+}y_ie^{-\Lambda_0(y)}\pi dy}{\int_{\Delta_+}e^{-\Lambda_0(y)}\pi dy}\in\kappa_P+\overline{(-\mathcal V_\mathbb R(G/H_0)^\vee)}.
\end{align}

Recall the combinatory data of $\mathcal X_0$ studied in Section 5, implies $(\mathcal X_0,\Lambda_0)$ is modified K-semistable. As in \cite[Proposition 5.5]{LL-arXiv-2021} we have
\begin{prop}\label{limit-KRF}
If \eqref{bar-Lambda0} is strict, i.e.
$$\mathbf{b}(\Lambda_0)\in\kappa_P+{\rm RelInt}({-\mathcal V(G/H_0)^\vee}),$$
then $\mathcal X_0$ is the limiting space of the normalized K\"ahler-Ricci flow on $X$. Moreover, $\mathcal X_0$ admits a K\"ahler-Ricci soliton with soliton vector field $-\Lambda_0$.
\end{prop}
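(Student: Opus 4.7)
The plan is to carry through the strategy of \cite[Proposition 5.5]{LL-arXiv-2021} in the general spherical setting, exploiting the combinatorial description of $\mathcal X_0$ from Proposition \ref{homo-sph-datum-G/H0}. By Theorem \ref{H-inv-F-thm} and the preceding minimization argument, $\mathcal F_{\Lambda_0}$ is the H-minimizer among all $G$-equivariant normal $\mathbb R$-test configurations of $X$, and by \cite[Proposition 2.9]{LL-arXiv-2021} H-minimization can be reduced to the $G$-equivariant class. Consequently $(\mathcal X_0,-\Lambda_0)$ is modified K-semistable. Invoking the Hamilton-Tian conjecture \cite{Chen-Wang-Sun} together with the identification of the K\"ahler-Ricci flow limit as the central fibre of the H-minimizer \cite{Han-Li,Blum-Liu-Xu-Zhuang}, it suffices to upgrade modified K-semistability to modified K-polystability of $(\mathcal X_0,-\Lambda_0)$; this will simultaneously yield existence of a K\"ahler-Ricci soliton with soliton vector field $-\Lambda_0$ and identify $\mathcal X_0$ with the normalized K\"ahler-Ricci flow limit on $X$.

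The upgrade is carried out by re-running Theorem \ref{H-inv-F-thm} inside $\mathcal X_0$. By Proposition \ref{homo-sph-datum-G/H0}, $\mathcal X_0$ is a $\mathbb Q$-Fano spherical embedding of $G/H_0$ with the same moment polytope $\Delta_+$ and the same weight $\kappa_P$ as $X$, but with the enlarged valuation cone $\mathcal V(G/H_0)\supset \mathcal V(G/H)$. For any $\Lambda\in\mathcal V(G/H_0)$, Corollary \ref{R-TC-Lambda} gives a $G$-equivariant special $\mathbb R$-test configuration $\mathcal F_\Lambda$ of $\mathcal X_0$, and the formula \eqref{H-reduction-eq} (adapted to the $\Lambda_0$-weighted measure $e^{-\Lambda_0(y)}\pi(y)\,dy$ on $\Delta_+$) expresses the modified non-Archimedean H-functional of $(\mathcal X_0,-\Lambda_0)$ along $\mathcal F_\Lambda$ as $f_\Lambda(\kappa_P)$ plus the logarithm of the ratio of $(\Lambda+\Lambda_0)$- and $\Lambda_0$-weighted normalizations. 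This functional is smooth, strictly convex, and proper on $\mathcal V(G/H_0)$, so it admits a unique minimizer.

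The critical step is the first-variation computation at $\Lambda=0$: a direct differentiation yields that the directional derivative along $\Lambda'\in\mathcal V(G/H_0)$ equals $\langle \Lambda', \kappa_P-\mathbf b(\Lambda_0)\rangle$. The strictness hypothesis $\mathbf b(\Lambda_0)\in\kappa_P+{\rm RelInt}(-\mathcal V(G/H_0)^\vee)$ is precisely the statement that $\langle\Lambda',\mathbf b(\Lambda_0)-\kappa_P\rangle<0$ for every nonzero $\Lambda'\in\mathcal V(G/H_0)$, so the first variation is strictly positive in every admissible direction. Combined with strict convexity, this forces $\Lambda=0$ to be the unique minimizer, hence no nontrivial $G$-equivariant special $\mathbb R$-test configuration of $\mathcal X_0$ decreases the modified H-invariant. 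Therefore $(\mathcal X_0,-\Lambda_0)$ is modified K-polystable within the $G$-equivariant class, which by \cite[Proposition 2.9]{LL-arXiv-2021} suffices for full modified K-polystability.

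The main obstacle is to set up the weighted non-Archimedean H-functional on $\mathcal X_0$ so that Theorem \ref{H-inv-F-thm} applies verbatim, and to verify that the duality $\mathbf b(\Lambda_0)-\kappa_P\in{\rm RelInt}(-\mathcal V(G/H_0)^\vee)$ is equivalent to strict positivity of the first-order variation on the full cone $\mathcal V(G/H_0)$ rather than only on $\mathcal V(G/H)$; this requires the precise identification $\Pi_{G/H_0}=\Pi_{G/H}\cap\Lambda_0^\perp$ from Proposition \ref{homo-sph-datum-G/H0}. Once this is in place, the existence of the K\"ahler-Ricci soliton on $\mathcal X_0$ with soliton vector field $-\Lambda_0$ and its identification with the normalized K\"ahler-Ricci flow limit on $X$ follow from the Yau-Tian-Donaldson-type correspondence for solitons on spherical $\mathbb Q$-Fano varieties (cf. the references in \cite{Li-Wang}) together with the Hamilton-Tian convergence cited above.
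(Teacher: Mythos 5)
Your overall strategy---reduce to a Delcroix-type barycenter criterion on $\mathcal X_0$ and combine it with the Hamilton--Tian picture of \cite{Chen-Wang-Sun,Han-Li,Blum-Liu-Xu-Zhuang}---is the same route the paper takes by citing \cite[Proposition 5.5]{LL-arXiv-2021}, and your first-variation computation $\langle\Lambda',\kappa_P-\mathbf b(\Lambda_0)\rangle$ is indeed (up to a positive constant) the modified Futaki invariant of $\mathcal F_{\Lambda'}$ on $(\mathcal X_0,-\Lambda_0)$. But one key step is stated incorrectly. The strictness hypothesis does \emph{not} give $\langle\Lambda',\mathbf b(\Lambda_0)-\kappa_P\rangle<0$ for every nonzero $\Lambda'\in\mathcal V(G/H_0)$: the cone $(-\mathcal V(G/H_0))^\vee$ is spanned by $\Pi_{G/H_0}$, so membership of $\mathbf b(\Lambda_0)-\kappa_P$ in its relative interior only yields $\langle\Lambda',\mathbf b(\Lambda_0)-\kappa_P\rangle\le 0$, with equality exactly when $\Lambda'\perp\Pi_{G/H_0}$, i.e.\ on the lineality space of $\mathcal V(G/H_0)$. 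That space is nontrivial in precisely the situations the proposition is about (and is everything when $\mathcal X_0$ is horospherical, where your claimed strict inequality fails identically). The repair is to note, via Proposition \ref{homo-sph-datum-G/H0} and Corollary \ref{finiteness-X0} applied to $G/H_0$, that $\Lambda'\perp\Pi_{G/H_0}$ holds exactly when $\mathcal F_{\Lambda'}$ has central fibre $G$-isomorphic to $\mathcal X_0$, i.e.\ is a product configuration; so what the hypothesis actually delivers is ``modified Futaki $>0$ unless product,'' which is the polystability dichotomy you want.

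Relatedly, the inference ``no nontrivial special test configuration decreases the modified H-invariant, therefore modified K-polystable'' conflates two different notions: triviality of the H-minimizer is modified K-\emph{semi}stability, and strict convexity of the weighted H-functional does not by itself exclude non-product configurations with vanishing modified Futaki invariant. The statement you should extract from your own computation is positivity of the modified Futaki invariant on all non-product $G$-equivariant special test configurations; combined with the equivariant reduction \cite{Li-Wang-Xu,Zhuang-2021} and the Yau--Tian--Donaldson theorem for solitons (cf.\ \cite{Han-Li-KRS}), this yields the K\"ahler--Ricci soliton on $\mathcal X_0$ with vector field $-\Lambda_0$, and then Hamilton--Tian convergence together with uniqueness of the semistable degeneration identifies $\mathcal X_0$ as the flow limit. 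With these two repairs your argument closes and agrees with the proof the paper intends by its citation of \cite[Proposition 5.5]{LL-arXiv-2021}.
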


For more details on semistable degenerations and the limiting problem of K\"ahler- Ricci flow, we refer to the readers \cite{Li-Wang}.

\begin{rem}
It is showed in \cite{Blum-Liu-Xu-Zhuang, Han-Li} that the minima of ${\rm H^{NA}}(\cdot)$ must be achieved by an $\mathbb R$-test configuration induced by a quasimonomial  $\mathbb R$-valuation on $X$. Combining with \cite[Corollary 2.10]{LL-arXiv-2021} and Remark \ref{R-val}, it suffices to minimize ${\rm H}^{\rm NA}(\cdot)$ among $\{\mathcal F_\Lambda|\Lambda\in\mathcal V_\mathbb R\}$. Therefore we get an alternative proof of Theorem \ref{main-thm-4} without using the full classification result Theorem \ref{G-classify}.
\end{rem}

Finally, we have
\begin{proof}[Proof of Theorem \ref{main-thm-6}]
Recall that the limiting space $X_\infty$ can be derived from $X$ via two-steps degeneration (cf. \cite{Blum-Liu-Xu-Zhuang, Chen-Wang-Sun,Han-Li}). More precisely, the semistable degeneration $\mathcal F_{\Lambda_0}$ of $(X,K^{-1}_X)$ in Theorem \ref{H-inv-F-thm}, that degenerates $X$ to its semistable limit $\mathcal X_0$. We see that $\mathcal X_0$ is a $\mathbb Q$-Fano $G$-spherical embedding of some spherical homogenous space $G/H_0$. Then there is a $G$-equivariant special $\mathbb R$-test configuration $\mathcal F'=\mathcal F'_{\Lambda'_0}$ of $(\mathcal X_0,K^{-1}_{\mathcal X_0})$ with $\Lambda'\in\mathcal V_\mathbb R(G/H_0)$ that degenerates $\mathcal X_0$ to $X_\infty$. Again, $X_\infty$ is a $\mathbb Q$-Fano $G$-spherical embedding of some spherical homogenous space $G/H'_0$, and $X_\infty$ admits a K\"ahler-Ricci soliton. By Proposition \ref{val-cone-G/H0-prop},
$$\Pi_{G/H'_0}\subset\Pi_{G/H_0}\subset\Pi_{G/H}.$$
If $X_\infty$ is also a spherical embedding of $G/H$, then the above inclusions are both equalities. In particular, by Proposition \ref{val-cone-G/H0-prop} we see that for the semistable degeneration $\mathcal F_{\Lambda_0}$, $\Lambda_0$ is orthogonal to $\Pi_{G/H}$. Then by Theorem \ref{finiteness-X0}, $\mathcal X_0$ is isomorphic to $X$. Applying the above arguments to $\mathcal F'_{\Lambda'_0}$, we see that $\Lambda'_0$ is also orthogonal to $\Pi_{G/H}$, and $X_\infty$ is isomorphic to $X$. A contradiction to our assumption that $X$ does not admit any K\"ahler-Ricci soliton.

\end{proof}



\section{Appendix: A Lemma on convex cones}
\begin{lem}\label{cone-lem}
Let $C_1,C_2$ be two finitely generated (closed) cone in $V=\mathbb R^n$. For each $\Lambda\in C_2$, denote by $C_1(\Lambda)$ the cone generated by $C_1$ and $\Lambda$. Suppose that $\Lambda$ lies in the relative interior of a face $F$ of $C_2$. If the relative interior ${\rm RelInt}(C_1(\Lambda))$ of $C_1(\Lambda)$ does not intersects $C_2$. Then for any $\Lambda'\in\bar F$,  ${\rm RelInt}(C_1(\Lambda'))\cap C_2=\emptyset$.
\end{lem}

\begin{proof}
We first show that there is a $u\in V^*$ so that the hyperplane
$\Pi_u:=\langle u\rangle^\perp$ separates $C_1(\Lambda)$ and $C_2$ so that
$$C_1(\Lambda)\subset\Pi_u^{\geq0}:=\{v\in V|u(v)\geq0\},~C_2\subset\Pi_u^{\leq0}:=\{v\in V|u(v)\leq0\}.$$
It suffice to show that the intersection of dual cones $C_1(\Lambda)^\vee\cap (-C_2^\vee)\not=\{O\}$. Otherwise, taking dual on both sides we see that $C_1(\Lambda)$ and $-C_2$ generates $V$. Choose a $v'\in{\rm RelInt}(C_1(\Lambda))$, there are $v_1\in C_1(\Lambda)$ and $v_2\in C_2$ so that $-v'=v_1-v_2$. Then $v_2=v_1+v'\in{\rm RelInt}(C_1(\Lambda))$, a contradiction. Hence there is a $(0\not=)u\in C_1(\Lambda)^\vee\cap (-C_2^\vee)$ that satisfies our requirements. Clearly, $\Lambda\in\Pi_u$, and consequently $F\subset\Pi_u$.

The proof then depends on $\dim C_1(\Lambda)$:

\emph{Case-1.} $\dim C_1(\Lambda)=n$. In this case $C_1$ intersects $\Pi_u^+:=\{v\in V|u(v)>0\}$. Otherwise, $C_1(\Lambda)\subset\Pi_u$, which contradicts with $\dim C_1(\Lambda)=n$. Hence for any $\Lambda'\in \bar F\subset\Pi_u$, ${\rm RelInt}(C_1(\Lambda'))\subset\Pi_u^+$, which does not intersects $C_2$.

\emph{Case-2.} $\dim C_1(\Lambda)=k<n$. There are two subcases:

\emph{Csee-2.1.} $C_1$ intersects $\Pi_u^+:=\{v\in V|u(v)>0\}$, then for any $\Lambda'\in \bar F\subset\Pi_u$, ${\rm RelInt}(C_1(\Lambda'))\subset\Pi_u^+$ and hence does not intersects $C_2$.

\emph{Csee-2.2.} $C_1\subset\Pi_u$. Then for any $\Lambda'\in\bar F\subset\Pi_u$, $C_1(\Lambda')\subset\Pi_u$. We consider the cone $C_2\cap\Pi_u$ in stead of $C_2$. Then $C_1(\Lambda)$ and $C_2\cap\Pi_u$ are two finitely generated cone in $\Pi_u\cong\mathbb R^{n-1}$. Consider a hyperplane $\Pi_{u'}$ of $\Pi_u$ that separates $C_1(\Lambda)$ and $C_2\cap\Pi_u$. Again we have $\bar F\subset\Pi_{u'}$. If $C_1\cap\Pi_{u'}^+\not=\emptyset$, then we go back to \emph{Case-2.1}. Otherwise we repeat the above progress. The progress continues for at most $(n-k+1)$ times, since at the $(n-k+1)$-th time we will get a $k$-dimensional plane $\Pi$ that contains $C_1(\Lambda)$, $\bar F$ and $C_2\cap\Pi$, which reduces to \emph{Case-1}.

\end{proof}

\subsection*{Author Declaration} The authors declare no conflict of interest. Also, the manuscript has no associated data.

\vskip30pt

\end{document}